\journal{Journal of Computational Physics}
\newtheorem{theorem}{Theorem}
\newtheorem{definition}{Definition}
{\theoremstyle{remark} \newtheorem{remark}{Remark}}
\numberwithin{figure}{subsection}
\numberwithin{equation}{section}
  \def\({}%
  \def\){}%
  \def\\{}%
  \def\infty{\042\036}%
\newcommand\showfigures[1]{} 
\newcommand \be { \begin{equation}}
\newcommand \ee { \end{equation}}
\newcommand \bel {\begin{equation}\label}
\newcommand \del \partial 
\newcommand{\tb}[1]{ \boldsymbol{#1}}
\newcommand \red {\color{black}}
\newcommand \blue {\color{black}}
\newcommand \green {\color{black}}
\newcommand{\reva}[1]{{\color{black} #1}}
\newcommand{\revb}[1]{{\color{black} #1}}
\newcommand{\revc}[1]{{\color{black} #1}}
\begin{document}

\begin{frontmatter}

\title{High-order fully well-balanced numerical methods for one-dimensional blood flow with discontinuous properties{\blue, friction and gravity}}

\author[uni1]{Ernesto Pimentel-Garc\'ia}
\author[uni2]{Lucas O. Müller}
\author[uni3]{Carlos Par\'es}

\address[uni1]{Dep. Matemática Aplicada. Universidad de Málaga, CP 29010, Málaga, Spain}
\address[uni2]{Department of Mathematics, University of Trento, Trento, Italy}
\address[uni3]{Dep. Análisis Matemático, Estadística e I.O. y Matemática Aplicada. Facultad de Ciencias, Universidad de Málaga, CP 29010 Málaga, Spain}


\begin{abstract}
{\blue

We present well-balanced, high-order, semi-discrete numerical schemes for one-dimensional blood flow models with discontinuous mechanical properties and algebraic source terms representing friction and gravity. While discontinuities in model parameters are handled using the Generalized Hydrostatic Reconstruction, the presence of algebraic source terms implies that steady state solutions cannot always be computed analytically. In fact, steady states are defined by an ordinary differential equation that needs to be integrated numerically. Therefore, we resort on a numerical reconstruction operator to identify and, where appropriate, preserve steady states with an accuracy that depends on the reconstruction operator's numerical scheme. We extend our methods to deal with networks of vessels and show numerical results for single- and multiple-vessel tests, including a network of 118 vessels, demonstrating the capacity of the presented methods to outperform naive discretizations of the equations under study. 

}
\end{abstract}

\begin{keyword}
blood flow, well-balanced, friction, gravity, finite volume, networks
\end{keyword}

\end{frontmatter}

\nolinenumbers

\section{Introduction} 
\label{section--1}

{\blue

Since first put forward by Euler in 1744 \cite{Euler:1744}, one-dimensional (1D) blood flow models have been the subject of a large number of theoretical, numerical and application studies. For a comprehensive overview of works treating all these aspects the reader is referred to the recent review proposed in \cite{BlancoMuller:2025}. 

In this paper we deal with the development of numerical schemes with well-balancing properties. A scheme is said to be well-balanced if it captures exactly, or with a certain order of accuracy, certain steady state solutions of the differential model that it discretizes. While the literature on well-balanced schemes for 1D blood flow models is rich for cases in which geometrical and mechanical properties vary over space, even discontinuously \cite{delestre2013well,
ghigoLowShapiroHydrostaticReconstruction2017,
ghitti2020fully,
murillo2015roe,
liWellbalancedDiscontinuousGalerkin2018,
brittonWellbalancedDiscontinuousGalerkin2020,
spilimbergo2021one,
mullerWellbalancedHighorderSolver2013,
spilimbergo2021one,
muller2013well}, 
less was done for the case in which these models include algebraic source terms representing friction and gravity. Two exceptions are  \cite{ChuKurganov:2023} and \cite{murillo2019formulation}. The first work regarded well-balanced schemes for 1D blood flow with friction and constant mechanical properties, while the second one dealt with the construction of well-balanced schemes that preserve zero-velocity steady states for problems including gravity. In general, when friction or gravity are present, exact solutions can be computed analytically under very specific conditions, while in most general cases such solutions have to be computed numerically, using ordinary differential equations integrators. 

In previous works we proposed exactly well-balanced schemes for 1D blood flow with discontinuous geometrical and mechanical properties for the case where friction and gravity were not present \cite{muller2013well,pimentelgarcia2023high}. In these works we relied on the Generalized Hydrostatic Reconstruction (GHR) \cite{castro2007generalized}. Here, we still make use of the GHR to deal with discontinuous properties of vessels, while we adopt the methodology proposed in \cite{gomez2021collocation} for general steady state solutions, constructed by numerically integrating the Cauchy problem that defines such steady states.  Combining these two tools, we are able to construct high-order well-balanced semi-discrete path-conservative schemes for 1D blood flow in networks of vessels.

A novel aspect of this work is the extension of the proposed methodology to deal with boundary and coupling conditions needed to perform computational haemodynamic simulations. In fact, we describe how to couple multiple vessels at junction/bifurcation nodes, how to couple 1D vessels to lumped-parameter models used to describe peripheral circulation and how to prescribe boundary conditions. These features allow us to show the relevance of well-balanced schemes for the verification of a correct implementation of the effect of gravity in complex vascular network models, and to demonstrate the capacity of the proposed methods to deliver accurate results for realistic blood flow simulation setups that include friction and gravity.

The structure of the paper is as follows. In Section \ref{section--2} we present the mathematical model and define relevant quantities for the construction of well-balanced schemes. Next, Section \ref{General methodology} regards the design of the proposed numerical methods. Section \ref{section--networks} follows with the description of how our methods can be applied in real computational haemodynamic applications. Results are illustrated and discussed in Section \ref{section--results} and conclusions are drawn in Section \ref{section--conclusions}.

}

\section{Mathematical model} 
\label{section--2}

We consider the following model for 1D blood flow in thin-walled deformable elastic tubes introduced in \cite{toro2013flow}:
\begin{equation}\label{eq:model}
    \left\{
\begin{array}{l}
\displaystyle \partial_t A + \partial_x q = 0,\\
\displaystyle \partial_t q + \partial_x \left( \frac{q^2}{A} \right) + \frac{A}{\rho} \partial_x p = -\frac{f}{\rho} + {\blue g A},
\end{array}\right.
\end{equation}
with
$$
p(x,t) = K(x) \phi\left( \frac{A(x,t)}{A_0(x)}\right) + p_e(x), \quad f(x,t)=\gamma \pi \mu \frac{q}{A},
$$
where $\phi$ is the function defined by
\begin{equation}
    \phi(a)  =  a^m - a^n.
\end{equation}
 The notation is as follows:
\begin{itemize}
    \item $A(x,t)$ represents the cross-sectional area of the vessel.
    \item $q(x,t)$, the mass-flux and $u(x,t) = \dfrac{q(x,t)}{A(x,t)}$ is the averaged velocity of blood at a cross section.
    \item $\rho$, the fluid density, assumed to be constant.
    \item $A_0(x)$, the vessel cross-sectional area in an unloaded configuration.
    \item $K(x)$, the so-called stiffness coefficient, is a known positive function of the vessel wall Young modulus $E(x)$, the wall thickness $h_0(x)$ and $A_0(x)$.
    \revc{\item $p(x,t)$, the internal pressure.
    \item $p_{e}(x)$,  the external pressure acting on the vessel.}
    \item Here we assume $m>0$ and $n\in (-2,0]$. Typical values for collapsible tubes, such as veins, are $m=10$, $n=-3/2$. For arteries we have $m=1/2$, $n=0$. Moreover, the specified intervals for $m$ and $n$ guarantee the genuine nonlinearity of certain characteristic fields \cite{toro2013flow}, as explained later.
    \item $f(x,t)$ is the friction force per unit length of the tube, $\gamma$ depends on the velocity profile and $\mu$ is the fluid viscosity. Here we assume a linear profile of the velocity corresponding to $\gamma =8$. The fluid viscosity is fixed to $\mu = 0.0045$.
       \item  {\blue $g(x)$ is a smooth function representing the projection of the gravitational vector field along the direction of the $x$ axis. This parameter is allowed to vary smoothly along $x$ since vessel orientation can change significantly along this axis.}
\end{itemize}

Following \cite{pimentelgarcia2023high} we can rewrite \eqref{eq:model} as a system of balance laws in compact form: 
\begin{equation}\label{compform}
\partial_t \tb{U} + \partial_x \tb{F}(\tb{W}) + \tb{S}(\tb{W})\cdot\partial_x \tb{\sigma} = \tb{R}(\tb{W}) ,
\end{equation}
where
\begin{equation}\label{statesflux}
    \tb{U} = \left(\begin{array}{c}
         A  \\
         q 
         \end{array}\right), \quad  \tb{W} = \left(\begin{array}{c}
         A  \\
         q \\
         K \\
         A_0 \\
         p_e 
         \end{array}\right), \quad \tb{F}(\tb{W}) = \left(\begin{array}{c}
         q  \\
        \displaystyle \frac{q^2}{A} + \frac{KA_0}{\rho} \tilde \Phi\left(\frac{A}{A_0}\right)
         \end{array}\right),
\end{equation}
\begin{equation}
    \tb{S}(\tb{W}) = \left( \ \tb{S}_1(\tb{W}) \ \rvert \ \tb{S}_2(\tb{W}) \ \rvert \ \tb{S}_3(\tb{W}) \ \right), \quad \tb{\sigma} = \left( \begin{array}{c} K \\ A_0 \\ p_e \end{array} \right), \quad \tb{R}(\tb{W}) = \left( \begin{array}{c} 
    0 \\ 
    \displaystyle - \frac{\gamma \pi \mu}{\rho} \frac{q}{A} {\blue + g A}
    \end{array}\right),
\end{equation}
being
\begin{equation}\label{sources}
   \tb{S}_1(\tb{W})= \left(\begin{array}{c}
         0  \\
        \displaystyle \frac{A_0}{\rho}\Phi( a)
         \end{array}\right),
         \quad
     \tb{S}_2(\tb{W})= - \left(\begin{array}{c}
         0  \\
        \displaystyle \frac{K}{\rho} \tilde \Phi\left(a \right)
        \end{array}\right),        
        \quad
   \tb{S}_3(\tb{W})= \left(\begin{array}{c}
         0  \\
        \displaystyle  \frac{A}{\rho}  
         \end{array}\right),
\end{equation}
with
$$
\red{a = \frac{A}{A_0}}$$
and
\begin{eqnarray}
    \Phi(\revc{a}) & = & \int_{0}^\revc{a} \phi(\revc{\tau}) \,d\revc{\tau} = \left(\frac{1}{m+1}\revc{a}^{m+1} - \frac{1}{n+1}{\revc{a}}^{n+1} \right) , \\
        \tilde \Phi(\revc{a}) & = & \int_{0}^\revc{a} \revc{\tau} \phi'(\revc{\tau}) \,d\revc{\tau}  = \left(\frac{m}{m+1} \revc{a}^{m+1} - \frac{n}{n+1}\revc{a}^{n+1} \right).
\end{eqnarray}
Note that the following equality holds:
\begin{equation}\label{relacion}
 \Phi(\revc{a}) +  \tilde \Phi(\revc{a}) = \revc{a} \phi(\revc{a}). 
\end{equation}

We can rewrite $\partial_x \tb{F}(\tb{W})$ in terms of its Jacobian matrix:

\begin{equation}
    \partial_x \tb{F}(\tb{W}) = \textbf{JF}(\tb{W})\tb{W}_{x} = \tb{JF}_{\tb{U}}(\tb{W})\tb{U}_{x} + \tb{JF}_{\tb{\tb{\sigma}}}(\tb{W})\tb{\sigma}_{x},
\end{equation}
where
\begin{equation}
    \tb{JF}_{\tb{U}}(\tb{W}) = \begin{pmatrix}
0 & 1 \\
\displaystyle -u^2 + \frac{K}{\rho} a \partial_a \phi(a) & 2u
\end{pmatrix}, \quad
\tb{JF}_{\tb{\tb{\sigma}}}(\tb{W}) = \begin{pmatrix}
0 & 0 & 0\\
\displaystyle \frac{A_{0}}{\rho} \red{ \tilde \Phi(a)} & \displaystyle \frac{K}{\rho}(\tilde \Phi(a) - a^2 \partial_a \phi(a)) & 0
\end{pmatrix}.
\end{equation}
Therefore we can write the system in quasi-linear form:
\begin{equation}\label{eq:systemextended}
\partial_t \tb{W} + \tb{\tb{\mathcal{A}}}(\tb{W})\partial_x \tb{W} =  \widetilde{\tb{R}}(\tb{W}), 
\end{equation}
with:
\begin{equation}\label{eq:matrix_A}  \tb{\tb{\mathcal{A}}}(\tb{W})    = \left(  \begin{array}{ccccc} 
    0 & 1 & 0 & 0 & 0 \\
   \displaystyle -u^2 + \frac{K}{\rho} a \partial_a \phi(a)  &  2u &   \displaystyle \frac{A}{\revb{\rho}} \phi (a)&  
   \displaystyle -a^2\frac{K}{\rho}\partial_a\phi(a) & \displaystyle \frac{A}{\rho} \\
    0 & 0 & 0 & 0 & 0 \\
     0 & 0 & 0 & 0 & 0 \\
      0 & 0 & 0 & 0 & 0 
      \end{array} \right),
      \end{equation}
      \begin{equation}\label{eq:R}
\widetilde{\tb{R}}(\tb{W}) =  \left( \begin{array}{c} 0 \\ \displaystyle  - \frac{\gamma \pi \mu}{\rho} u {\blue + g A}\\  0 \\ 0 \\ 0 \end{array}   \right) . 
\end{equation}
The eigenvalues of $ \tb{\tb{\mathcal{A}}}(\tb{W})$ are:
\begin{equation}\label{eq:eigenvalues}
\lambda_1 = u - c, \quad \lambda_2 = \lambda_3 = \lambda_4 =0, \quad \lambda_5 = u + c,
\end{equation}
with
$$
c = \sqrt{ \frac{K}{\rho}a \partial_a \phi(a) }.
$$
The flow regime is said to be subcritical or subsonic if $|u| < c$, supercritical or supersonic if $|u| > c$, and critical or sonic if $|u| = c$.

\subsection{Weak solutions}
When the functions $K(x)$, $A_0(x)$ and/or $p_e(x)$ have jump discontinuities the source term  $\tb{S}(\tb{W})\cdot\partial_x \tb{\sigma} $ becomes a nonconservative product whose mathematical definition is ambiguous. Therefore, the admissible jump conditions satisfied by weak solutions have to be selected.  Following \cite{pimentelgarcia2023high} here we assume that the pairs of states 
$(\tb{W}_l, \tb{W}_r)$ that can be linked by an admissible contact discontinuity standing on a discontinuity point of  $\tb{\sigma}$ have to satisfy:
 \begin{equation}\label{dcadm}
 q_l = q_r, \quad \frac{\rho}{2}u_l^2 + K_l \phi\left(\frac{A_l}{A_{0,l}} \right) + p_{e,l}= 
  \frac{\rho}{2}u_r^2 + K_l \phi\left(\frac{A_r}{A_{0,r}} \right) + p_{e,r}.
 \end{equation}
 These jump conditions can be interpreted in terms of the choice of a family of paths within the framework of the DalMasso-LeFloch-Murat theory: see \cite{pimentelgarcia2023high} for details.

 \subsection{Nondimensional form}\label{subsec:nondiemensional} 
 
Due to the need of dealing with very small values for some variables, it is advisable to write system \eqref{eq:model} in non-dimensional form. The following non-dimensional variables are chosen here:
 $$
 x' = \frac{x}{\overline{L}}, \quad t' = \frac{t}{\overline{T}}, \quad A' = \frac{A}{\overline{A}}, \quad q'= \frac{q}{\overline{A}\, \overline{U}}, \quad {\red{u' = \frac{u}{\overline{U}}, \quad K' = \frac{K}{\overline K}, \quad A_0' = \frac{A_0}{\overline A}, \quad p_e' = \frac{p_e}{\rho \overline{U}^2},}} {\green{\quad g' = \frac{g}{\overline g}}}
 $$
 where $\overline{L}$, $\overline{T}$, $\overline{A}$, {\red{$\overline{K}$}}, {\green{$\overline{g}$}} are the characteristic length, time,  cross-sectional area, 
{ \red{stiffness coefficient}}, { \green{gravitational coefficient}} and $\overline{U} = \overline{L}/\overline{T}$. Then system \eqref{eq:model} can be written as:
 \begin{equation}\label{ndform}
 \left\{
 \begin{array}{l}
 \displaystyle \partial_{t'} A' + \partial_{x'} q' = 0, \\
 \displaystyle \partial_{t'} q' + \partial_{x'}\left( \frac{(q')^2}{A'} \right) + 
 \red{ \overline{S}_h^{-2} A' \partial_{x'} \left( K'\phi\left(\frac{A'}{A_0'} \right) \right)+ A'\partial_{x'} p_e' = - \overline{\mu} u'  } {\green{+ \overline{\beta}g'A'}}
 \end{array} 
 \right.
 \end{equation}
 where
 $$
 \overline{S}_h = \frac{\overline{U}}{\sqrt{{\overline{K}}/{\rho}}}, \quad {\red{\overline{\mu} = \frac{\gamma \pi \mu \overline{T}}{\rho\overline{A}}}}, \quad {\green{\overline{\beta} = \frac{\overline{g}\overline{T}}{\overline{U}}}},
 $$
 are dimensionless numbers.

 \subsection{Stationary solutions}\label{subsec:stationary_solutions}
 The stationary solutions of system \eqref{eq:systemextended} satisfy
\begin{equation}\label{stationary_solutions}
   \partial_x \tb{F}(\tb{W}) + \tb{S}(\tb{W})\cdot\partial_x \tb{\sigma}  = \tb{R}(\tb{W}),
\end{equation}
together with the jump conditions \eqref{dcadm} at the discontinuity points of $\tb{\sigma}$.
\eqref{stationary_solutions} is an ODE system that, if $\tb{JF}_{\tb{U}}(\tb{W})$ is regular, i.e. if not critical states are reached, can be rewritten as follows:
\begin{equation}\label{stationary_solutions_extended}
    \tb{U}_{x} = -\tb{JF}_{\tb{U}}(\tb{W})^{-1}((\tb{S}(\tb{W})+\tb{JF}_{\tb{\sigma}}(\tb{W}))\cdot \tb{\sigma}_x - \tb{R}(\tb{W})),
\end{equation}
or, equivalently:
\begin{eqnarray} \label{stationary_solutions_extended_q}
& & \partial_x q = 0, \\ \label{stationary_solutions_extended_A}
& & \partial_x A = -\frac{A \phi(a) \partial_x K - 
K  a^2 \partial_a \phi(a) \partial_x A_0 + A \partial_x p_e + \gamma \pi \mu u {\blue  - \rho g A} }{- \rho u^2 + K a\partial_a\phi(a)  },
\end{eqnarray} 
i.e. $q$ is constant and $A(x)$ solves the ODE \eqref{stationary_solutions_extended_A}. 
If there is no friction {{\green and no gravity}}, i.e. $\mu = 0$ {{\green and $g=0$}}, the solutions of \eqref{stationary_solutions} satisfy
\begin{equation} \label{ssnofriction} q = constant, \quad  \Gamma(\tb{W}) =  constant, \end{equation}
with
\begin{equation}\label{Gamma}  \Gamma(\tb{W}) = \frac{\rho}{2}u^2 + K \phi\left(\frac{A}{A_{0}} \right) + p_{e}  \end{equation}
(see \cite{pimentelgarcia2023high}) but when friction is present, there is not an easy closed form either explicit or implicit for the solutions of this ODE system.

\section{High-order fully well-balanced numerical methods}\label{General methodology}

For simplicity, we consider here uniform meshes composed by cells $I_i = [x_{i-1/2}, x_{i+1/2}]$ of length $\Delta x$ whose midpoints are represented by $x_i$. The mesh is assumed to be designed so that the discontinuity points of function $K, A_{0}$ or $p_{e}$ are placed at the interface between two computational cells.

Following \cite{pares2006numerical}, we consider semi-discrete finite-volume methods of the form:
\be\label{eq:semi_discrete}
\frac{d\tb{W}_{i}}{dt}= -\frac{1}{\Delta x}
\Big(
{\mathbb{D}}^{-}_{i+\frac{1}{2}}+ {\mathbb{D}}^{+}_{i-\frac{1}{2}} + \int_{x_{i-\frac{1}{2}}}^{x_{i+\frac{1}{2}}} \tb{\mathcal{A}}(\mathbb{P}^t_{i}(x))\frac{\partial}{\partial x}\mathbb{P}^t_{i}(x)dx - \int_{x_{i-\frac{1}{2}}}^{x_{i+\frac{1}{2}}} \widetilde{\tb{R}}(\mathbb{P}^t_{i}(x))\,dx
\Big),
\ee
where:
\begin{itemize}
	\item $\tb{W}_{i}(t) \cong \displaystyle \frac{1}{\Delta x}\int_{x_{i+\frac{1}{2}}}^{x_{i-\frac{1}{2}}} \tb{W}(x,t) \,dx$ is the approximation to the cell average of the solution at the $i$-th cell \reva{at} time $t$;
	\item $\mathbb{P}^t_{i}(x) = \mathbb{P}_i(x; \{ \tb{W}_j(t) \}_{j \in  \mathcal{S}_i})= \left[\begin{array}{c}
	     \tb{P}_{i}^{t}(x)  \\
	     \tb{\sigma}(x) 
	\end{array}\right]$ is a high-order reconstruction operator, i.e. an operator that gives a smooth high-order approximation of a function at the $i$-th cell using the values of its averages at the cells belonging to the stencil $\mathcal{S}_i$;
	\item ${\mathbb{D}}_{i+\frac{1}{2}}^{\pm} = {\mathbb{D}}^{\pm}\left(\tb{W}_{i+\frac{1}{2}}^{-} , \tb{W}_{i+\frac{1}{2}}^{+}\right) $, where:
	$$
	\tb{W}_{i+\frac{1}{2}}^{-} (t)= \mathbb{P}^t_{i}(x_{i+\frac{1}{2}}),\ \ \tb{W}_{i+\frac{1}{2}}^{+}(t) = \mathbb{P}^t_{i+1}(x_{i+\frac{1}{2}})
	$$
	are the reconstructions at the cell interface
	and 
 $$ {\mathbb{D}}^\pm(\tb{W}_{l}, \tb{W}_{r}) = \left(\begin{array}{c} {\tb{D}}^\pm(\tb{W}_{l}, \tb{W}_{r}) \\ 0 \end{array} \right)$$ are the fluctuations corresponding to a first-order path-conservative numerical method
satisfying:
	\begin{equation}\label{eq:cons}
	{\mathbb{D}}^{\pm}(\tb{W}, \tb{W}) = 0, \quad \forall \tb{W}
	\end{equation}
	and:
	\begin{eqnarray}\label{eq:D}
	    {\mathbb{D}}^{-}(\tb{W}_{l}, \tb{W}_{r}) + {\mathbb{D}}^{+}(\tb{W}_{l}, \tb{W}_{r}) & = & \int_{0}^{1} \tb{\mathcal{A}}(\tb{\Psi}(s))\frac{\partial \tb{\Psi}}{\partial s}(s) \,ds
	    \\ \nonumber
	    & = & \left[\begin{array}{c}
\displaystyle \tb{F}(\tb{W}_r) - \tb{F}(\tb{W}_l) 
+ \int_0^1 \tb{S}(\tb{\Psi}(s)) \cdot \frac{\partial \tb{\Psi_\sigma}}{\partial s} (s) \,ds \\
\\
0
\end{array}
\right], \quad \forall \tb{W}_l, \tb{W}_r,
	\end{eqnarray}
	where 
 $$
\tb{\Psi}(s; \tb{W}_l, \tb{W}_r)  = \left[ \begin{array}{c} \tb{\Psi}_{\tb{U}}(s; \tb{W}_l, \tb{W}_r) \\
\tb{\Psi}_{\tb{\sigma}}(s; \tb{W}_l, \tb{W}_r) \end{array}\right] = \left[ \begin{array}{c} \Psi_A(s; \tb{W}_l, \tb{W}_r) \\
\Psi_q(s; \tb{W}_l, \tb{W}_r) \\
\Psi_K(s; \tb{W}_l, \tb{W}_r) \\
\Psi_{A_{0}}(s; \tb{W}_l, \tb{W}_r) \\
\Psi_{p_{e}}(s; \tb{W}_l, \tb{W}_r) 
\end{array}\right]
, \quad s \in [0,1]$$
is a family of paths joining $\tb{W}_{l}$ with $\tb{W}_{r}$. 
\end{itemize}



If the trivial equations for the variables $K, A_{0}, p_{e}$ variables are dropped, the numerical method \eqref{eq:semi_discrete} can be written as follows:
\begin{eqnarray}\label{eq:sdmethod}
\frac{d\tb{U}_{i}}{dt}& = & -\frac{1}{\Delta x}
\Big(
{\tb{D}}^{-}_{i+\frac{1}{2}}+ {\tb{D}}^{+}_{i-\frac{1}{2}} + \tb{F}(\tb{W}_{i+1/2}^-) - \tb{F}(\tb{W}_{i-1/2}^+) \\
\nonumber &  & \qquad + \int_{x_{i-\frac{1}{2}}}^{x_{i+\frac{1}{2}}} \tb{S}({\mathbb{P}}_{i}(x)) \cdot \tb{\sigma}'(x)dx - \int_{x_{i-\frac{1}{2}}}^{x_{i+\frac{1}{2}}} \tb{R}({\mathbb{P}}_{i}(x)) dx
\Big).
\end{eqnarray}
In practice,  a quadrature formula 
$$
\int_{x_{i-1/2}}^{x_{i+1/2}} f(x) \,dx \approx  \Delta x \sum_{m=1}^{M} \beta_{m} f(x_{i}^{m}), $$
where $\beta_{m}$ and $x_{i}^{m}$, $m \in \{1,...,M\}$ are the weights and nodes respectively, is used to compute cell-averages and the integrals in \eqref{eq:sdmethod}.


\subsection{Well-balanced methods}\label{subsec:WB_methods}



In order to clearly state the well-balanced properties of the methods to be described, let us introduce some definitions:
\begin{definition}
    A sequence of cell values $\{ \tb{U}_i^* \}$ is said to be a discrete stationary solution if it is an equilibrium of the ODE 
    system \eqref{eq:sdmethod}, i.e. if for all $i$
    \begin{equation}\label{eq:discrete_stationary}
       {\tb{D}}^{-}_{i+\frac{1}{2}}+ {\tb{D}}^{+}_{i-\frac{1}{2}} + \tb{F}(\tb{W}_{i+1/2}^-) - \tb{F}(\tb{W}_{i-1/2}^+) 
 +  \Delta x \sum_{m=1}^{M} \beta_{m} \left[\tb{S}({\mathbb{P}}^t_{i}(x_{i}^{m})) \cdot \tb{\sigma}'(x_{i}^{m}) - \tb{R}({\mathbb{P}}^t_{i}(x_{i}^{m}))\right] = 0.
    \end{equation}
\end{definition}

\begin{definition}
    Given a stationary solution $\tb{U}^*$ of system \eqref{eq:model}, the numerical method \eqref{eq:sdmethod} is said to be exactly well-balanced (EWB) for $\tb{U}^*$ if the sequence $\{ \overline{\tb{U}}^*_i \}$ of its cell averages
    \begin{equation}\label{cell_av}
    \overline{\tb{U}}^*_i = \sum_{m=1}^{M} \beta_{m} \tb{U}^*(x_{i}^{m})
    \end{equation}
    is a discrete stationary solution. 
\end{definition}

\begin{definition}
    The numerical method \eqref{eq:sdmethod} is said to be fully exactly well-balanced (FEWB) if it is EWB for every stationary solution $\tb{U}^*$.
\end{definition}

In the absence of friction, the fact that the stationary solutions are implicitly given by \eqref{ssnofriction} allowed us to design FEWB methods in  
\cite{pimentelgarcia2023high}. Nevertheless, when friction is present, the expression of the stationary solutions is not known either in explicit or implicit form. Therefore, instead of EWB we will design here methods that are well-balanced according to the following definition:

\begin{definition}
    Given a stationary solution $\tb{U}^*$ of system \eqref{eq:model}, the numerical method \eqref{eq:sdmethod} is said to be  well-balanced (WB) for $\tb{U}^*$ if, for every $\Delta x$, it is possible to find a discrete stationary solution $\{ \widetilde{\tb{U}}^*_i \}$
    such that
    $$ 
     \overline{\tb{U}}^*_i  = \widetilde{\tb{U}}^*_i + O(\Delta x^q)
    $$
    where  $\overline{\tb{U}}^*_i$  is given by \eqref{cell_av} and $q$ is greater or equal than the order of the method.
\end{definition}

\begin{definition}
    The numerical method \eqref{eq:sdmethod} is said to be fully well-balanced (FWB) if it is WB for every stationary solution $\tb{U}^*$.
\end{definition}

Following \cite{gomez2021collocation}, in order to design FWB schemes, a numerical solver with order of accuracy $q$ greater or equal to the one of the numerical method will be used to solve the following problem:

\begin{itemize}

\item[(P)] \textit{Given an index $i_0$ and an arbitrary state $\tb{U}_{i_0}$, find numerical approximations at the interfaces and the quadrature points 
$$\tb{U}^{*, \pm}_{i + 1/2} \approx \tb{U}^{*}(x_{i+1/2}^{\pm}), \quad \tb{U}^{*,m}_{i} \approx \tb{U}^{*}(x_{i}^{m}), \ \ m \in \{1,...,M\}, \ \ \forall i,$$
of the stationary solution $\tb{U}^*$ of \eqref{eq:model} that satisfies
\begin{equation}\label{localproblem}
\sum_{m=1}^M \beta_m \tb{U}^*(x_{i_0}^m) = {\tb U}_{i_0},
\end{equation}
if it exists. }

\end{itemize}

Once the numerical solver has been applied, the cell-averages of the stationary solution satisfying \eqref{localproblem} will be approximated by
   \begin{equation}\label{num_aver}
    \widetilde{\tb U}^*_i = \sum_{m = 1}^M \beta_m \tb{U}^{*,m}_{i}. 
    \end{equation}
    The following notation will be used
$$\tb{W}^{*, \pm}_{i + 1/2} = \left( \begin{array}{c}\tb{U}^{*, \pm}_{i+1/2} \\ \tb{\sigma}(x_{i+1/2}^\pm) \end{array}\right),  \quad \tb{W}^{*,m}_{i} = \left( \begin{array}{c}\tb{U}^{*,m}_{i}  \\ \tb{\sigma}(x_i^m) \end{array}\right) \  m \in \{1,...,M\}, \quad \widetilde{\tb{W}}^{*}_{i} = \left( \begin{array}{c}\widetilde{\tb{U}}^{*}_{i} \\ \overline{\tb{\sigma}}_i \end{array}\right),
$$
where
$$
\overline{\tb{\sigma}}_i  = \sum_{m=1}^M \beta_m \tb{\sigma}(x_{i_0}^m).
$$

Two requirements are asked to the numerical solver of problem (P):  
\begin{itemize}
    \item[(R1)] Once  $\tb{U}^{*, \pm}_{i+1/2}$, $\tb{U}^{*,m}_{i}$, $ m =1,...,M$, $\widetilde{\tb U}^*_i$ have been computed,
    if the numerical solver is applied again to find approximations of the stationary solution satisfying
    $$
\sum_{m=1}^M \beta_m \tb{U}^*(x_{j_0}^m) =   \widetilde {\tb U}^*_{j_0} ,
$$
where $j_0$ is an arbitrary index, then the provided approximations are  again $\tb{U}^{*, \pm}_{i+1/2}$, $\tb{U}^{*,m}_{i}$, $ m =1,...,M$, $\widetilde{\tb U}^*_i$ for all $i$.

\item[(R2)] If $\tb{\sigma}$ is continuous at $x_{i + 1/2}$ then $\tb{W}^{*, +}_{i+ 1/2}  = \tb{W}^{*, -}_{i + 1/2} $.
Otherwise the pair of states  $ (\tb{W}^{*, -}_{i+ 1/2}  , \tb{W}^{*, +}_{i + 1/2})$ 
satisfy \eqref{dcadm}, i.e. the states can be linked by an admissible contact discontinuity.

\end{itemize}

A numerical solver of Problem (P) satisfying (R1), (R2) is described below in Section \ref{ss:numsol}. Once equipped with such a numerical solver, the design of FWB numerical methods will be based on the choice of path-conservative fluctuations and reconstruction operators that are well-balanced in the following sense: 
 \begin{definition} \label{wbpc} The path-conservative fluctuations $\tb{D}^\pm$ are said to be well-balanced for system \eqref{eq:model} if
$$
\tb{D}^\pm(\tb{W}_l, \tb{W}_r) = 0
$$
for $\tb{W}_l$ and $\tb{W}_r$ such that 
\begin{equation}\label{wb1o}
q_l = q_r, \quad \revc{\Gamma(\tb{W}_l) = \Gamma(\tb{W}_r)}.
\end{equation}
\end{definition}

\begin{definition} \label{WBrecop}
Given a numerical solver for problem (P), the reconstruction operator $\{ \tb{P}_{i}(x) \}$ is said to be well-balanced if, when applied to the sequence of cell values $\{ \widetilde{\tb U}^*_i \}$ computed by 
\eqref{num_aver} from the approximations provided by the numerical solver, one has
$$
\tb{P}_{i}(x_{i-1/2}) = \tb{U}^{*,+}_{i-1/2}, \quad \tb{P}_{i}(x_{i+1/2}) = \tb{U}^{*,-}_{i+1/2}, \quad \tb{P}_{i}(x_{i}^m) = \tb{U}^{*,m}_{i}, \ \ m=1,...,M, \quad \forall i. 
$$
 \end{definition}

 Concerning the WB path-conservative fluctuations, the Generalized Hydrostatic Reconstruction (GHDR) technique is considered here (see \cite{pimentelgarcia2023high} for details), according to which:
 \begin{eqnarray}\label{fwbD+}
{\mathbf{D}}^+(\tb{W}_l, \tb{W}_r) & =  &   \tb{F}(\tb{W}_0^+) - \mathbb{F}(\tb{W}_0^-, \tb{W}_0^+),\\\label{fwbD-}
{\mathbf{D}}^-(\tb{W}_l, \tb{W}_r) & = & \mathbb{F}(\tb{W}_0^-, \tb{W}_0^+) -   \tb{F}(\tb{W}_0^-),
\end{eqnarray}
where
$$
\tb{\sigma}_0 = \left (\begin{array}{c} K_0 \\ A_{0,0} \\  p_{e,0} \end{array} \right)
$$ 
is a vector of intermediate value between $\tb{\sigma}_l$ and $\tb{\sigma}_r$ such that $\tb{\sigma}_0 = \tb{\sigma}_l = \tb{\sigma}_r$ whenever $\tb{\sigma}_l= \tb{\sigma}_r$;  $\mathbb{F}(\cdot, \cdot)$ is a consistent numerical flux, i.e. a continuous function such that
$$
\mathbb{F}(\tb{W}, \tb{W}) = \tb{F}(\tb{W}), \quad \forall \tb{W};
$$
and $\tb{W}^{-}_0 = [A^-, q_l, K_0, A_{0,0}, p_{e,0}]^T$, $\tb{W}^{+}_0 = [A^+, q_r, K_0, A_{0,0}, p_{e,0}]^T$, where $A^\pm$ are computed by solving the equations:
    \begin{eqnarray}
    \frac{\rho}{2} \frac{q_l^2}{(A^{-})^2} +  K_0 \phi\left(\frac{A^{-}}{A_{0,0}} \right) + p_{e,0}  & = & \revc{\Gamma(\tb{W}_l)}, \label{w0-}\\
    \frac{\rho}{2} \frac{q_r^2}{(A^{+})^2} +  K_0 \phi\left(\frac{A^{+}}{A_{0,0}} \right) + p_{e,0}  & = &  \revc{\Gamma(\tb{W}_r)}.   \label{w0+}
\end{eqnarray}

In particular, in this article the HLL numerical flux is considered (see \cite{harten1983upstream}):
\begin{equation}
\mathbb{F}(\tb{W}_l, \tb{W}_r) = \begin{cases}
\tb{F}(\tb{W}_l) & \text{if $S_l \geq 0$,}\\
\displaystyle \frac{S_r \tb{F}(\tb{W}_l)  - S_l \tb{F}(\tb{W}_r)}{S_r - S_l}
+ \frac{S_l S_r}{S_r- S_l}(U_r - U_L), &\text{if $S_l < 0 < S_r$,}\\
\tb{F}(\tb{W}_r) & \text{if $S_r \leq 0$,}
\end{cases} \label{eq:hll}
\end{equation}
where 
$$
S_{l} = \min\{\lambda_{1}(\tb{W}_{l}), \lambda_{5}(\tb{W}_{r})\}, \quad S_{r} = \max\{\lambda_{1}(\tb{W}_{l}), \lambda_{5}(\tb{W}_{r})\}.
$$
The intermediate values $\tb{\sigma}_0 $ are selected so that equations \eqref{w0-}, \eqref{w0+} have always solution: see \cite{pimentelgarcia2023high}.

Concerning the reconstruction operator, a WB one $\{ \tb{P}_{i}(x) \}$ can be obtained from a standard one $\{ \tb{Q}_{i}(x) \}$ using the following reconstruction procedure: given a sequence of cell values $\{ \tb{U}_i \}$ the reconstruction at the $i$-th cell is computed as follows

\subsubsection*{Reconstruction procedure}

\begin{enumerate}

\item Apply the numerical solver to obtain, if possible,  approximations of the stationary solution $\tb{U}^*_i$ satisfying
\begin{equation}\label{firststage}
\sum_{m=1}^M \beta_m \tb{U}_i^*(x_{i}^m) = {\tb U}_{i}
\end{equation}
at the intercells and quadrature points of the cells belonging to stencil:
 $$\tb{U}^{*, \pm}_{i,j \pm 1/2}, \quad  \tb{U}^{*,m}_{i,j}, \quad m =1,...,M, \ j \in S_i.$$
 Compute 
    $$
    \widetilde{\tb U}^*_{i,j} = \sum_{m = 1}^M \beta_m \tb{U}^{*,m}_{i,j}, \quad j \in S_i. 
    $$

    \item Apply the standard reconstruction operator to the fluctuations
    $$
    \tb{V}_j =  \tb{U}_j  -  \widetilde{\tb U}^*_{i,j}, \quad j \in S_i,
    $$
    to obtain $\tb{Q}_i$.

    \item Define
 $$
\tb{P}_{i}(x_{i-1/2}) = \tb{U}^{*,+}_{i,i-1/2} + \tb{Q}_i(x_{i-1/2}), 
\quad \tb{P}_{i}(x_{i+1/2}) = \tb{U}^{*,-}_{i,i+1/2} + \tb{Q}_i(x_{i + 1/2}),$$
$$\tb{P}_{i}(x_{i}^m) = \widetilde{\tb{U}}^{*,m}_{i,i} + \tb{Q}_i(x_i^m), \quad  m=1,...,M. 
$$
    \end{enumerate}
Observe that this reconstruction procedure only defines the values of $\tb{P}_{i}$ at the intercell and quadrature points but, as it will be seen, this is enough to implement the WB numerical method.  It can be easily checked that, due to requirement (R1), this reconstruction operator is WB in the sense of Definition \ref{WBrecop}.

Once the fluctuations and reconstruction operator have been chosen, one more ingredient is necessary: a well-balanced quadrature formula to approximate the integrals in \eqref{eq:sdmethod}. It is based on the following equality satisfied by the stationary solutions $\tb{W}^* = (\tb{U}^*, \tb{\sigma})^T$ of \eqref{eq:model}:
$$
\tb{F}(\tb{W}^*(x_{i+1/2}^-)) - \tb{F}(\tb{W}^*(x_{i-1/2}^+)) = -\int_{x_{i-\frac{1}{2}}}^{x_{i+\frac{1}{2}}} \tb{S}(\tb{W}^*(x)) \cdot \tb{\sigma}'(x)\, dx + \int_{x_{i-\frac{1}{2}}}^{x_{i+\frac{1}{2}}} \tb{R}(\tb{W}^*(x)) \, dx .
$$
Therefore, the approximations of a stationary solution given by the numerical solver of problem (P) satisfy
$$
\tb{F}(\tb{W}^{*,-}_{i+1/2}) - \tb{F}(\tb{W}^{*,+}_{i-1/2}) \approx 
-\Delta x \sum_{m=1}^M \beta_m \tb{S}(\tb{W}^{*,m}_{i,i})\cdot \tb{\sigma}'(x_i^m) +
\Delta x \sum_{m=1}^M \beta_m \tb{R}(\tb{W}^{*,m}_{i,i} ),
$$
with an order of accuracy which is the minimum between that of the numerical solver and that of the quadrature formula. 
The following approximation of the integrals is then used:
\begin{eqnarray*}
 \int_{x_{i-\frac{1}{2}}}^{x_{i+\frac{1}{2}}} \tb{S}({\mathbb{P}}_{i}(x)) \cdot \tb{\sigma}'(x)dx - \int_{x_{i-\frac{1}{2}}}^{x_{i+\frac{1}{2}}} \tb{R}({\mathbb{P}}_{i}(x))\, dx  & \approx & 
- \tb{F}(\tb{W}^{*,-}_{i,i+1/2}) + \tb{F}(\tb{W}^{*,+}_{i,i-1/2}) \\
 &+&
\Delta x \sum_{m=1}^M \beta_m \left( \tb{S}({\mathbb{P}}_{i}(x_i^m)) -  \tb{S}(\tb{W}^{*,m}_{i,i}) \right)\cdot \tb{\sigma}'(x_i^m) \\
&-&
\Delta x \sum_{m=1}^M \beta_m \left(  \tb{R}({\mathbb{P}}_{i}(x_i^m)) - \tb{R}(\tb{W}^{*,m}_{i,i} ) \right),
\end{eqnarray*}
where the approximations of the stationary solution $\tb{U}^*_i$ satisfying \eqref{firststage} found at the first stage of the reconstruction operator are used. 
The expression of the numerical method is then as follows:
\begin{eqnarray} \label{eq:sdmethod2}
 \frac{d\tb{U}_{i}}{dt} &=&  -\frac{1}{\Delta x}
\Big(
{\tb{D}}^{-}_{i+\frac{1}{2}}+ {\tb{D}}^{+}_{i-\frac{1}{2}} + \tb{F}(\tb{W}_{i+1/2}^-) - \tb{F}(\tb{W}_{i-1/2}^+) - \tb{F}(\tb{W}^{*,-}_{i,i+1/2}) + \tb{F}(\tb{W}^{*,+}_{i,i-1/2}) \Big) \\ \nonumber
 &-& \qquad
\sum_{m=1}^M \beta_m \left( \tb{S}({\mathbb{P}}_{i}(x_i^m)) -  \tb{S}(\tb{W}^{*,m}_{i,i}) \right)\cdot \tb{\sigma}'(x_i^m) 
+
\sum_{m=1}^M \beta_m \left(  \tb{R}({\mathbb{P}}_{i}(x_i^m)) - \tb{R}(\tb{W}^{*,m}_{i,i} ) \right).
\end{eqnarray}
This numerical method is FWB in the following sense:
\begin{theorem}
Let us suppose that the path-conservative fluctuations and the reconstruction operator are well-balanced. Then, the numerical method \eqref{eq:sdmethod2} is well-balanced for every stationary solution of \eqref{eq:model} that is continuous everywhere but at the  discontinuity points of $\tb{\sigma}$ where it has admissible jumps satisfying \eqref{dcadm}.
\end{theorem}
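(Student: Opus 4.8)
The plan is to prove the statement directly from the definitions of Section~\ref{subsec:WB_methods} together with requirements (R1)--(R2), by exhibiting, for each mesh, the discrete stationary solution demanded by the notion of well-balancedness. Fix a stationary solution $\tb{U}^*$ of \eqref{eq:model} that is smooth away from the discontinuity points of $\tb{\sigma}$ and whose traces there satisfy \eqref{dcadm}. Choose any index $i_0$, set $\tb{U}_{i_0}:=\overline{\tb{U}}^*_{i_0}=\sum_{m=1}^{M}\beta_m\tb{U}^*(x_{i_0}^m)$, and apply the numerical solver of Problem~(P) of Section~\ref{ss:numsol} to this datum; write $\tb{U}^{*,\pm}_{i+1/2}$, $\tb{U}^{*,m}_{i}$ for the approximations it returns and $\widetilde{\tb{U}}^*_i$ for the values defined from them by \eqref{num_aver}. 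Since $\tb{U}^*$ is the stationary solution of \eqref{eq:model} satisfying the constraint \eqref{localproblem} for this datum and the solver has order $q$ (at least the order of the method), one gets $\tb{U}^{*,\pm}_{i+1/2}=\tb{U}^*(x_{i+1/2}^{\pm})+O(\Delta x^q)$ and $\tb{U}^{*,m}_{i}=\tb{U}^*(x_i^m)+O(\Delta x^q)$, whence $\widetilde{\tb{U}}^*_i=\overline{\tb{U}}^*_i+O(\Delta x^q)$. It therefore only remains to show that $\{\widetilde{\tb{U}}^*_i\}$ is an equilibrium of the ODE system \eqref{eq:sdmethod2}.

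To that end I would run the reconstruction procedure of Section~\ref{subsec:WB_methods} on the data $\tb{U}_i=\widetilde{\tb{U}}^*_i$. By (R1), the solver invoked in its first stage for cell $i$ returns, for every $j\in S_i$, exactly the previously computed $\tb{U}^{*,\pm}_{j\pm1/2}$, $\tb{U}^{*,m}_{j}$ and $\widetilde{\tb{U}}^*_j$, so the input fluctuations $\tb{V}_j=\widetilde{\tb{U}}^*_j-\widetilde{\tb{U}}^*_{i,j}$ of the second stage all vanish; hence the standard reconstruction operator produces $\tb{Q}_i\equiv 0$, and the third stage gives $\tb{P}_i(x_{i-1/2})=\tb{U}^{*,+}_{i,i-1/2}$, $\tb{P}_i(x_{i+1/2})=\tb{U}^{*,-}_{i,i+1/2}$ and $\tb{P}_i(x_i^m)=\tb{U}^{*,m}_{i,i}$. (This is exactly the well-balancedness of the reconstruction operator in the sense of Definition~\ref{WBrecop}, as already noted after the procedure.) In particular the interface states and quadrature-point values entering \eqref{eq:sdmethod2} coincide with the starred ones: $\tb{W}_{i+1/2}^-=\tb{W}^{*,-}_{i,i+1/2}$, $\tb{W}_{i-1/2}^+=\tb{W}^{*,+}_{i,i-1/2}$ and $\mathbb{P}_i(x_i^m)=\tb{W}^{*,m}_{i,i}$.

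Inserting these identities into the right-hand side of \eqref{eq:sdmethod2}, the interface flux terms cancel in pairs against their starred counterparts, and, because $\tb{S}$ and $\tb{R}$ are evaluated at the common argument $\mathbb{P}_i(x_i^m)=\tb{W}^{*,m}_{i,i}$, every term of the two quadrature sums vanishes; what survives is $-\frac{1}{\Delta x}\big({\tb{D}}^-_{i+1/2}+{\tb{D}}^+_{i-1/2}\big)$. Using (R1) once more, ${\tb{D}}^-_{i+1/2}={\tb{D}}^-(\tb{W}^{*,-}_{i+1/2},\tb{W}^{*,+}_{i+1/2})$ and likewise for ${\tb{D}}^+_{i-1/2}$. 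Finally, by (R2): if $\tb{\sigma}$ is continuous at $x_{i+1/2}$ then $\tb{W}^{*,+}_{i+1/2}=\tb{W}^{*,-}_{i+1/2}$; otherwise the traces satisfy the admissible jump conditions \eqref{dcadm}, which — recalling the definition \eqref{Gamma} of $\Gamma$ — amount to $q^-=q^+$ and $\Gamma(\tb{W}^{*,-}_{i+1/2})=\Gamma(\tb{W}^{*,+}_{i+1/2})$. In both situations \eqref{wb1o} holds, so the GHDR fluctuations vanish by Definition~\ref{wbpc}; concretely, the equations \eqref{w0-}--\eqref{w0+} defining $A^{\mp}$ then coincide, giving $\tb{W}_0^-=\tb{W}_0^+$ and hence ${\tb{D}}^\pm=0$ by consistency of $\mathbb{F}$. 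Therefore the right-hand side of \eqref{eq:sdmethod2} vanishes, $\{\widetilde{\tb{U}}^*_i\}$ is a discrete stationary solution, and since $\tb{U}^*$ was arbitrary the scheme is FWB.

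The only step I expect to be genuinely delicate is the first one: that the solver of Problem~(P), started from $\overline{\tb{U}}^*_{i_0}$, reproduces $\tb{U}^*$ up to $O(\Delta x^q)$ uniformly in $i$. This rests on the stationary solution satisfying the constraint \eqref{localproblem} being (locally) unique and on the formal accuracy of the solver of Section~\ref{ss:numsol}, including its behaviour across the admissible contact sitting at a discontinuity of $\tb{\sigma}$. Everything downstream of that is bookkeeping organised around (R1), (R2), Definition~\ref{wbpc} and Definition~\ref{WBrecop}.
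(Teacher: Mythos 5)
Your proposal is correct and follows essentially the same route as the paper's own proof: construct the candidate discrete stationary solution by running the solver of Problem~(P) from one cell average, invoke (R1) to show the well-balanced reconstruction reproduces the solver's interface and quadrature-point values (so the flux and source terms in \eqref{eq:sdmethod2} cancel), and invoke (R2) together with Definition~\ref{wbpc} to kill the fluctuations at the interfaces. Your closing caveat about the $O(\Delta x^q)$ accuracy of the solver is the one point the paper likewise asserts without further justification.
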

\begin{proof}
Given a stationary solution $\tb{U}^*$ apply the numerical solver to problem (P) with $i_0$ an arbitrary index and 
$$ \tb{U}_{i_0} = \sum_{m=1}^M \beta_m \tb{U}^*(x_i^m)$$
to obtain approximations 
$$\tb{U}^{*, \pm}_{i+1/2} \approx \tb{U}^{*}(x_{i+1/2}^{\pm}), \quad \tb{U}^{*,m}_{i} \approx \tb{U}^{*}(x_{i}^{m}), \ \ m \in \{1,...,M\}, \ \ \forall i.$$
Then,  $\widetilde{\tb U}^*_i$ given by \eqref{num_aver} approximates the cell-averages of $\tb{U}^*$, computed with the quadrature formula, with order $q$. Let us check that
the sequence $\{\widetilde{\tb U}^*_i\} $ is a discrete stationary solution. First, due to requirement (R2) one has
$$
\mathbb{D}^\pm(\tb{W}^{*,-}_{i+1/2}, \tb{W}^{*,+}_{i+1/2}) = 0
$$
at every intercell point: this is due to \eqref{eq:cons} if $\sigma$ is continuous at $x_{i + 1/2}$ and to the WB character of the path-conservative fluctuations otherwise. 
Next, due to the well-balanced property of the  reconstruction operator one has
$$
 {\mathbb{P}}_{i}(x_i^m) = \tb{W}^{*,m}_{i,i}, \quad m=1, \dots, M, \ \forall i,
$$
and then the last two terms in \eqref{eq:sdmethod2} vanish. Finally, since
$$
\mathbb{P}_{i}(x_{i-1/2}) = \tb{W}^{*,+}_{i-1/2}, \quad \mathbb{P}_{i}(x_{i+1/2}) = \tb{W}^{*,-}_{i+1/2},
$$
one has
$$
\tb{F}(\tb{W}_{i+1/2}^-) = \tb{F}(\tb{W}^{*,-}_{i,i+1/2}), \quad  \tb{F}(\tb{W}_{i-1/2}^+) = \tb{F}(\tb{W}^{*,+}_{i,i-1/2}).
$$
Therefore, \eqref{eq:discrete_stationary} holds as we wanted to proof.
    
\end{proof}

\subsection{Numerical solver for problem (P)}\label{ss:numsol}
According to the discussion above, a numerical solver satisfying requirements (R1) and (R2) is needed to approximate the value at some points of the solution
$\tb{U}^* = (A^*, q^*)^T$ of the problem
\begin{equation}\label{problemP}
\left\{
\begin{array}{l}
   \partial_x \tb{F}(\tb{W}^*) + \tb{S}(\tb{W}^*)\cdot\partial_x \tb{\sigma}  = \tb{R}(\tb{W}^*), \\
 \displaystyle \sum_{m=1}^M \beta_m \tb{U}^*(x_{i_0}^m) = {\tb U}_{i_0},
 \end{array}
 \right.
\end{equation}
where $\tb{W}^* = (\tb{U}^*, \tb{\sigma})^T$. In this paragraph we only discuss the case in which no critical state is reached. In that case, the ODE system is equivalent to 
\eqref{stationary_solutions_extended_q}-\eqref{stationary_solutions_extended_A}, so that
$$
q^*(x) = q_{i_0}, \quad \forall x$$
and the problem reduces to find $A^*$ satisfying
\begin{equation}\label{problemP_A}
\left\{
\begin{array}{l}
   \partial_x A^* = G(x,A^*), \\
\displaystyle \sum_{m=1}^M \beta_m A^*(x_{i_0}^m) = A_{i_0},
 \end{array}
 \right.
\end{equation}
with 
\begin{equation}\label{G}
G(x, A) =  -\frac{A \phi\left(\frac{A}{A_0(x)}\right) \partial_x K(x) - 
K(x) \left(\frac{A}{A_0(x)}\right)^2 \partial_a \phi\left(\frac{A}{A_0(x)}\right) \partial_x A_0(x) + A \partial_x p_e(x) + \gamma \pi \mu \frac{q_{i_0}}{A} {\blue - \rho g(x) A}}
{- \rho \left(\frac{q_{i_0}}{A}\right)^2 + K(x) \frac{A}{A_0(x)} \partial_a\phi\left(\frac{A}{A_0(x)}\right)  }.
\end{equation}
To solve this problem, a  Gauss-Legendre  RK collocation method with Butcher tableau 
\begin{equation}\label{RK4_collocation_gen}
\begin{array}{c|ccc}
    c_{1} & a_{1,1} & \dots & a_{1,M}\\
    c_{2} & a_{2,1} & \dots & a_{2,M}\\
    \vdots & \vdots & \ddots & \vdots\\
    c_M & a_{M,1} & \dots & a_{M,M}\\
     \hline
      & \beta_{1} & \dots & \beta_{M}
\end{array}
\end{equation}
is selected in which $\beta_m$, $x_i^m = x_{i - 1/2} + c_m \Delta x$, $m =1, \dots, M$ are the weights and points of the $s$-point Gauss quadrature formula, that is the one that will be used in the design of the numerical method. The solution of \eqref{problemP_A} is then approximated at the intercell and quadrature points with the following algorithm:

\begin{enumerate}

\item For $i = i_0$:

\begin{itemize}
    \item Find $A^{*,+}_{i_0-1/2}$, $A^{*,m}_{i_0}$, $m = 1, \dots, M$ by solving the system:
\begin{equation}\label{RKaverage1}
\left\{
\begin{array}{l}
\displaystyle  A^{*,m}_{i_0} = A^{*,+}_{i_0-1/2} + \Delta x \sum_{j = 1}^M a_{m,j} G(x_{i_0}^m,  A^{*,j}_{i_0}), \quad m= 1, \dots, M, \\
\displaystyle \sum_{m=1}^M \beta_m A_{i_0}^{*,m} = A_{i_0}.
\end{array}
 \right.
\end{equation}

\item Compute
\begin{equation}\label{RKaverage2}
A^{*,-}_{i_0+1/2} = A^{*,+}_{i_0-1/2} + \Delta x \sum_{m = 1}^M \beta_m G(x_{i_0}^m,  A^{*,j}_{i_0}),
\end{equation}
    
\end{itemize}

\item For $i = i_0 +1, i_0 +2, \dots$
\begin{itemize}
\item If $\tb{\sigma}$ is continuous at $x_{i-1/2}$ then define
$$
A_{i-1/2}^{*,+} = A_{i-1/2}^{*,-}.
$$
Otherwise, find $A_{i-1/2}^{*,+}$ by solving
$$
\frac{\rho}{2} \frac{q_{i_0}^2}{(A_{i-1/2}^{*,+})^2} +  K(x_{i-1/2}^+) \phi\left(\frac{A_{i-1/2}^{*,+}}{A_{0}(x_{i-1/2}^+)} \right) + p_{e}(x_{i-1/2}^+) \\
=  \Gamma^-_{i - 1/2},
$$
with
$$
 \Gamma^-_{i - 1/2} =  \frac{\rho}{2} \frac{q_{i_0}^2}{(A_{i-1/2}^{*,-})^2} +  K(x_{i-1/2}^-) \phi\left(\frac{A_{i-1/2}^{*,-}}{A_{0}(x_{i-1/2}^-)} \right) + p_{e}(x_{i-1/2}^-) .
$$

\item Find $A^{*,m}_{i}$, $m = 1, \dots, M$ by solving the system:
\begin{equation}\label{RKforward1}
 A^{*,m}_{i} = A^{*,+}_{i-1/2} + \Delta x \sum_{j = 1}^M a_{m,j} G(x_{i}^m,  A^{*,j}_{i}), \quad m= 1, \dots, M.
\end{equation}

\item Compute
\begin{equation} \label{RKforward2}
A^{*,-}_{i+1/2} = A^{*,+}_{i-1/2} + \Delta x \sum_{m = 1}^M \beta_m G(x_{i}^m,  A^{*,j}_{i}).
\end{equation}

\end{itemize}

\item For $i = i_0 -1, i_0 -2, \dots$
\begin{itemize}
\item If $\tb{\sigma}$ is continuous at $x_{i+1/2}$ then define
$$
A_{i+1/2}^{*,-} = A_{i+1/2}^{*,+}.
$$
Otherwise, find $A_{i+1/2}^{*,-}$ by solving
$$
\frac{\rho}{2} \frac{q_{i_0}^2}{(A_{i+1/2}^{*,-})^2} +  K(x_{i+1/2}^-) \phi\left(\frac{A_{i+1/2}^{*,-}}{A_{0}(x_{i+1/2}^-)} \right) + p_{e}(x_{i+1/2}^-) \\
=  \Gamma^+_{i + 1/2},
$$
with
$$
 \Gamma^+_{i + 1/2} =  \frac{\rho}{2} \frac{q_{i_0}^2}{(A_{i+1/2}^{*,+})^2} +  K(x_{i+1/2}^+) \phi\left(\frac{A_{i+1/2}^{*,+}}{A_{0}(x_{i+1/2}^+)} \right) + p_{e}(x_{i+1/2}^+) .
$$

\item Find $A^{*,m}_{i}$, $m = 1, \dots, M$ by solving the system:
\begin{equation}\label{RKbackward1}
 A^{*,m}_{i} = A^{*,-}_{i+1/2} - \Delta x \sum_{j = 1}^M a_{M-m+1,M-j+1} G(x_{i}^m,  A^{*,j}_{i}), \quad m= 1, \dots, M.
\end{equation}

\item Compute
\begin{equation}\label{RKbackward2}
A^{*,+}_{i-1/2} = A^{*,-}_{i+1/2} - \Delta x \sum_{m = 1}^M \beta_m G(x_{i}^m,  A^{*,j}_{i}).
\end{equation}

\end{itemize}

\end{enumerate}

\begin{remark}
Please note that, while \eqref{RKforward1}-\eqref{RKforward2} and \eqref{RKbackward1}-\eqref{RKbackward2} are standard forward and backward applications of the RK collocation methods to solve \eqref{problemP_A} in the cells of the stencil, this is not the case for \eqref{RKaverage1}-\eqref{RKaverage2}, where the method is adapted  to take into account that a condition on the average is given at the $i$th cell. The expression \eqref{RKbackward1}-\eqref{RKbackward2} of the backward application of the method is based on the symmetry of the Gauss quadrature points with respect to the midpoint of the interval.
\end{remark}

Requirement (R2) is satisfied because of the computations of the left and right limits of $A$ at the intercells. Requirement (R1) is satisfied because of the following reversibility or symmetry property satisfied by RK collocation methods: if, starting from $ A^{*,+}_{i-1/2}$ at $x_{i-1/2}$, one step of the forward RK collocation method 
\eqref{RKforward1}-\eqref{RKbackward2} gives 
$A^{*,-}_{i + 1/2}$ at $x_{i + 1/2}$, then one step of the backward method \eqref{RKbackward1}-\eqref{RKbackward2} starting from $A^{*,-}_{i + 1/2}$ at $x_{i + 1/2}$ gives $ A^{*,+}_{i-1/2}$ at $x_{i-1/2}$. No explicit method has this property (see \cite{gomez2021high}).

\subsection{First-order fully well-balanced path-conservative method}\label{subsec:first-order}
The mid-point rule 
$$
\int_{x_{i-1/2}}^{x_{i+1/2}} f(x) \, dx \approx \Delta x f(x_i)
$$
will be considered and the second-order 1-stage RK collocation method whose Butcher tableau is 
\begin{equation}\label{RK2_collocation}
    \begin{tabular}{c|c}
    1/2 & 1/2 \\
    \hline
     & 1
\end{tabular}
\end{equation}
will be used for solving \eqref{problemP_A} that, in this case, reduces to 
\begin{equation}\label{problemP_A_fso}
\left\{
\begin{array}{l}
   \partial_x A^* = G(x,A^*), \\
\displaystyle A^*(x_{i_0}) = A_{i_0}.
 \end{array}
 \right.
\end{equation}
The trivial piecewise constant reconstruction operator is considered, i.e.
$$
\tb{Q}_i(x) =  \tb{U}_i, \quad \forall x \in [x_{i-1/2}, x_{i+1/2}], \quad \forall i.$$
Numerical method \eqref{eq:sdmethod2} reduces in this case to:
\begin{equation}
\label{eq:semi_discrete_WB1}
\frac{d\tb{U}_{i}}{dt} = -\frac{1}{\Delta x}
\Big(
\tb{D}^{-}_{i+\frac{1}{2}}+\tb{D}^{+}_{i-\frac{1}{2}}
\Big),
\end{equation}
where $\tb{D}^{\pm}_{i+\frac{1}{2}} = \tb{D}^{\pm}\left(\tb{W}^{*,-}_{i-1,i+1/2}, \tb{W}^{*,-}_{i,i+1/2} \right)$. 
The explicit Euler method is used for the time discretization, so that the fully discrete method is then as follows:
\begin{equation}
\label{eq:fully_discrete_WB1}
\tb{U}_{i}^{n+1} = \tb{U}_{i}^{n}-\frac{\Delta t}{\Delta x}
\Big(
\tb{D}^{-}_{i+\frac{1}{2}}+\tb{D}^{+}_{i-\frac{1}{2}}
\Big).
\end{equation}

\subsection{Second-order fully well-balanced method}\label{subsec:second-order}

The mid-point rule and the second-order 1-stage collocation RK method whose Butcher tableau is \eqref{RK2_collocation} are again considered.  The second-order MUSCL reconstruction operator with $minmod$ limiter is chosen  (see \cite{van1974towards}): given a sequence of cell-values $\{ \tb{V}_i \}$, the $k$-component of $\tb{Q}_i$ is given by
    \begin{equation*}
    \tb{Q}_{i,k}(x)  =   V_{i,k}+minmod\left(\displaystyle \frac{V_{i,k}-V_{i-1,k}}{\Delta x}, \frac{V_{i+1,k}-V_{i,k}}{\Delta x}\right)(x-x_{i}),
    \end{equation*}
    where
    $$
    minmod(a,b) = \begin{cases}
    \min\{a,b\} & \text{if} \ \ a,b>0, \\
	\max\{a,b\} & \text{if} \ \ a,b <0, \\
    0 & \text{otherwise.}
    \end{cases}
    $$
Since, in this case, one has
$$ \tb{P}_i (x_i) = \tb{U}_i^{*,1}, $$
the numerical method \eqref{eq:sdmethod2} reduces in this case to
\begin{equation}
\label{eq:semi_discrete_WB2}
\frac{d\tb{U}_{i}}{dt}  = -\frac{1}{\Delta x}
\Big(
\tb{D}^{-}_{i+\frac{1}{2}}+\tb{D}^{+}_{i-\frac{1}{2}} + \tb{F}\left(\mathbb{P}_{i}(x_{i+ \frac{1}{2}})\right)
-\tb{F}\left(\tb{W}_{i,i+ \frac{1}{2}}^{*,-}\right)+\tb{F}\left(\tb{W}_{i,i- \frac{1}{2}}^{*,+}\right)-\tb{F}\left(\mathbb{P}_{i}(x_{i- \frac{1}{2}})\right)
\Big),
\end{equation}
with  $\tb{D}^{\pm}_{i+\frac{1}{2}} = \tb{D}^{\pm}\left(\mathbb{P}_{i}(x_{i+\frac{1}{2}}), \mathbb{P}_{i+1}(x_{i+\frac{1}{2}})\right).$ Finally, the discretization in time is performed with the following second-order TVD RK method:
\begin{eqnarray*}
 & \tb{W}_{i}^{(1)} = \tb{W}_{i}^{n} + \Delta t L(\tb{W}_{i}^{n}),\\
 & \tb{W}_{i}^{n+1} = \frac{1}{2}\tb{W}_{i}^{n} + \frac{1}{2}\tb{W}_{i}^{(1)} + \frac{1}{2}L(\tb{W}_{i}^{(1)}),
\end{eqnarray*}
where $L(\tb{W}_{i}) $ represents the right-hand side  of \eqref{eq:semi_discrete_WB2}: see \cite{gottlieb1998total}.

\subsection{Third-order fully well-balanced method}\label{subsec:third-order}

The two-point Gauss quadrature 
$$
\int_{x_{i-1/2}}^{x_{i+1/2}} f(x) \, dx \approx \frac{\Delta x}{2} (f(x_{i,1}) + f(x_{i,2})),
$$
where
$$
x_{i,1} = x_{i-\frac{1}{2}} + \frac{\Delta x}{2}\left(-\sqrt{\frac{1}{3}}+1\right),\ \
x_{i,2} = x_{i-\frac{1}{2}} + \frac{\Delta x}{2}\left(\sqrt{\frac{1}{3}}+1\right),
$$
is now used
and the fourth-order 2-stage Gauss-Legendre collocation RK method whose Butcher tableau is 
\begin{equation}\label{RK4_collocation}
    \begin{tabular}{c|cc}
    $\frac{1}{2}-\frac{\sqrt{3}}{6}$ & $\frac{1}{4}$ & $\frac{1}{4}-\frac{\sqrt{3}}{6}$ \\
    $\frac{1}{2}+\frac{\sqrt{3}}{6}$
     & $\frac{1}{4}+\frac{\sqrt{3}}{6}$ & $\frac{1}{4}$ \\
     \hline
      & $\frac{1}{2}$ & $\frac{1}{2}$
\end{tabular}.
\end{equation}
The third-order CWENO reconstruction operator is used as the standard one  (see \cite{cravero2016accuracy}) and
the discretization in time is performed with the following third-order TVD RK method:
\begin{eqnarray*}
 & \tb{W}_{i}^{(1)} = \tb{W}_{i}^{n} + \Delta t L(\tb{W}_{i}^{n}),\\
 & \tb{W}_{i}^{(2)} = \frac{3}{4}\tb{W}_{i}^{n} + \frac{1}{4} \tb{W}_{i}^{(1)} + \frac{1}{4}\Delta t L(\tb{W}_{i}^{(1)}), \\
 & \tb{W}_{i}^{n+1} = \frac{1}{3}\tb{W}_{i}^{n} + \frac{2}{3}\tb{W}_{i}^{(2)} + \frac{2}{3}L(\tb{W}_{i}^{(2)}),
\end{eqnarray*}
where $L(\tb{W}_{i}) $ represents the right-hand side  of \eqref{eq:sdmethod2}: see \cite{gottlieb1998total}.

\subsection{Critical case}\label{subsec:critical}
Let us suppose that $\tb{U}^* = (A^*(x), q^*)^T$, with $q^*$ constant, is a smooth stationary solution that reaches a critical state at $x_c$. Then,
$$-u^2 + a\frac{K}{\rho} \partial_a \phi(a) = 0 $$ 
at $x = x_c$ and the denominator in function \eqref{G} vanishes. This makes it difficult to approximate $A^*$ by solving numerically equation 
\eqref{problemP_A}. The strategy developed in \cite{gomez2021collocation} to approximate transcritical stationary solutions for the shallow water equations is adopted here.
The strategy is based on the following reasoning:  if the critical state in $x_c$ is isolated and $A^*$ is smooth then one has
$$
\partial_x A^*(x_c) = \lim_{x \to x_c} \partial_x A^*(x) = \lim_{x \to x_c} G(x, A^*(x)),
$$
and therefore, the function $G(x, A^*(x))$ has to have a limit in $x_c$, which implies that the numerator of $G$ has to vanish in $x_c$:
\begin{equation}\label{eq:critical_relation}
    \frac{A}{\rho}\phi(a) \partial_{x}K -a^2\frac{K}{\rho}\partial_a\phi(a) \partial_x A_{0} + \frac{A}{\rho}\partial_{x}p_{e} +  \frac{f}{\rho} {\blue - g A} = 0.
\end{equation}
Therefore, the derivative of $A^*$ at $x_c$ can be computed by applying L'Hôpital's rule to  an indeterminate limit of the form $\left(\frac{0}{0}\right)$:
\begin{equation}
    A_{x}(x_{c}) = \lim_{x \rightarrow x_{c}} -\frac{\frac{A}{\rho}\phi(a) \partial_{x}K -a^2\frac{K}{\rho}\partial_a\phi(a) \partial_x A_{0} + \frac{A}{\rho}\partial_{x}p_{e} + \frac{f}{\rho}{\blue - g A}}{-u^2 + a\frac{K}{\rho} \partial_a \phi(a)}.
\end{equation}

 After some straightforward but cumbersome computations, the following  formula is obtained:
\begin{equation}
    A_{x}(x_{c}) = \frac{A_{x}(x_{c})M_{1} + E}{A_{x}(x_{c})\alpha + M_{2}},
\end{equation}
where \eqref{eq:critical_relation} has been used and $\alpha$, $M_{1}$, $M_{2}$ and $E$ are given by:
$$\alpha = -\frac{K}{A_{0}}\left(3\partial_a \phi(a) + a\partial_{a}^{2}\phi(a)\right),$$
$$M_{1} = a\partial_{a}\phi(a) \partial_x K - \frac{K}{A_{0}}\left(a\partial_a \phi(a) + a^2\partial_{a}^{2}\phi(a)\right)\partial_x A_{0} - 2\frac{f}{A}$$
$$M_{2} = -a\partial_{a}\phi(a) \partial_x K + \frac{K}{A_{0}}\left(a\partial_a \phi(a) + a^2\partial_{a}^{2}\phi(a)\right)\partial_x A_{0}$$
$$E = A\phi(a)\partial^{2}_{x}K - Ka^2\partial_{a}\phi(a)\partial_x^2 A_{0} - 2a^{2}\partial_{a}\phi(a)(\partial_{x}K) (\partial_{x}A_{0}) + \frac{K}{A_{0}}a^2\left(a\partial^{2}_{a}\phi(a) + 2\partial_{a}\phi(a)\right)(\partial_{x}A_{0})^{2} + A\partial_{x}^{2}p_{e}-\rho\,A\,\partial_xg.$$
Therefore, $A^*(x_c)$ solves the second-degree equation
\begin{equation}
    A_{x}(x_{c})^{2} \alpha + A_{x}(x_{c})(M_{2}-M_{1}) - E = 0,
\end{equation}
that leads to
\begin{equation}\label{eq:critical_slope}
    A_{x}(x_{c}) = \frac{M_{2}-M_{1} \pm \sqrt{(M_{2}-M_{1})^{2} + 4\alpha E}}{2\alpha}.
\end{equation}

This analytical expression for the derivatives of smooth stationary solutions at critical points is used in the well-balanced reconstruction procedure as follows: when solving
\eqref{problemP_A}, if the state $\tb{U}_{i_0}$ is close to be critical, what is measured as follows
\begin{equation}\label{eq:critical_state}
\displaystyle \Big|\frac{|u_{i_0}|}{{c}_{i_0}} - 1\Big| < \epsilon,
\end{equation}
where
$$
u_{i_0} = \frac{q_{i_0}}{A_{i_0}}, \quad a_{i_0} = \frac{A_{i_0}}{A_0(x_{i_0})}, \quad c_{i_0} = \sqrt{\frac{K(x_{i_0})}{\rho}a_{i_0}\partial_{a}\phi({a_{i_0}})},
$$
and  $\epsilon$ is a selected threshold ($\epsilon= 10^{-8}$ has been considered here), then 
\begin{itemize}
    \item If 
    $$
\left| \frac{A_{i_0}}{\rho}\phi(a_{i_0}) \partial_{x} K(x_{i_0}) -a_{i_0}^2
\frac{K(x_{i_0})}{\rho}\partial_a\phi(a_{i_0}) \partial_x A_{0}(x_{i_0}) + \frac{A_{i_0}}{\rho}\partial_{x}p_{e}(x_{i_0}) +  \frac{f}{\rho}  {\blue - g(x_{i_0}) A_{i_0}}\right| > \epsilon,
$$
so that
    \eqref{eq:critical_relation} is not verified, it is assumed that \eqref{problemP_A} has no solution so that the well-balanced reconstruction is not necessary and the standard one is used. 
    \item Otherwise, it is assumed that the solution of \eqref{problemP_A} is transcritical and its derivative at $x_{i_0}$ is computed by
    $$ D = \begin{cases} \displaystyle\frac{M_{2}-M_{1} + \sqrt{(M_{2}-M_{1})^{2} + 4\alpha E}}{2\alpha} & \text{if $A$ is decreasing close to $x_{i_0}$;}\\
    \\
                         \displaystyle \frac{M_{2}-M_{1} - \sqrt{(M_{2}-M_{1})^{2} + 4\alpha E}}{2\alpha} & \text{otherwise}.      
    \end{cases}
 $$
 and the solution of \eqref{problemP_A} is approximated by
 $$
 A^*_{i_0}(x) = \tb{U}_{i_0} + D(x- x_{i_0})
 $$
 at the ${i_0}$-th cell. $A$ is decided to be increasing close to $i_0$ if $A_{i_0 + 1} > A_{i_0 -1}$. 
\end{itemize}

{\blue

\section{Application to networks} \label{section--networks}

In this section we describe the application of the numerical methods developed in the previous section to networks of vessels. Let us consider a network with $N_v$ vessels, $N_n$ bifurcation/junction vertexes, $N_b$ vertexes where one component of the state vector is to be prescribed and $N_t$ terminal vessels, i.e. vessels coupled to lumped-parameter models. Such models are commonly used to describe peripheral circulation in a simple and efficient manner. Each vessel constitutes a 1D domain, with blood flow governed by 
\begin{equation}
            \partial_t \tb{W}^{k} + \tb{\tb{\mathcal{A}}}(\tb{W}^{k})\partial_{x} \tb{W}^{k} = 0,
\end{equation}
as given by \eqref{eq:systemextended} and $x \in [0, L_k]$, with $L_k$ the length of the $k$-th vessel. For each 1D domain, we apply the numerical methodology developed in the previous sections. In the following we describe how we treat each one of the above mentioned features needed to model blood flow in a network of vessels.

\subsection{Coupling 1D domains}

Without loss of generality, we consider a network with a single shared vertex $V_p$, and $N_p$ vessels, all sharing $V_p$, and proceed by solving a Riemann problem at this vertex following the methodology described in \cite{muller2015high}. Briefly, at each time level $\hat{t}$, we solve the following Riemann problem:
    \begin{equation}\label{eq:Riemann_junction}
        \left\{\begin{array}{l}
            \partial_t \tb{W}^{k} + \tb{\tb{\mathcal{A}}}(\tb{W}^{k})\partial_{x} \tb{W}^{k} = 0, \\
            \tb{W}^{k}(x,\hat{t}) = \tb{W}_{1D}^{k,n},
        \end{array}\right.
    \end{equation}
with $k=1,\ldots,N_p$ and where $\tb{W}_{1D}^{k,n}$ are the reconstructed states $\mathbb{P}_{i}^{n,k}$ defined in Subsections \ref{subsec:first-order}, \ref{subsec:second-order}, \ref{subsec:third-order}, at time $\hat{t}$ and evaluated at node $V_p$, for each 1D vessel.
    In order to solve this Riemann problem, which implies computing $\tb{W}^{k}_*$, $k=1, \ldots, N_p$ intermediate states, we use an all-rarefaction approximate Riemann solver approach. In particular, we use wave relations proposed in \cite{toro2013flow}. Application of such wave relations, which include Riemann invariants for genuinely nonlinear characteristic fields and wave relations for the stationary contact discontinuity in correspondence of the jump in geometrical and mechanical properties at the bifurcation/junction point, results in the following nonlinear system:
\begin{itemize}
            \item Conservation of mass: 
            
            \begin{equation}
            \sum_{k=1}^{N_{p}} g_{p}^{k}q_{*}^{k} = 0,  \ \ g_{p}^{k} = \begin{cases}
                1, & x_{p}^{k} = L_{k},\\-1, & x_{p}^{k} = 0.
            \end{cases}
            \end{equation}
            \item Total pressure:
            \begin{equation}
            p(A_{*}^{1}) +\frac{1}{2}\rho\left(\frac{q_{*}^{1}}{A_{*}^{1}}\right) - p(A_{*}^{k}) +\frac{1}{2}\rho\left(\frac{q_{*}^{k}}{A_{*}^{k}}\right) = 0, k =2,...,N_{p}.
            \end{equation}
        \item Riemann invariants:
        \begin{equation}
        u_{*}^{k}-u_{1D}^{k} + g_{p}^{k}\beta^{k}=0, \ \ k=1,...,N_{p},  \label{eqa:riemanninv}
       \end{equation}
        with  $\beta^{k} = \int^{A^{k}_*}_{A^k_{1D}} c(\tau)/\tau d\tau$.
\end{itemize}
The obtained states $\tb{W}^{k}_*$, with $k=1, \ldots, N_p$, are then used to compute Godunov-type numerical fluxes at boundary cell faces for each vessel sharing vertex $V_p$. For the $k$-th vessel with $N^k_c$ computational cells, we set:
\begin{equation}
            \mathbb{F}^k_j = \tb{F}^{k}_j (\tb{W}^{k}_*)\;, j = \begin{cases}
                N^k_c + \frac{1}{2}, & x_{p}^{k} = L^{k},\\\frac{1}{2}, & x_{p}^{k} = 0\,,
            \end{cases} \label{eq:numfluxjunc}
 \end{equation}
that will replace the HLL flux \eqref{eq:hll} used in internal cell interfaces. Notably, in a general network, this procedure has to be performed for all $N_n$ bifurcation/junction vertexes, for each time stage involved in the time-marching algorithm defined by the Butcher tableau introduced in \eqref{RK4_collocation_gen}. 

\subsection{Prescribing boundary conditions}

The hyperbolic nature of the 1D blood flow model under study, together with the fact that flow is subcritical under physiological flow conditions, implies that only one state variable, pressure or flow, can be prescribed at the inlet/outlet of vessels. Consider a single vessel $k$ containing one of the $N_b$ \emph{boundary} vertexes. In general, we want to prescribe either flow rate or area/pressure over time at one of the boundary cell faces of the vessel. In this case, we solve \eqref{eqa:riemanninv} for the remaining component of state vector $\tb{W}^{k}_{*,b}$ and then compute the numerical flux at the cell face of interest as specified in \eqref{eq:numfluxjunc}. As for the case of coupling multiple 1D domains, also in this case $\tb{W}^{k}_{1D}$ are provided by the reconstructed polynomials $\mathbb{P}_{i}^{n, k}$, defined in Subsections \ref{subsec:first-order}, \ref{subsec:second-order}, \ref{subsec:third-order}.

\subsection{Coupling of 1D domains and lumped-parameter models}

In the application shown in Section \ref{section--results}, as well as in many works on computational haemodynamics, it is necessary to use lumped-parameter models to describe the interaction of 1D vessels with peripheral circulation. Conceptually, the model of interest to this work is a lumped-parameter model consisting of a proximal resistance $R^k_\mathrm{prox}$, connected to the $k$-th terminal vessel, a capacitor $C^k$ and a distal resistance $R^k_\mathrm{dist}$, connected to the capacitor and to a distal venous compartment. Such models are described by the following ordinary differential equation:
\begin{equation}
\frac{d P^k}{dt} = \frac{1}{C^k} \left( q^k_\mathrm{in} - (P^k - P_\mathrm{ven}) / R^k_\mathrm{dist}\right)\;.
\end{equation}
Here, $P_\mathrm{ven}$ is the distal venous pressure.

For any given time level $\hat{t}$ and at the boundary cell face connected to the lumped-parameter model, we have to compute the state vector $\tb{W}^{k}_{*,t}$, along with $q^k_\mathrm{in} $. This is achieved by first noting that:
$$
q^k_\mathrm{in} = q^{k}_{*}\;
$$
and then by enforcing pressure continuity:
$$
p(A^k_{*}) = P^k + q^{k}_{*}  R^k_\mathrm{prox} 
$$
and constancy of the appropriate Riemann invariant, as in \eqref{eqa:riemanninv}. Also in this case we have that $\tb{W}^{k}_{1D}$ are computed from the reconstructed polynomials $\mathbb{P}_{i}^{n, k}$. As for previous cases, we use $\tb{W}^{k}_{*,t}$ in \eqref{eq:numfluxjunc} to compute the numerical flux at the cell border of interest.

}

\section{Numerical tests} \label{section--results}

In this section, we examine various numerical tests to assess the effectiveness of the well-balanced numerical methods presented in preceding sections. The following numerical methods will be employed to discretize system \eqref{compform}:

\begin{itemize}
\item O1\_WB\_GHR\_HLL: first-order fully well-balanced method using the HLL numerical  flux;
\item O2\_WB\_GHR\_HLL:  second-order fully well-balanced extension;
\item O3\_WB\_GHR\_HLL:  third-order fully well-balanced extension;
\item O1\_noWB\_GHR\_HLL: first-order non well-balanced method using the HLL numerical  flux;
\item O2\_noWB\_GHR\_HLL:  second-order non well-balanced extension;
\item O3\_noWB\_GHR\_HLL:  third-order non well-balanced extension;
\end{itemize}

\begin{remark}
In all cases, to prevent the reconstruction of quantities with magnitudes similar to rounding errors, we employ the non-dimensional form \eqref{ndform}. This is especially crucial in the context of third-order scenarios. In practice we choose characteristic values using the following approach:
     $$
     \overline{L} = x_{f}-x_{0}, \quad \overline{A} = \frac{\sum_{i=1}^{N} A_{0}(x_{i})}{N}, \quad \overline{U} = \frac{\sum_{i=1}^{N}\left| \frac{q(x_{i}, 0)}{A_{0}(x_{i})}\right|}{N}, \quad \overline{K} = \frac{\sum_{i=1}^{N} K(x_{i})}{N}, \quad \overline{p}_{e} = \frac{\sum_{i=1}^{N}  p_{e}(x_{i})}{N}, \quad \overline{g} = \frac{\sum_{i=1}^{N}  g(x_{i})}{N}
     $$
     $$
     \overline{T} = \frac{\overline{L}}{\overline{U}}, \quad \overline{\Delta x} = \frac{\Delta x}{\overline{L}}, \quad \overline{t}_{\text{end}} = t_{\text{end}}{\overline{T}},
     $$
     where $[x_{0}, x_{f}]$ is the space interval; $\mathbf{W}_{0}(x)=[A(x,0), q(x,0), K(x), A_{0}(x), p_{e}(x)]^{T}$ is the initial condition; $g$ is the gravity function; $N$ is the number of points of the uniform mesh; $t_\text{end}$ is the final time of the simulation.
     
 Although all the methods are stable under the usual restriction
$$
\Delta t = CFL \frac{\Delta x}{\max_i\{ |\lambda_j(\tb{W}_i^n)|, \quad j =1,5. \}},
$$
with $CFL \in (0, 1]$, all numerical simulations have been conducted with $CFL = 0.5$, as this value guarantees the positivity of the HLL solver. For further details see \cite{bouchut2004nonlinear}.

\end{remark}

\subsection{Single vessel tests}

In this subsection we consider tests concerning single arteries or veins. In all test cases the friction term $f(x,t)=\gamma \pi \mu \frac{q}{A}$ is taken using $\gamma = 8$ and $\mu = 0.0045\,Pa\, s$.

\begin{table}[h]
    \small
  	\centering
  	\begin{tabular}{|c|c|c|c|c|c|c|c|c|c|c|}
  		\hline 
  		\textbf{Parameters} & $m$ & $n$ & $\rho$ & $L$ & $x_g$ & $K_{\text{ref}}$ & $A_{0,\text{ref}}$ & $p_{e,\text{ref}}$ & $t_{\text{end}}$ & $N$\\
  		\hline
  		Well-balanced tests & & & & & & & & & & \\
  		\hline 
  		Test 1 & 10 & -1.5 & 1050 & 0.015 & 0.5L & 100 & $0.00015^{2}\pi$ & 0 & 1 & 100 \\ 
  		\hline
  		Test 2 & 10 & -1.5 & 1050 & 0.015 & 0.5L & 100 & $0.00015^{2}\pi$ & 0 & 0.5 & 100 \\
        \hline
  		Test 3 & 10 & -1.5 & 1050 & 0.015 & - & 1000 & $0.00015^{2}\pi$ & 1000 & 1 & 100 \\
  		\hline
        Test 4 & 10 & -1.5 & 1050 & 0.015 & - & 1000 & $0.00015^{2}\pi$ & 1000 & 1 & 100 \\
  		\hline
  		Test 5 & 10 & -1.5 & 1050 & 0.015 & - & 100 & $0.00015^{2}\pi$ & 0 & 0.3 & 100 \\ 
  		\hline 
  		Accuracy test & & & & & & & & & &\\
  		\hline
  		Test 6 & 0.5 & 0 & 1050 & 5 & - & - & - & - & 0.4 & - \\ 
  		\hline 
  		Riemann problems & & & & & & & & & &\\
  		\hline 
        Test 7 & 10 & -1.5 & 1050 & 0.5 & 0.5L & 10 & 1.0E-4 & 66.661 & 0.02 & 1000 \\ 
  		\hline
  	\end{tabular} 
	  	\caption{Parameters of numerical tests: $m [-]$, $n[-]$, $\rho[{Kg}/{m}^{3}]$, $L[{m}]$, $x_{g}[{m}]$, $K_{{ref}}[{Pa}]$, $A_{0,{ref}}[{m}^{2}]$, $p_{e,{ref}}[{Pa}]$ $t_{{end}}[{s}]$, $N[-]$.}

  	\label{tab:TestsParameters}
\end{table}

\begin{table}[!h]
    \scriptsize
  	\centering
  	\begin{tabular}{|c|c|c|c|c|c|c|}
  		\hline 
  		\textbf{Left values} & $A_{l}$ & $q_l$ & $K_l$ & $A_{0,l}$ & $p_{e,l}$\\
  		\hline
  		Well-balanced tests & & & & &\\
  		\hline 
  		Test 1 & $A_{0,\text{ref}}$ & 4E-10 & $K_{\text{ref}}$ & $A_{0,\text{ref}}$ & 0  \\ 
  		\hline 
  		Test 2 & $A_{0,\text{ref}}$ & 4E-6 & $K_{\text{ref}}$ & $A_{0,\text{ref}}$ & 0  \\
        \hline
        Test 3 & $A_{0,\text{ref}}$ & 4E-10 & - & - & -  \\
  		\hline
        Test 4 & $A_{0,\text{ref}}$ & 4E-10 & - & - & -  \\
  		\hline
  		Test 5 & - & - & - & - & -  \\
  		\hline
  		Accuracy test & & & & & \\
  		\hline
  		Test 6 & 2.0456$A_{0,l}$ & 0 & 58725 & 5E-4 & 10000
  		\\
        \hline
  		Riemann problems & & & & & \\
  		\hline
  		Test 7 & 1.5$A_{0,l}$ & 1.8 & $5K_{\text{ref}}$ & $1.1A_{0,\text{ref}}$ & $p_{e,\text{ref}}$
        \\
  		\hline 
  		\hline 
  		\textbf{Right values} & $A_{r}$ & $q_r$ & $K_r$ & $A_{0,r}$ & $p_{e,r}$\\
  		\hline
  		Well-balanced tests & & & & &\\
  		\hline 
  		Test 1 & - & - & $0.98K_{\text{ref}}$ & $0.98A_{0,\text{ref}}$ & 0  \\ 
  		\hline 
  		Test 2 & - & - & $0.98K_{\text{ref}}$ & $0.98A_{0,\text{ref}}$ & 0  \\
  		\hline
        Test 3 & - & - & - & - & -  \\
  		\hline 
        Test 4 & - & - & - & - & -  \\
  		\hline 
  		Test 5 & - & - & - & - & -  \\
  		\hline
  		Accuracy test & & & & & \\
  		\hline
  		Test 6 & - & - & - & - & -
  		\\
        \hline
        Riemann problems & & & & & \\
  		\hline
  		Test 7 & 1.1$A_{0,r}$ & 2.0 & $50K_{\text{ref}}$ & $1.2A_{0,\text{ref}}$ & $0.1p_{e,\text{ref}}$
        \\
  		\hline
  	\end{tabular} 
	  	\caption{Left-right initial values of Riemann problems: $A_{l,r} [{m}^{2}]$, $u_{l,r}[{m/s}]$, $K_{l,r}[{Pa}]$, $A_{0,l,r}[{m}^2]$, $p_{e,l,r}[{Pa}]$.}

  	\label{tab:TestInitialValues}
\end{table}

\subsubsection{Well-balanced property}

The aim of this subsection is to verify the well-balanced property of the provided numerical schemes. We consider an $N$-point uniform mesh for the spatial interval $[0, L]$ where we impose the given stationary solution as boundary conditions. For Tests 1-2, we will investigate discontinuous stationary solutions characterized by the following discontinuous coefficients:

\begin{equation}\label{eq:K_A0_pe_discontinuous}
K(x) = \begin{cases}
    K_{l} & \text{if $x<x_{g}$},\\
    K_{r} & \text{if $x\geq x_{g}$},
\end{cases} 
\ \ A_{0}(x) = \begin{cases}
    A_{0,l} & \text{if $x<x_{g}$},\\
    A_{0,r} & \text{if $x\geq x_{g}$},
    \end{cases} 
    \ \ p_{e,0}(x) = \begin{cases}
    p_{e,l} & \text{if $x<x_{g}$},\\
p_{e,r} & \text{if $x\geq x_{g}$},
\end{cases}
\end{equation}
where the left and the right values are given in Table \ref{tab:TestInitialValues}. The value of $m$, $n$, $\rho$, $L$, $x_{g}$, $K_{\text{ref}}$, $A_{0,\text{ref}}$, $p_{e,\text{ref}}$, $t_{\text{end}}$ and $N$, are given in Table \ref{tab:TestsParameters}. In all well-balanced test cases we consider the following gravity function:
$$g(x) = g_{\text{ref}} - \frac{x}{2\,L}g_{\text{ref}},$$
being $g_{\text{ref}} = 9.81 (m/s^2)$.

\subsubsection*{Test 1: Discontinuous subcritical stationary solution}

Our initial condition is given by a subcritical stationary solution $\mathbf{W}_{0}(x)= (A(x, 0), q(x, 0), K(x), A_{0}(x), p_{e}(x))$ such that:
$$A(0, 0) = A_{l}, \ \ q(0, 0) = q_{l},$$
where $A_{l}$ and $q_{l}$ are given in Table \ref{tab:TestInitialValues} and $K(x)$, $A_{0}(x)$, $p_{e}(x)$ are given by \eqref{eq:K_A0_pe_discontinuous}. This \textit{discrete} stationary solution is constructed with the same collocation methods used for each method, i.e., the second-order 1-stage RK collocation method \eqref{RK2_collocation} for the first- and second-order ones and the fourth-order 2-stage Gauss-Legendre collocation RK method \eqref{RK4_collocation} for the third-order one. In Table \ref{tab:Test1Errors} we observe that the fully well-balanced methods are the only ones that preserve the stationary solution.

\begin{table}[ht]
    \small
  	\centering
  	\begin{tabular}{|c|c|c|}
  		\hline 
  		Scheme & $||\Delta A/A_0||_1$& $||\Delta u||_1$\\
  		\hline 
  		O1\_WB\_GHR\_HLL & 4.66E-19 & 3.32E-18\\
  		O2\_WB\_GHR\_HLL& 1.05E-18 & 2.51E-18 \\
  		O3\_WB\_GHR\_HLL& 1.00E-17 & 8.95E-18 \\
  		O1\_noWB\_GHR\_HLL & 1.23E-05 & 3.12E-06\\
  		O2\_noWB\_GHR\_HLL& 5.68E-06 & 1.49E-06 \\
  		O3\_noWB\_GHR\_HLL& 4.70E-06 & 1.21E-06 \\
  		
  		\hline 
  	\end{tabular} 
	  	\caption{Test 1:  $L^{1}$ errors at  time $t = 1\,s$. }

  	\label{tab:Test1Errors}
\end{table}

\subsubsection*{Test 2: Discontinuous supercritical stationary solution}

The initial condition is given in this case by a supercritical stationary solution $$\mathbf{W}_{0}(x)= (A(x, 0), q(x, 0), K(x), A_{0}(x), p_{e}(x))$$ such that:
$$A(0, 0) = A_{l}, \ \ q(0, 0) = q_{l},$$
where $A_{l}$ and $q_{l}$ are given in Table \ref{tab:TestInitialValues} and $K(x)$, $A_{0}(x)$, $p_{e}(x)$ are given by \eqref{eq:K_A0_pe_discontinuous}. The only difference with respect the previous test is the value of $q_{l}$. This \textit{discrete} stationary solution is constructed again with the same collocation methods used for each method. As for the previous test, in Table \ref{tab:Test2Errors} we observe that the fully well-balanced methods are the only ones that preserve the stationary solution.

\begin{table}[ht]
    \small
  	\centering
  	\begin{tabular}{|c|c|c|}
  		\hline 
  		Scheme & $||\Delta A/A_0||_1$& $||\Delta u||_1$\\
  		\hline 
  		O1\_WB\_GHR\_HLL & 1.75E-16 & 9.81E-16\\
  		O2\_WB\_GHR\_HLL& 1.61E-16 & 9.81E-16 \\
  		O3\_WB\_GHR\_HLL& 4.55E-17 & 6.54E-15 \\
  		O1\_noWB\_GHR\_HLL & 2.98E-05 & 1.15E-03\\
  		O2\_noWB\_GHR\_HLL& 1.10E-06 & 4.30E-05 \\
  		O3\_noWB\_GHR\_HLL& 1.19E-06 & 4.38E-05 \\
  		
  		\hline 
  	\end{tabular} 
	  	\caption{Test 2:  $L^{1}$ errors at  time $t = 0.5\,s$. }

  	\label{tab:Test2Errors}
\end{table}

\subsubsection*{Test 3: Smooth subcritical stationary solution}

In this test we consider as initial condition a subcritical stationary solution $$\mathbf{W}_{0}(x)= (A(x, 0), q(x, 0), K(x), A_{0}(x), p_{e}(x))$$ such that:
$$A(0, 0) = A_{s}, \ \ q(0, 0) = q_{s},$$
where 
$$ A_{s}=A_{0,\text{ref}}, \ \ q_{s} = 4\cdot 10^{-10}\,m^3/s\,,$$ and $K(x)$, $A_{0}(x)$, $p_{e}(x)$ are given by 

$$K(x)=K_{\text{ref}} + 0.01 x\frac{ K_{\text{ref}}}{L}, \ \  A_{0}(x)=A_{0,\text{ref}} + 0.01 x\frac{ A_{0,\text{ref}}}{L},\ \ p_{e}(x)=p_{e,\text{ref}} + 0.01 x\frac{ p_{e,\text{ref}}}{L},$$
where $L$, $K_{\text{ref}}$, $A_{0,\text{ref}}$ and $p_{e,\text{ref}}$ can be found in Table \ref{tab:TestsParameters}. A \textit{discrete} stationary solution is constructed again with the same collocation methods used for each method. We observe in Table \ref{tab:Test3Errors} that only the well-balanced methods are able to preserve the stationary solution.

\begin{table}[ht]
    \small
  	\centering
  	\begin{tabular}{|c|c|c|}
  		\hline 
  		Scheme & $||\Delta A/A_0||_1$& $||\Delta u||_1$\\
  		\hline 
  		O1\_WB\_GHR\_HLL & 1.10E-18 & 1.82E-17\\
  		O2\_WB\_GHR\_HLL& 1.85E-18 & 9.72E-17 \\
  		O3\_WB\_GHR\_HLL& 2.08E-16 & 1.49E-16 \\
  		O1\_noWB\_GHR\_HLL & 3.55E-07  & 5.07E-07\\
  		O2\_noWB\_GHR\_HLL& 3.55E-06 & 2.58E-05 \\
  		O3\_noWB\_GHR\_HLL& 1.89E-08 & 6.21E-08 \\
  		
  		\hline 
  	\end{tabular} 
	  	\caption{Test 3:  $L^{1}$ errors at  time $t = 1\,s$. }

  	\label{tab:Test3Errors}
\end{table}

\subsubsection*{Test 4: Perturbed smooth subcritical stationary solution}

The initial condition is given by:
\begin{equation}
\tb{W}_{0}^{P}(x) = \tb{W}_{0}(x) + \tb{\delta}(x),
\end{equation}
where $\tb{W}_{0}(x)$ is the stationary solution defined in Test 3, $K(x)$, $A_{0}(x)$ and $ p_e(x)$ are also given as in Test 3 and
$$
\tb{\delta}(x) = [10^{-10}e^{-2000000(x-0.0075)^{2}},0,0,0,0]^T.
$$
This test is devoted to show the behaviour of the different methods when a perturbation is posed into a stationary solution. In Figures \ref{fig:Test4_WB_vs_noWB_A/A0}, \ref{fig:Test4_WB_vs_noWB_u} we show the numerical solution of first-, second- and third-order well-balanced and non well-balanced methods at different times together with the computed stationary solution given in Test 3. We observe that only the well-balanced methods are able to recover the unperturbed stationary solution. 

\begin{figure}[h]
	\begin{subfigure}[h]{0.49\textwidth}
		\centering
		\includegraphics[width=1\linewidth]{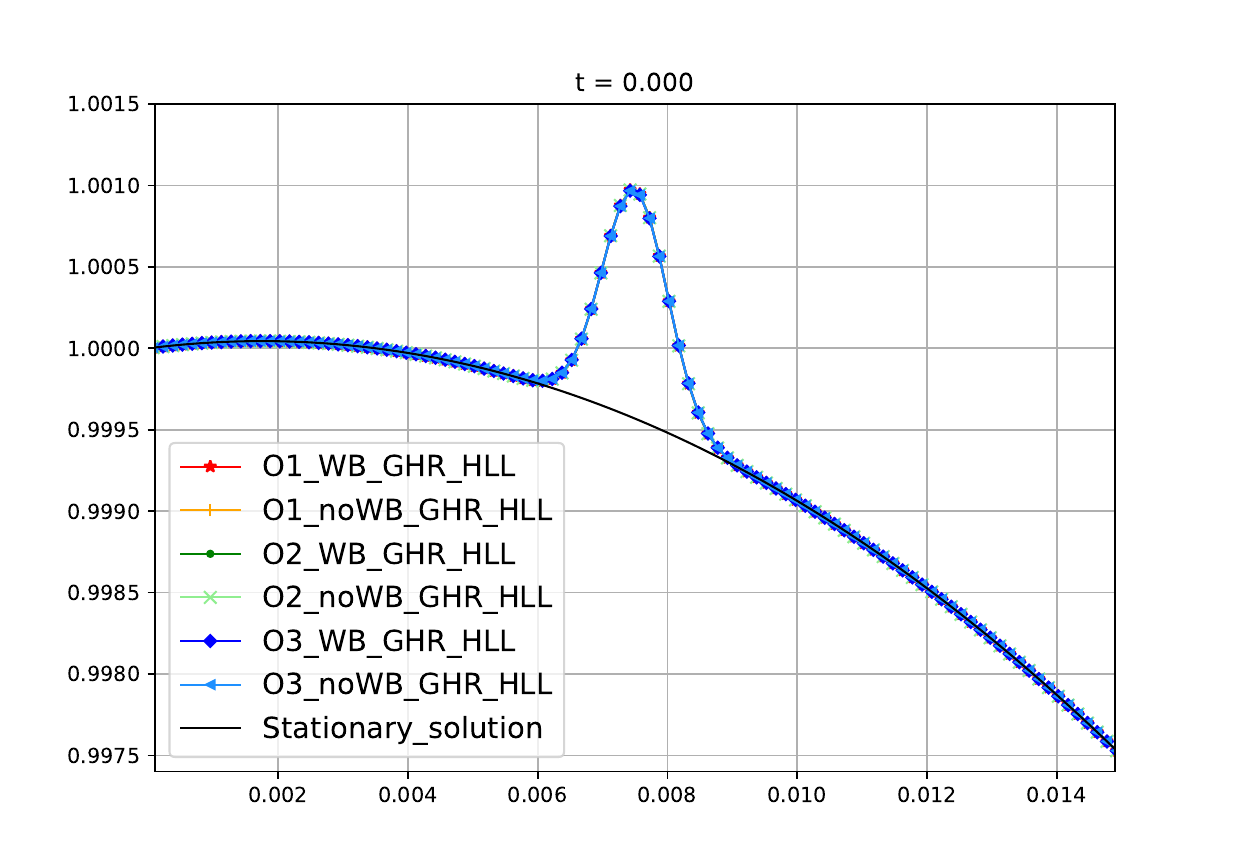}
		\caption*{Variable $A/A_{0}$}
		\label{fig:Test4_WB_vs_noWB_t_0_AA0}
	\end{subfigure}
	\begin{subfigure}[h]{0.49\textwidth}
		\centering
		\includegraphics[width=1\linewidth]{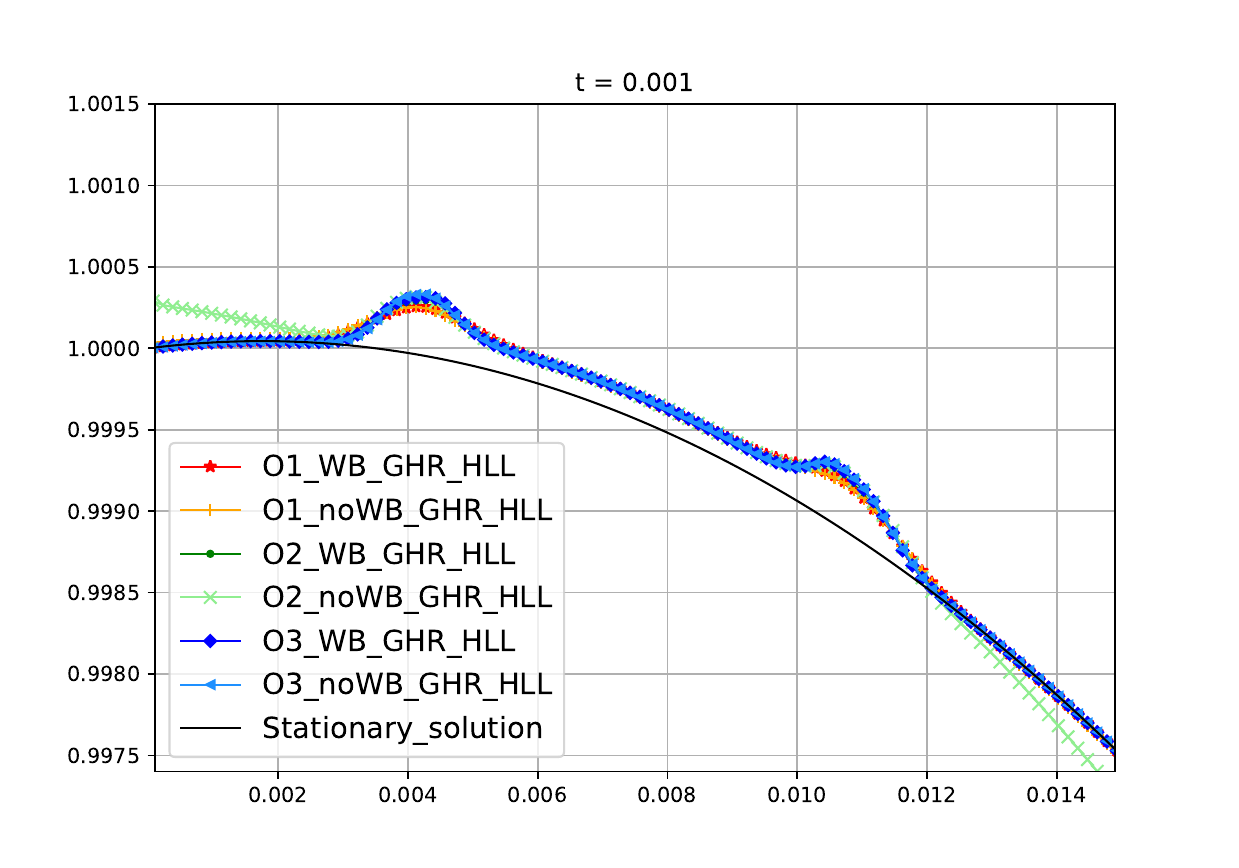}
		\caption*{Variable $A/A_{0}$}
		\label{fig:Test4_WB_vs_noWB_t_0001_AA0}
	\end{subfigure}
    \begin{subfigure}[h]{0.49\textwidth}
		\centering
		\includegraphics[width=1\linewidth]{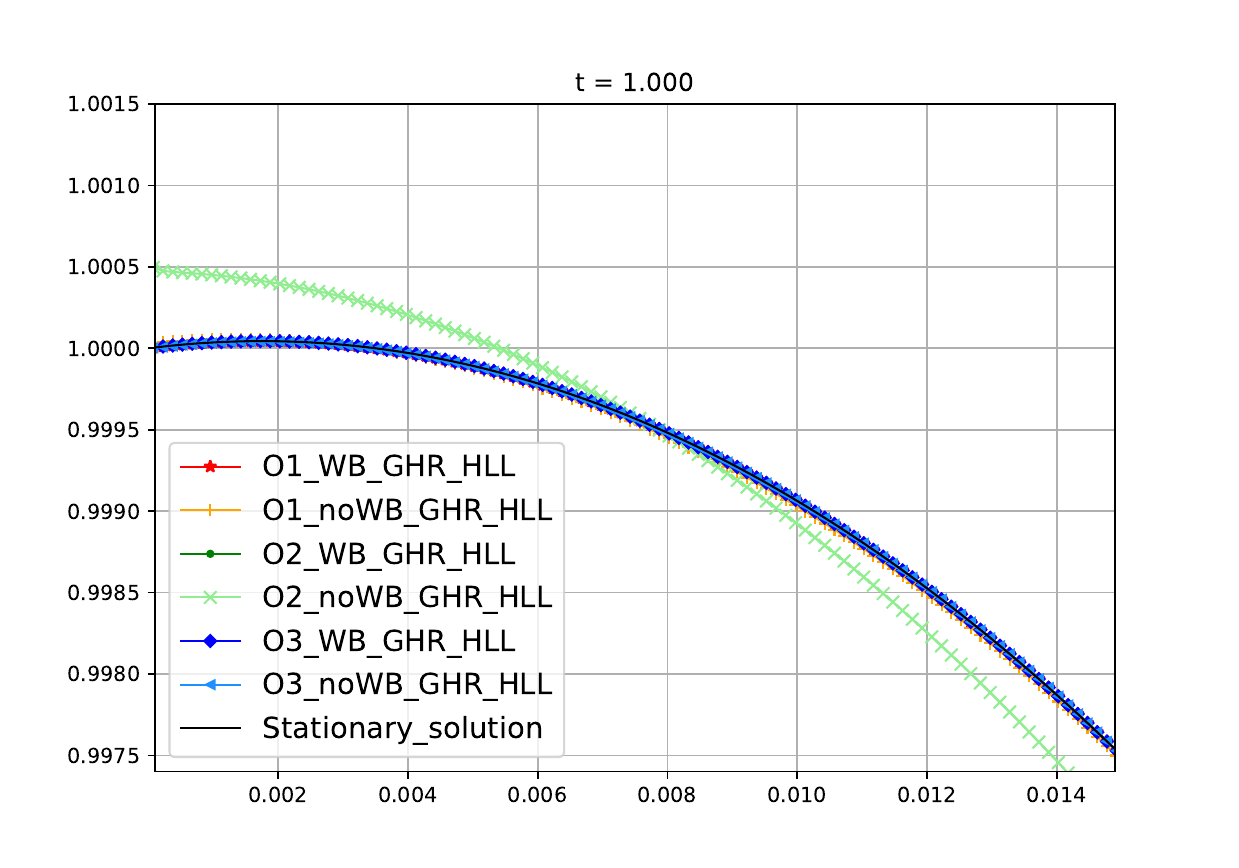}
		\caption*{Variable $A/A_{0}$}
		\label{fig:Test4_WB_vs_noWB_t_1_AA0}
	\end{subfigure}
	\begin{subfigure}[h]{0.49\textwidth}
		\centering
		\includegraphics[width=1\linewidth]{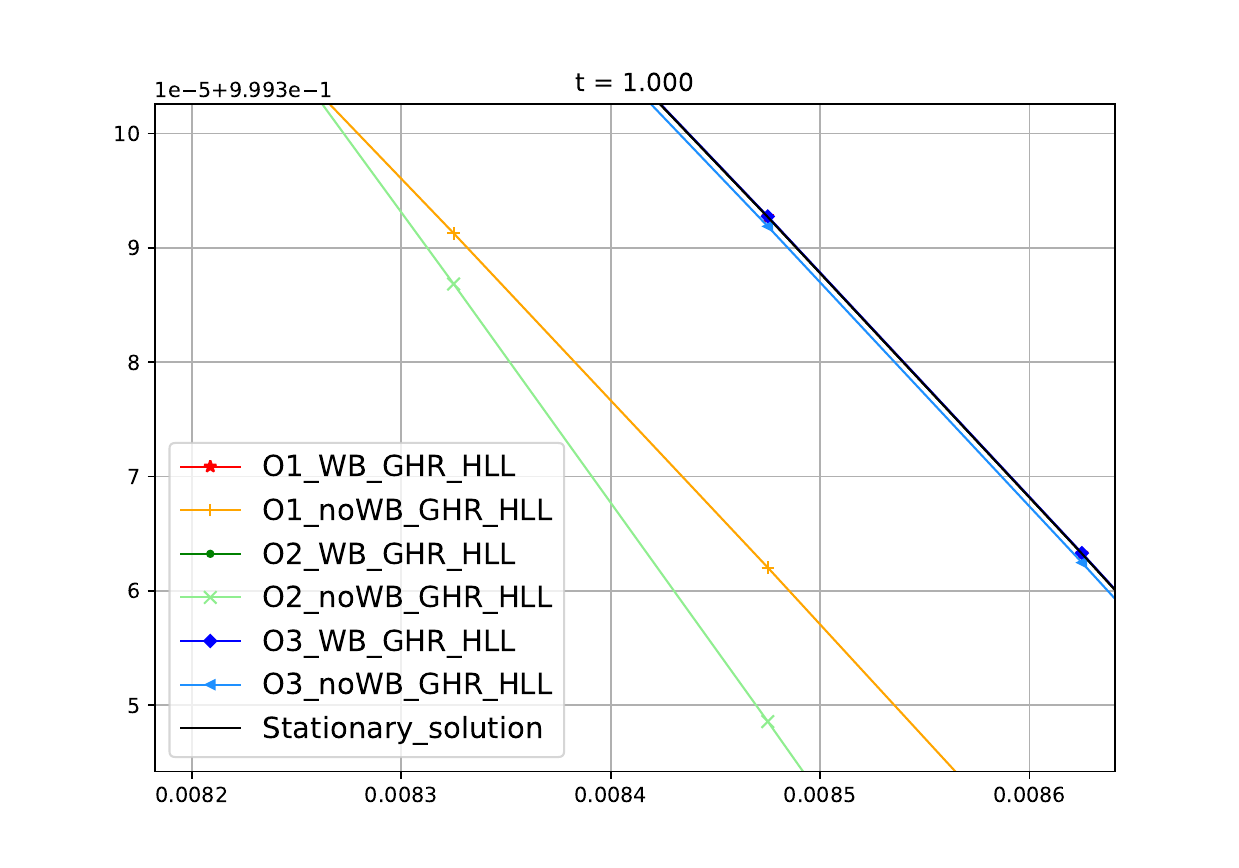}
		\caption*{Variable $A/A_{0}$. Zoom}
		\label{fig:Test4_WB_vs_noWB_t_1_AA0_zoom}
	\end{subfigure}
	\caption{Test 4: numerical solution of first-, second-, third-order well-balanced and non well-balanced methods at different times. Variable $A/A_0$.}
	\label{fig:Test4_WB_vs_noWB_A/A0}
\end{figure}

\begin{figure}[h]
	\begin{subfigure}[h]{0.49\textwidth}
		\centering
		\includegraphics[width=1\linewidth]{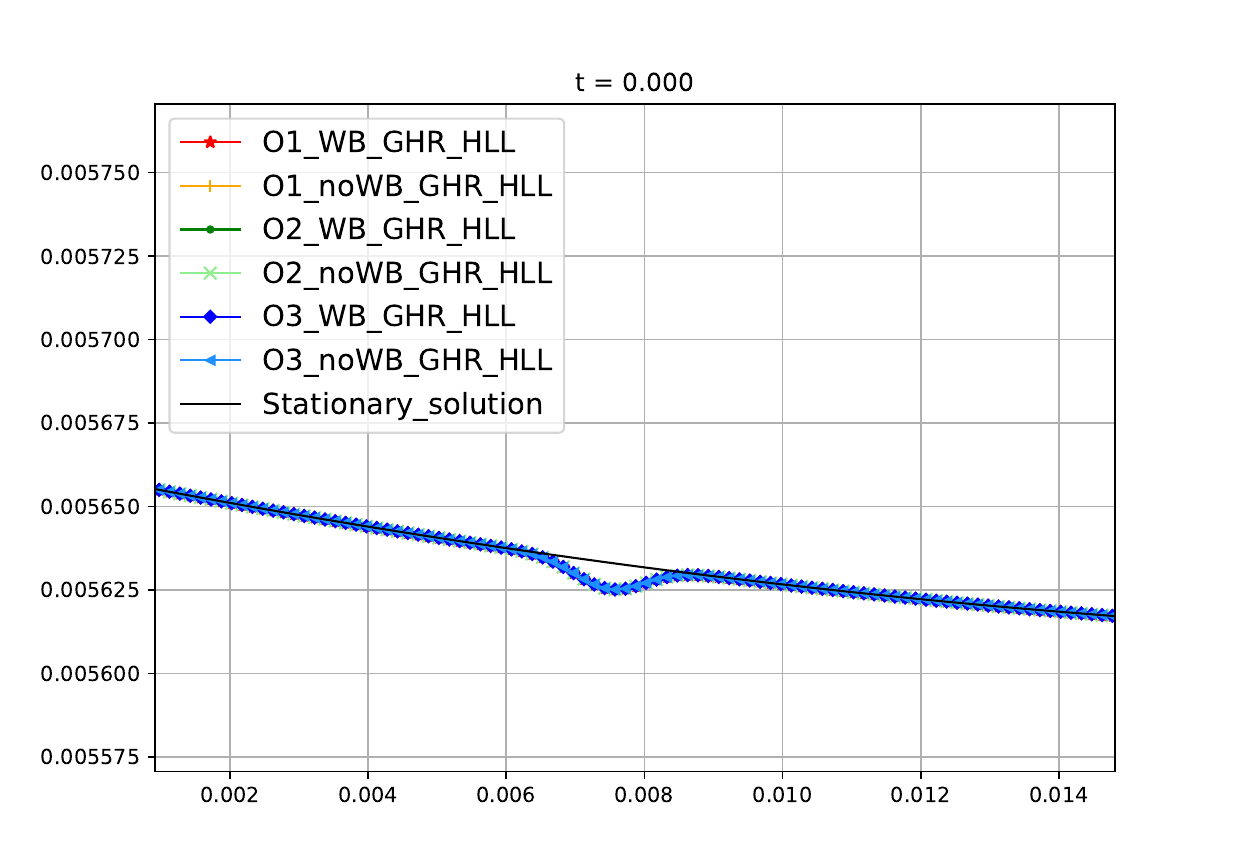}
		\caption*{Variable $u$}
		\label{fig:Test4_WB_vs_noWB_t_0_u}
	\end{subfigure}
	\begin{subfigure}[h]{0.49\textwidth}
		\centering
		\includegraphics[width=1\linewidth]{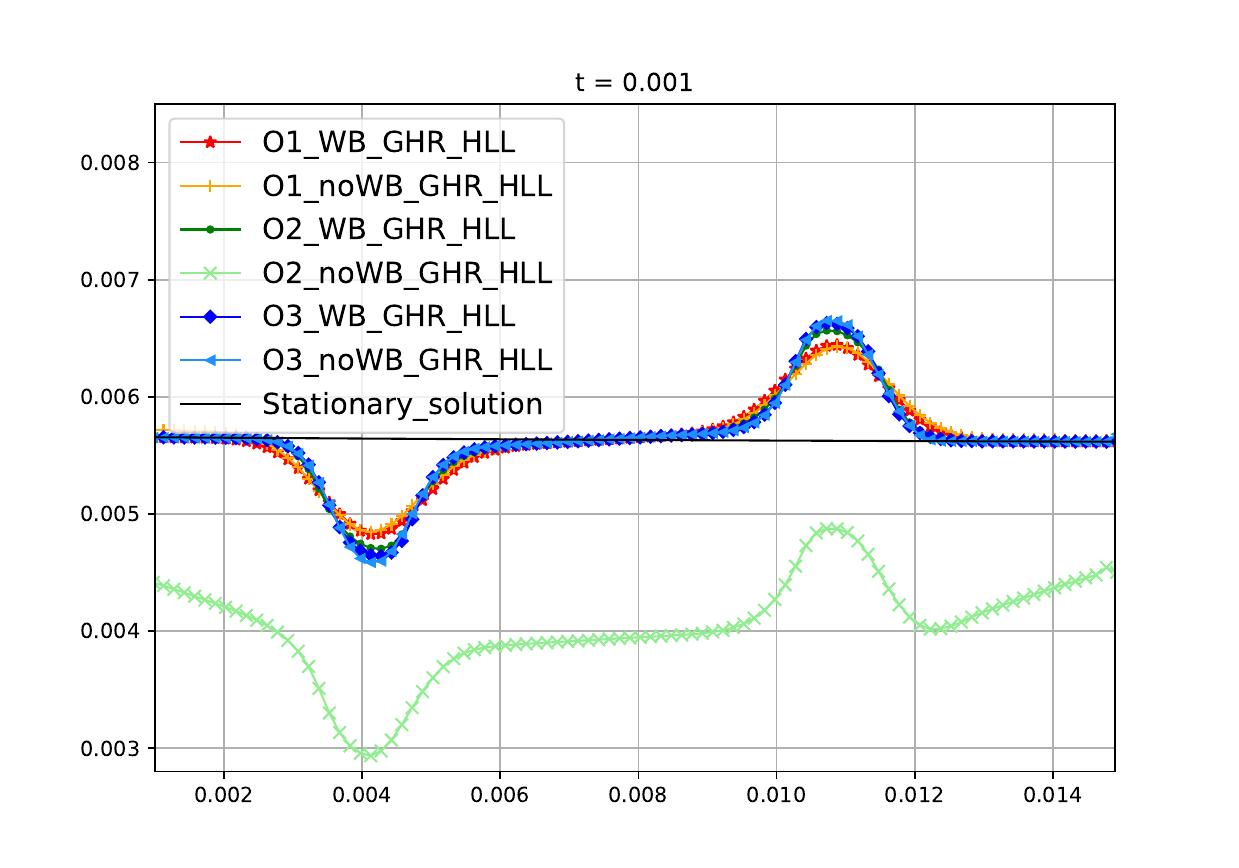}
		\caption*{Variable $u$}
		\label{fig:Test4_WB_vs_noWB_t_0001_u}
	\end{subfigure}
    \begin{subfigure}[h]{0.49\textwidth}
		\centering
		\includegraphics[width=1\linewidth]{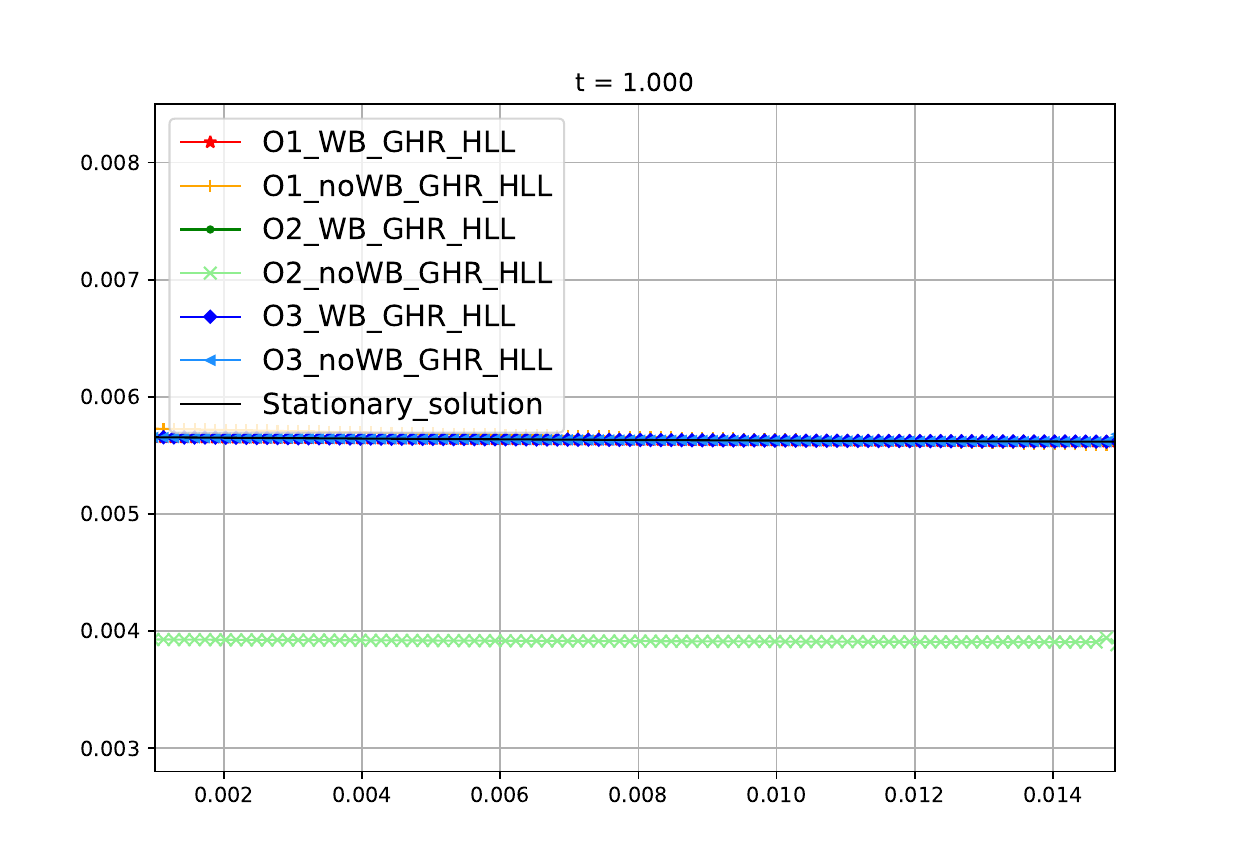}
		\caption*{Variable $u$}
		\label{fig:Test4_WB_vs_noWB_t_1_u}
	\end{subfigure}
	\begin{subfigure}[h]{0.49\textwidth}
		\centering
		\includegraphics[width=1\linewidth]{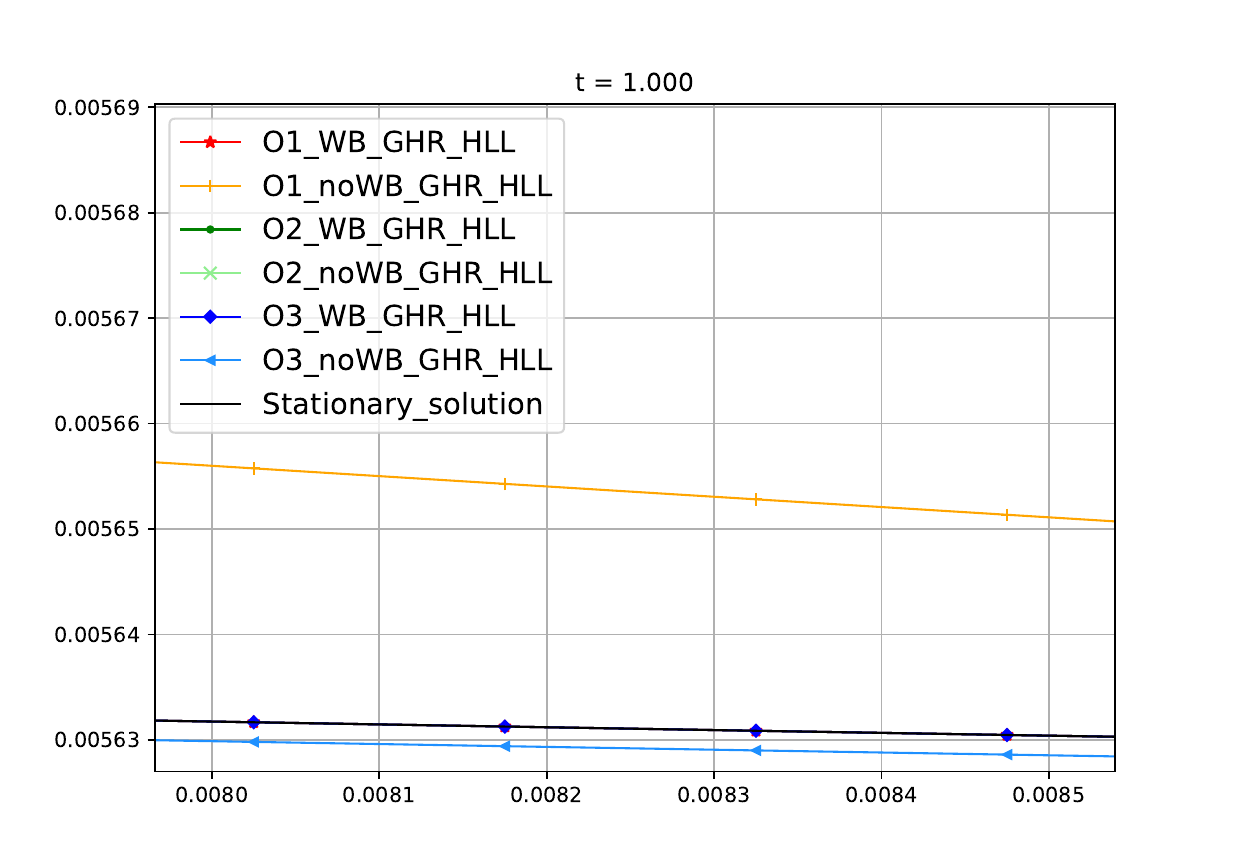}
		\caption*{Variable $u$. Zoom}
		\label{fig:Test4_WB_vs_noWB_t_1_u_zoom}
	\end{subfigure}
	\caption{Test 4: numerical solution of first-, second-, third-order well-balanced and non well-balanced methods at different times. Variable $u$.}
	\label{fig:Test4_WB_vs_noWB_u}
\end{figure}

\subsubsection*{Test 5: Smooth transcritical stationary solution}

In this test we consider as initial condition a transcritical stationary solution $$\mathbf{W}_{0}(x)= (A(x, 0), q(x, 0), K(x), A_{0}(x), p_{e}(x))$$ such that:
$$A(x_{c}, 0) = A_{c}, \ \ q(x_{c}, 0) = q_{c},$$
where 
$$x_{c} = L/2 + dx/2, \ \ A_{c}=1.1A_{0}(x_{c}), \ \ q_{c} = 1.25232 \cdot 10^{-7}\,m^3/s,$$ and $K(x)$, $A_{0}(x)$, $p_{e}(x)$ are given by 

$$K(x)=K_{\text{ref}}e^{-4log(1/1.1)(x-x_{c})^{2}}, \ \  A_{0}(x)=A_{0,\text{ref}}e^{-4log(1/1.1)(x-x_{c})^{2}},\ \ p_{e}(x)=-x\left(\gamma\pi\mu \frac{q_{c}}{A_{c}^{2}}-g_cA_c\right),$$
where $K_{\text{ref}}$ and $A_{0,\text{ref}}$ can be found in Table \ref{tab:TestsParameters} and $g_c = g(x_c)= 7.332975$. It can be proven that $W_{0}(x_{c})$ is a critical state and verifies \eqref{eq:critical_relation}. From this critical state we construct the transcritical stationary solution in the domain $[0,L]$ using again the collocation methods seen in the previous sections. In this case we only show the results of the first- and second-order methods. We observe in Figure \ref{fig:Test5_WB_vs_noWB} that only the well-balanced methods are able to preserve the transcritical stationary solution while the non well-balanced schemes converge to a different solution.

\begin{figure}[h]
	\begin{subfigure}[h]{0.49\textwidth}
		\centering
		\includegraphics[width=1\linewidth]{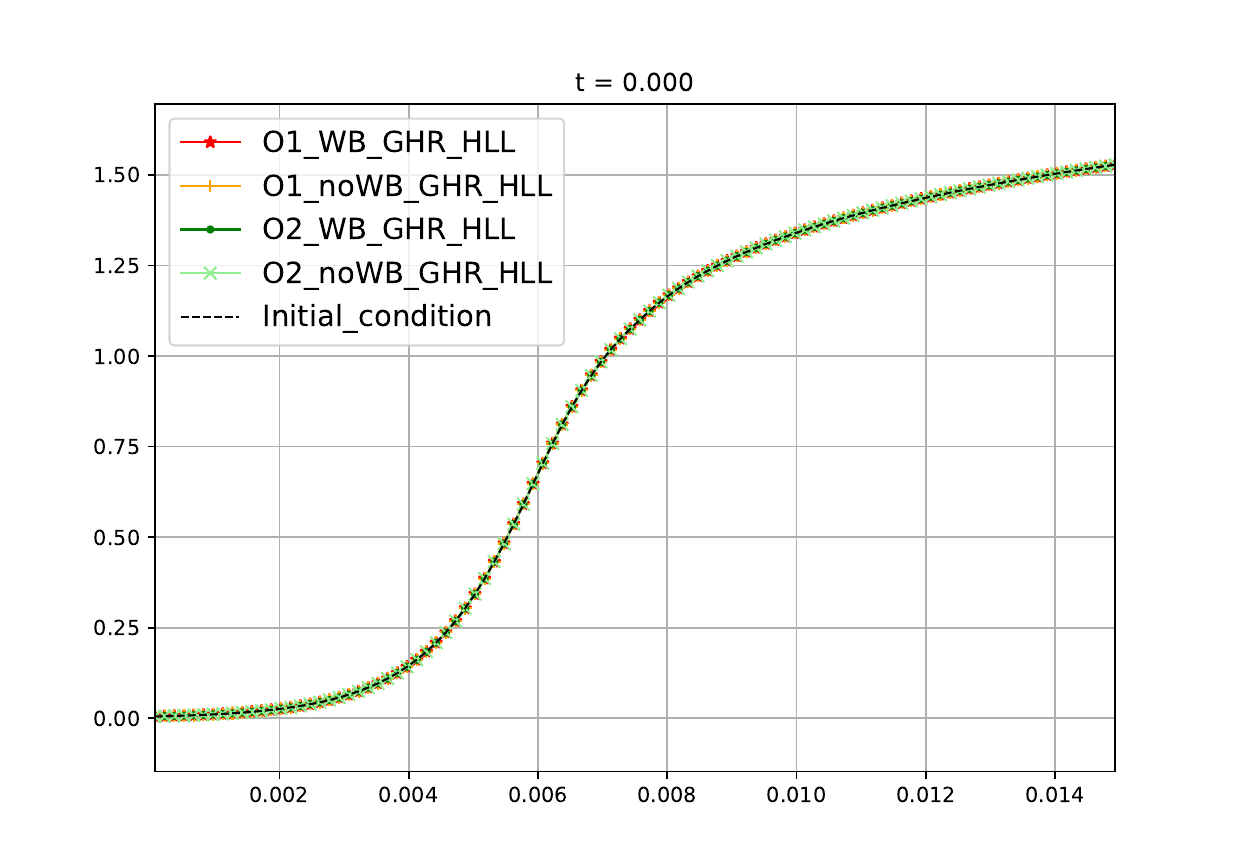}
		\caption*{Variable $A/A_{0}$}
		\label{fig:Test5_WB_vs_noWB_t_0_AA0}
	\end{subfigure}
	\begin{subfigure}[h]{0.49\textwidth}
		\centering
		\includegraphics[width=1\linewidth]{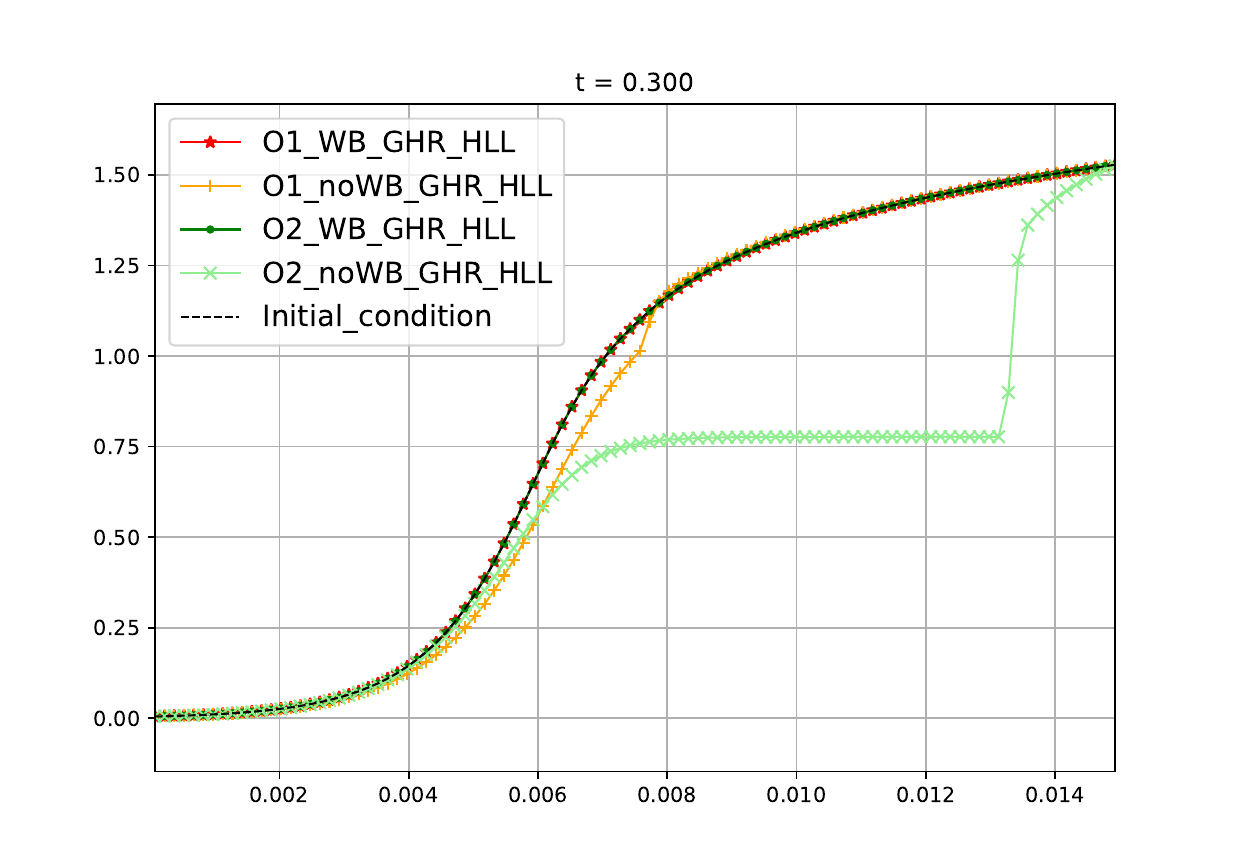}
		\caption*{Variable $A/A_{0}$}
		\label{fig:Test5_WB_vs_noWB_t_003_AA0}
	\end{subfigure}
	\begin{subfigure}[h]{0.32\textwidth}
		\centering
		\includegraphics[width=1.1\linewidth]{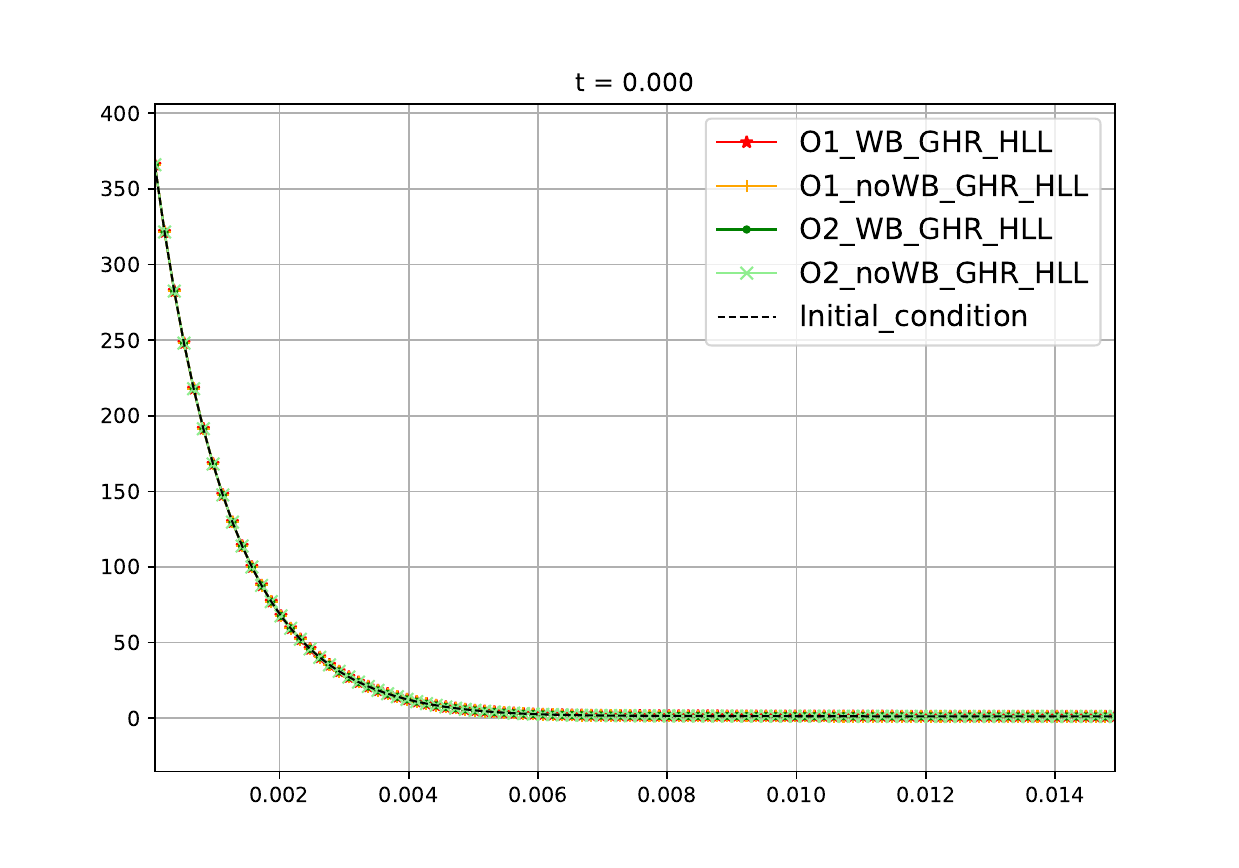}
		\caption*{Variable $u$.}
		\label{fig:Test5_WB_vs_noWB_t_0_u}
	\end{subfigure}
    \begin{subfigure}[h]{0.32\textwidth}
		\centering
		\includegraphics[width=1.1\linewidth]{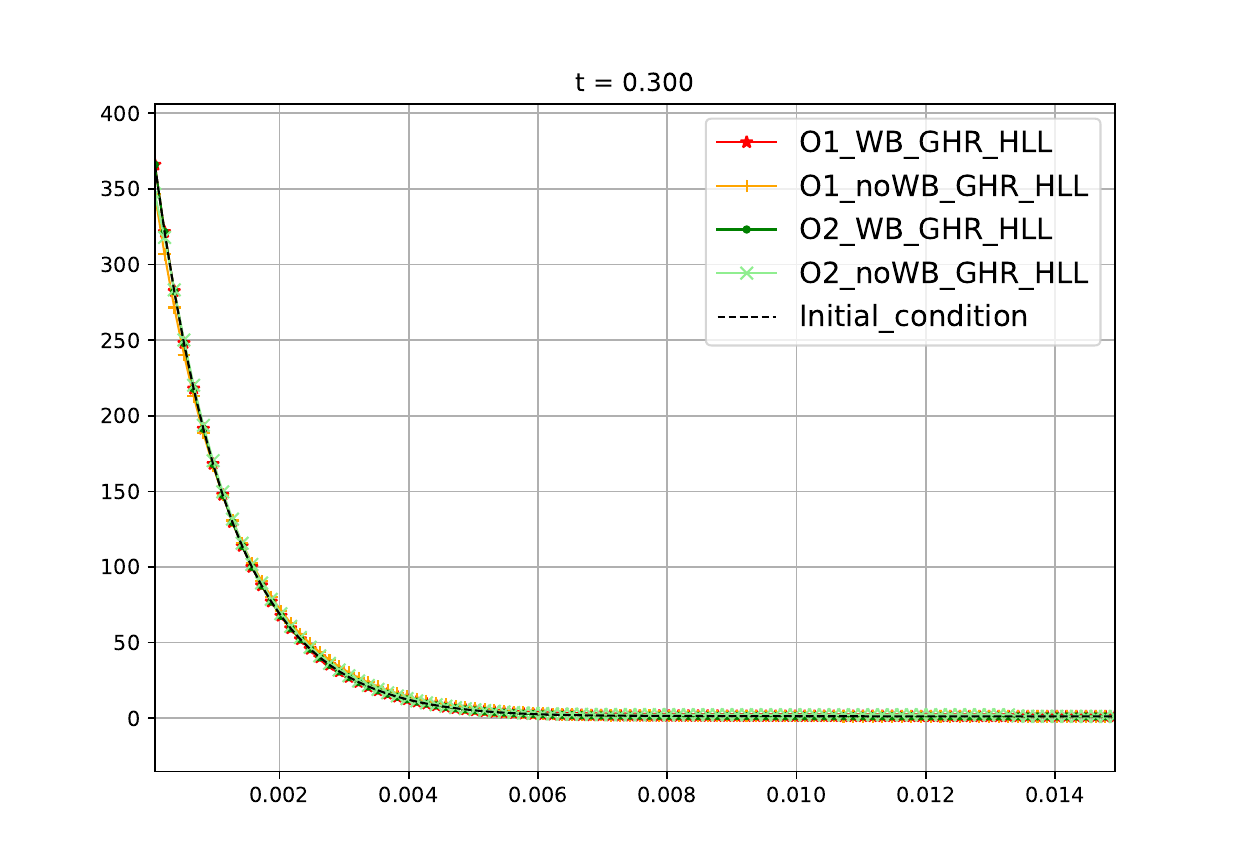}
		\caption*{Variable $u$.}
		\label{fig:Test5_WB_vs_noWB_t_003_u}
	\end{subfigure}
	\begin{subfigure}[h]{0.32\textwidth}
		\centering
		\includegraphics[width=1.1\linewidth]{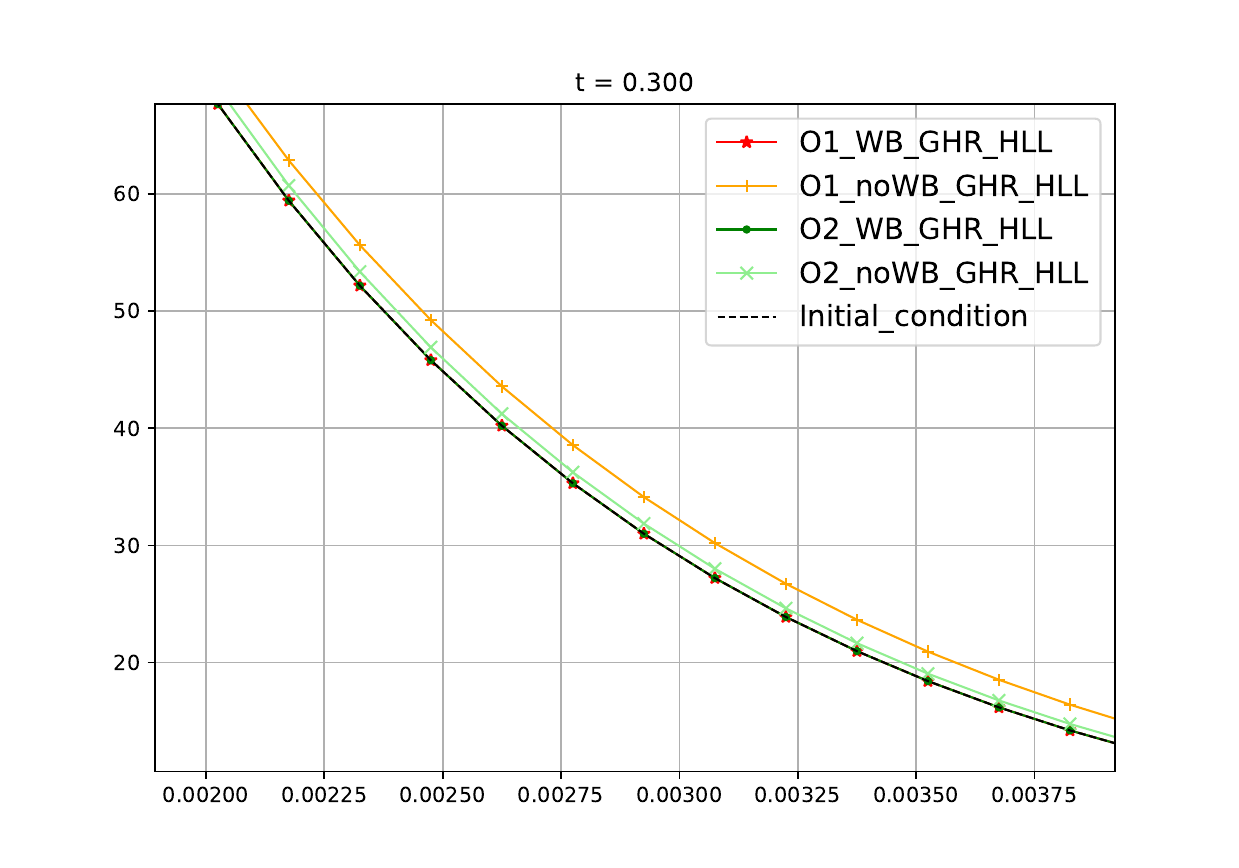}
		\caption*{Variable $u$. Zoom}
		\label{fig:Test5_WB_vs_noWB_t_003_u_zoom}
	\end{subfigure}
	\caption{Test 5: numerical solution of first-, second-order well-balanced and non well-balanced methods at different times. Top: Variable $A/A_{0}$. Bottom: Variable $u$.}
	\label{fig:Test5_WB_vs_noWB}
\end{figure}

\subsubsection{Accuracy test}

This subsection is devoted to show that the order of accuracy of the methods is reached.

\paragraph{Test 6: Perturbed smooth subcritical stationary solution}

This test is based on Test 6 in \cite{pimentelgarcia2023high}, with the inclusion of the friction term. The initial condition is given by:
\begin{equation}
\tb{W}_{0}^{P}(x) = \tb{W}_{0}(x) + \tb{\delta}(x),
\end{equation}
where $\tb{W}_{0}(x)$ is the stationary solution defined by 
$$A^{*}(0) = A_{l}, \quad q^{*}= 0,$$
and
$$K(x) = K_{l} + 100e^{-10(x-2.5)^2}, \ \ A_{0}(x) = A_{0,l} + 0.0001e^{-10(x-2.5)^2}, p_{e}(x) = p_{e,l} + 100e^{-10(x-2.5)^2},$$
where $A_{l}, K_{l}, A_{0,l}, p_{e,l}$ are given in Tables \ref{tab:TestsParameters}, \ref{tab:TestInitialValues}, and
$$
\tb{\delta}(x) = [10^{-6}e^{-40(x-1)^{2}},0,0,0,0]^T.
$$
In order to compute the errors and check the order of the first-, second- and third-order fully well-balanced schemes we use 200-, 400-, 800- and 1600-point uniform meshes. The third-order fully well-balanced method was used to calculate the reference solution, utilizing a 3200-point uniform grid. Table \ref{tab:Error_TestOrder} presents the obtained results, from which we deduce that the expected order of accuracy is achieved in all three cases.

\begin{table}[!h]
  	\centering
  	\begin{tabular}{|c|c|c|c|c|}
  	    \hline
         \multicolumn{5}{|c|}{First-order} \\
 \hline
  		Number of cells & $||\Delta A||_1$ & Order & $||\Delta u||_1$ & Order \\  
  		\hline 
   100 & 3.42e-07 & - & 7.21e-04 & - \\
   \hline
   200 & 1.39e-07 & 1.30 & 5.18e-04 & 0.48 \\
   \hline
   400 & 6.80e-08 & 1.03 & 3.40e-04 & 0.61 \\
   \hline
   800 & 3.62e-08 & 0.91 & 2.05e-04 & 0.73 \\
   \hline
  1600 & 1.93e-08 & 0.91 & 1.15e-04 & 0.83 \\
  		\hline 
  		\hline
         \multicolumn{5}{|c|}{Second-order} \\
 \hline
  		Number of cells & $||\Delta A||_1$ & Order & $||\Delta u||_1$ & Order \\  
  		\hline 
   100 & 3.07e-07 & - & 4.85e-04 & - \\
   \hline
   200 & 8.68e-08 & 1.82 & 1.87e-04 & 1.38 \\
   \hline
   400 & 2.14e-08 & 2.02 & 4.47e-05 & 2.06 \\
   \hline
   800 & 5.40e-09 & 1.99 & 1.15e-05 & 1.97 \\
   \hline
  1600 & 1.36e-09 & 1.99 & 2.95e-06 & 1.96 \\
  		\hline
  		\hline
         \multicolumn{5}{|c|}{Third-order} \\
 \hline
  		Number of cells & $||\Delta A||_1$ & Order & $||\Delta u||_1$ & Order \\  
  		\hline 
   100 & 3.52e-08 & - & 2.16e-04 & - \\
   \hline
   200 & 8.76e-09 & 2.01 & 5.41e-05 & 2.00 \\
   \hline
   400 & 1.05e-09 & 3.06 & 6.47e-06 & 3.07 \\
   \hline
   800 & 1.34e-10 & 2.97 & 8.25e-07 & 2.97 \\
   \hline
  1600 & 1.53e-11 & 3.13 & 9.42e-08 & 3.13 \\
  		\hline
  	\end{tabular} 
	  	\caption{Test 6: order of accuracy for the first-, second- and third-order fully well-balanced scheme: $L^{1}$ errors $||\Delta \cdot||_1$ at time $t=0.4$.}

  	\label{tab:Error_TestOrder}
\end{table}

\subsubsection{Riemann problem}

Let us consider the following initial condition:

\begin{equation}
\tb{W}_{0}(x) =
\begin{cases}
    [A_{l}, A_{l}u_{l}, K_{l}, A_{0,l}, p_{e,l} ]^{T} & \text{if} \ \ x<x_{g}, \\
    [A_{r}, A_{r}u_{r}, K_{r}, A_{0,r}, p_{e,r} ]^{T} & \text{if} \ \  x\geq x_{g}, 
\end{cases}
\end{equation}
where the left and the right values are given in Table \ref{tab:TestInitialValues}. The values of $m$, $n$, $\rho$, $L$, $x_{g}$, $K_{\text{ref}}$, $A_{0,\text{ref}}$, $p_{e,\text{ref}}$, $t_{\text{end}}$ and $N$ are given in Table \ref{tab:TestsParameters}. We consider the constant gravity function $g(x)=g_{\text{ref}} = 9.81\,m/s^2$. In this test we will compare the numerical solutions obtained with the different methods with a reference solution computed using the third-order well-balanced scheme with a 5000-point mesh. In this case we impose boundary conditions so that the left boundary state is given by $A \equiv A_l$ and solving this ODE $q_{t} = -\frac{\gamma \pi \mu}{\rho}\frac{q}{A_{l}}+g_{\text{ref}}\,A_l$ and the right boundary state is given by $A \equiv A_r$ and solving this ODE $q_{t} = -\frac{\gamma \pi \mu}{\rho}\frac{q}{A_{r}}+g_{\text{ref}}\,A_r$.

\paragraph{Test 7: Subcritical test case} The solution of this Riemann problem consists on a left-moving rarefaction wave, a stationary contact discontinuity and a right-moving shock wave. We observe in Figures \ref{fig:Test7_WB_vs_noWB_t_AA0}, \ref{fig:Test7_WB_vs_noWB_t_u} the numerical solution obtained with all methods compared with the reference solution at time $t=0.03\,s$. As expected, the higher the order of the method, the better the rarefaction is captured. The inset of the contact discontinuities shows the differences between well-balanced and non well-balanced methods. The only ones that are able to preserve the reference solution are the well-balanced ones. In order to justify that the well-balanced methods converge to the \textit{right} solution we observe Figure \ref{fig:Test7_WB_vs_noWB_t_Gamma} where we plot the energy, i.e., the function $\Gamma$ given by \eqref{Gamma}. We see that the well-balanced methods are the only ones able to preserve the energy in the contact discontinuity.

\begin{figure}[h]
	\begin{subfigure}[h]{0.32\textwidth}
		\centering
		\includegraphics[width=1\linewidth]{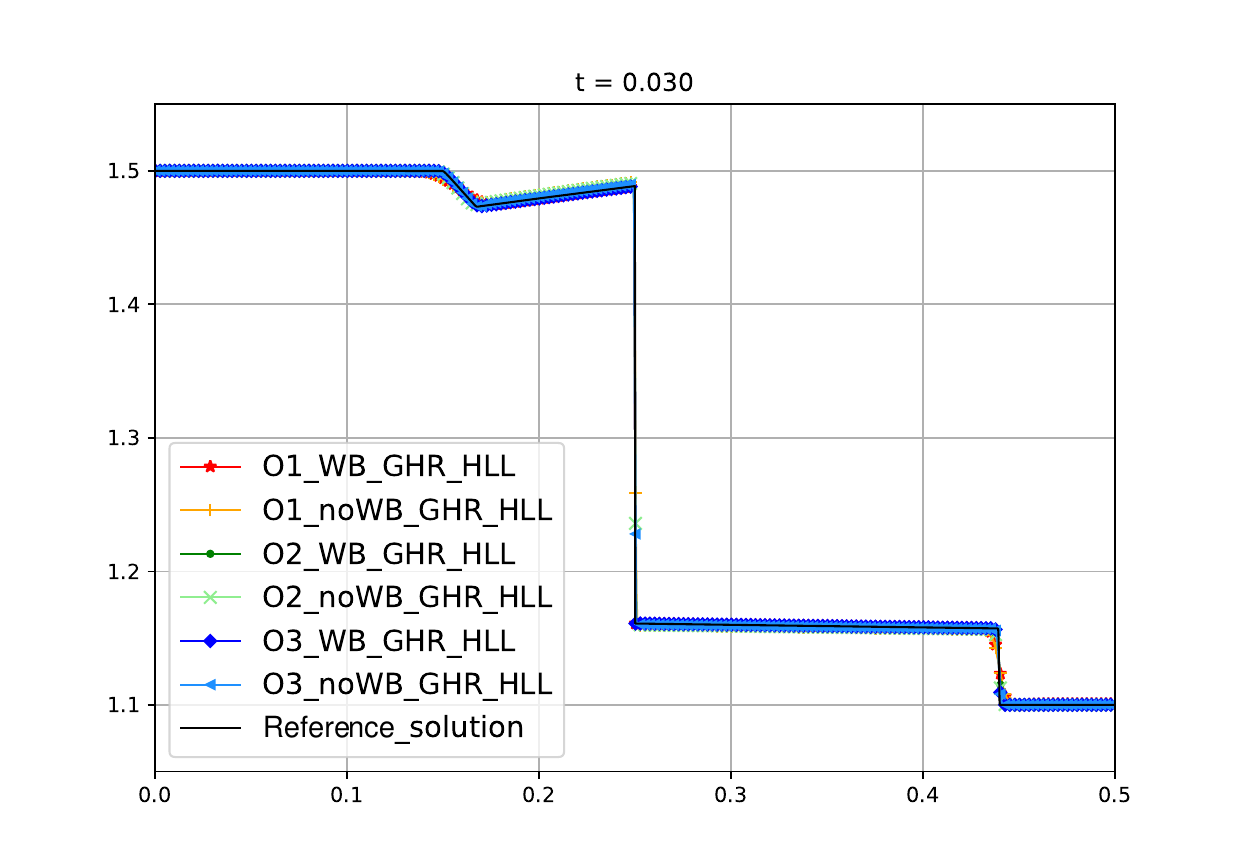}
		\caption*{Variable $A/A_{0}$}
		\label{fig:Test7_WB_vs_noWB_t_003_AA0}
	\end{subfigure}
	\begin{subfigure}[h]{0.32\textwidth}
		\centering
		\includegraphics[width=1\linewidth]{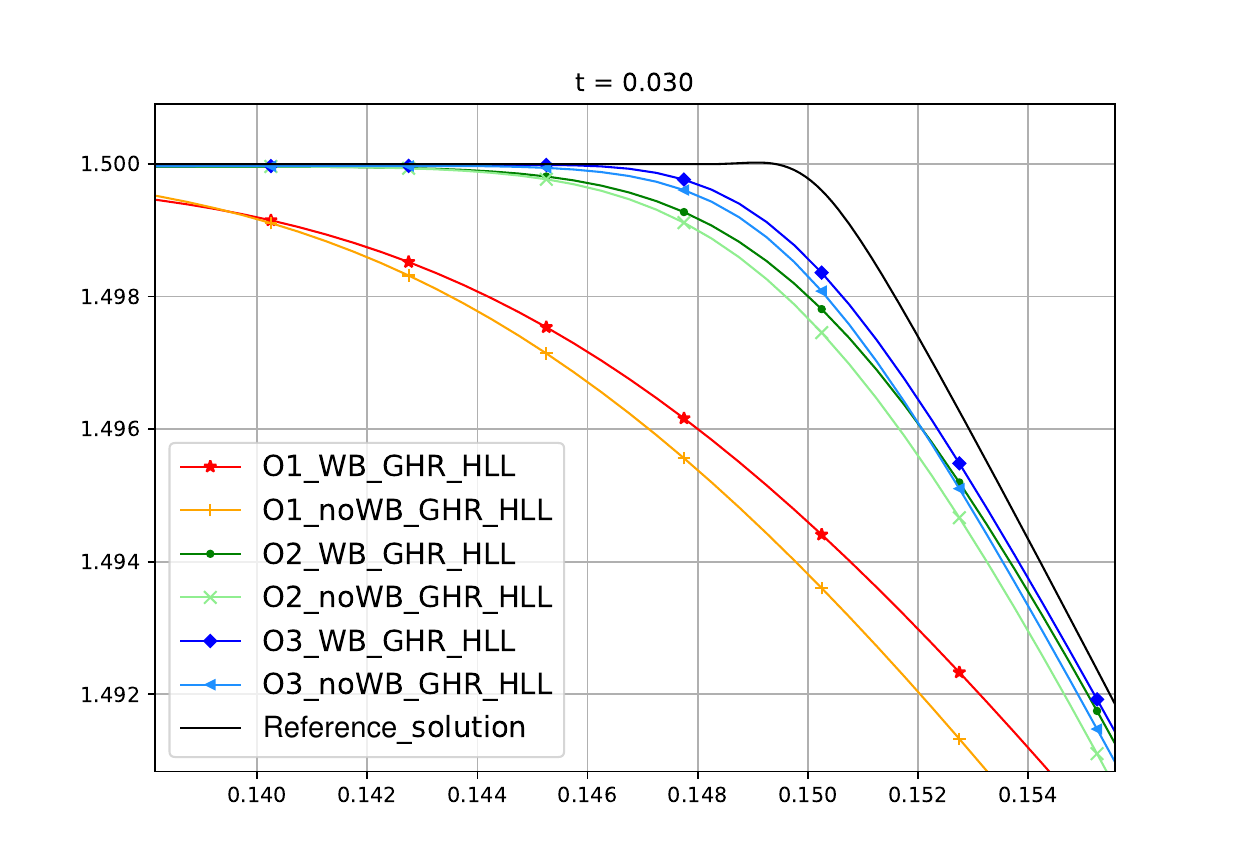}
		\caption*{Variable $A/A_{0}$. Zoom rarefaction}
		\label{fig:Test7_WB_vs_noWB_t_003_AA0_zoom_rarefaction_head}
	\end{subfigure}
	\begin{subfigure}[h]{0.32\textwidth}
		\centering
		\includegraphics[width=1\linewidth]{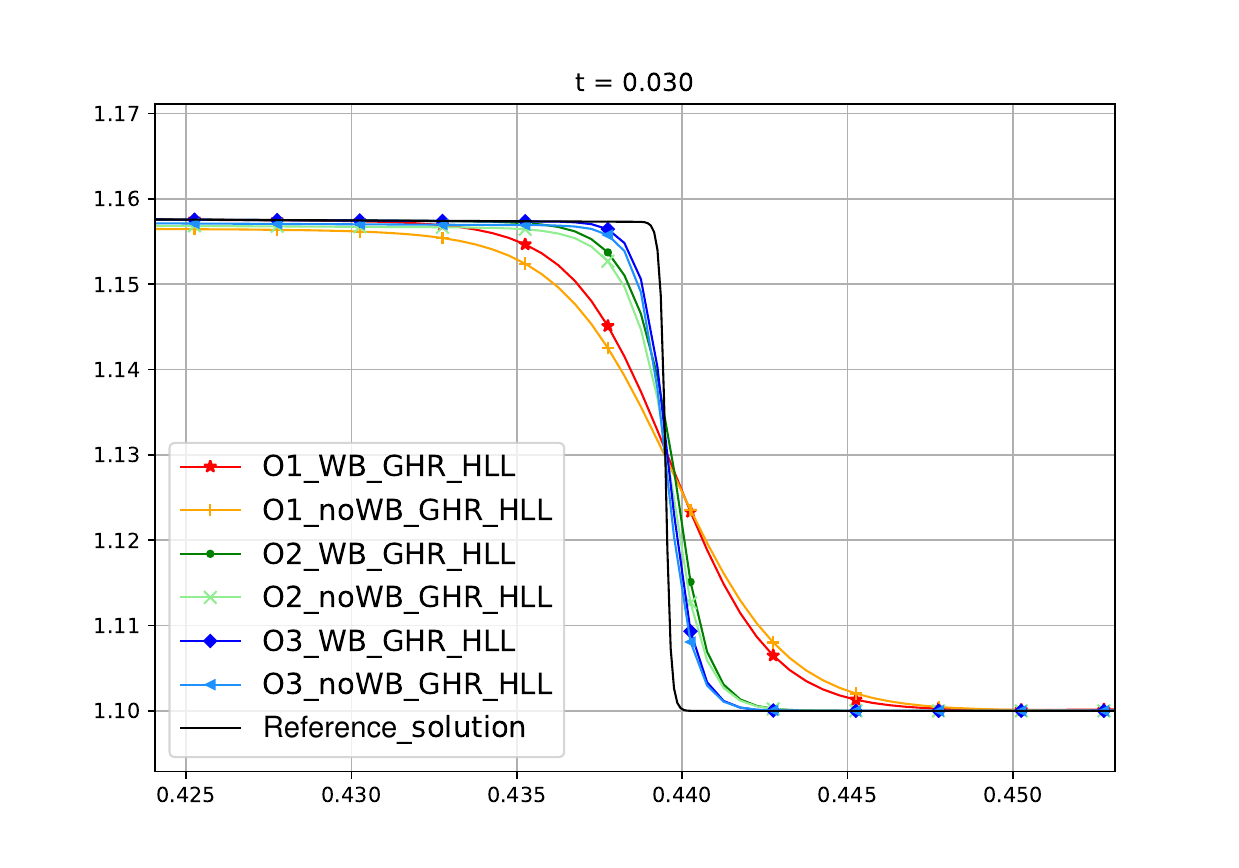}
		\caption*{Variable $A/A_{0}$. Zoom shock}
		\label{fig:Test7_WB_vs_noWB_t_003_AA0_zoom_shock}
	\end{subfigure}
    \begin{subfigure}[h]{0.49\textwidth}
		\centering
		\includegraphics[width=0.8\linewidth]{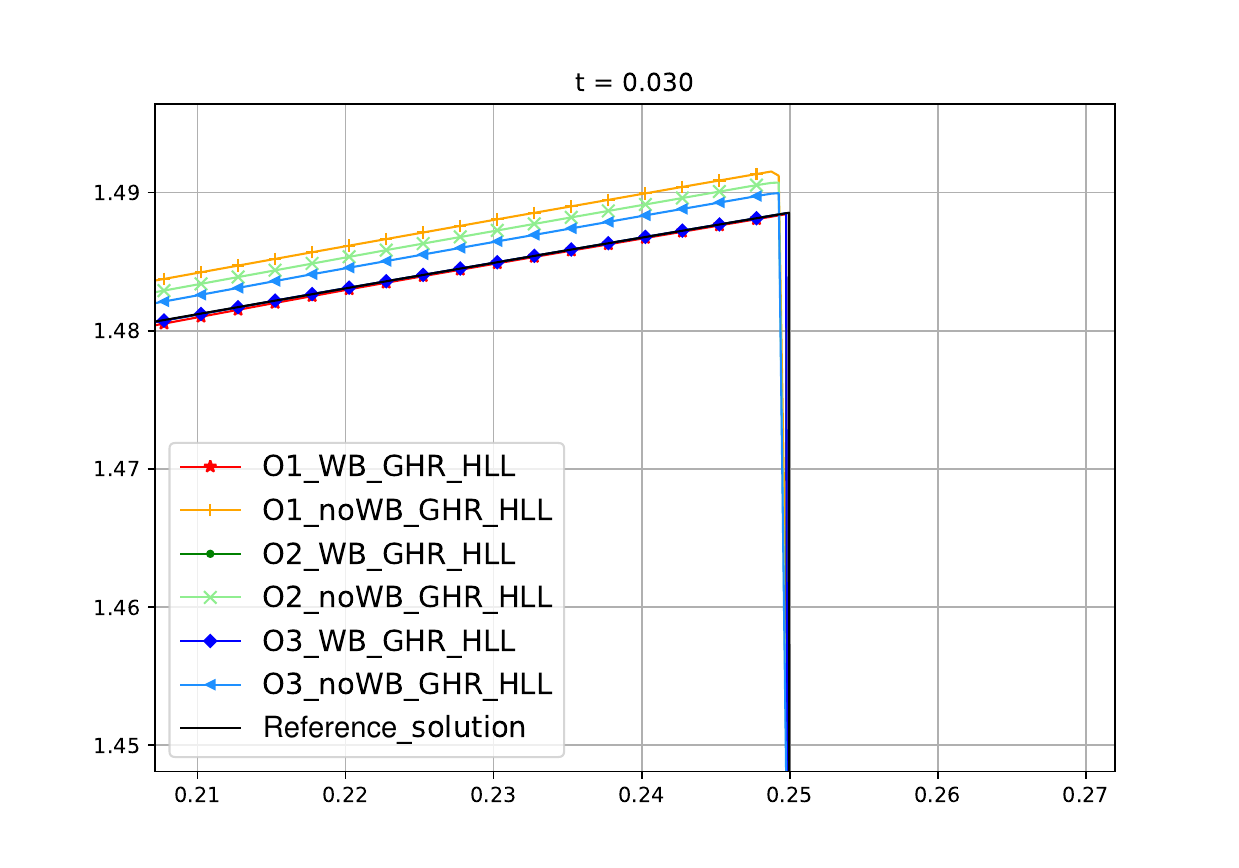}
		\caption*{Variable $A/A_{0}$. Zoom contact discontinuity: left-side}
		\label{fig:Test7_WB_vs_noWB_t_003_AA0_zoom_contact_left}
	\end{subfigure}
	\begin{subfigure}[h]{0.49\textwidth}
		\centering
		\includegraphics[width=0.8\linewidth]{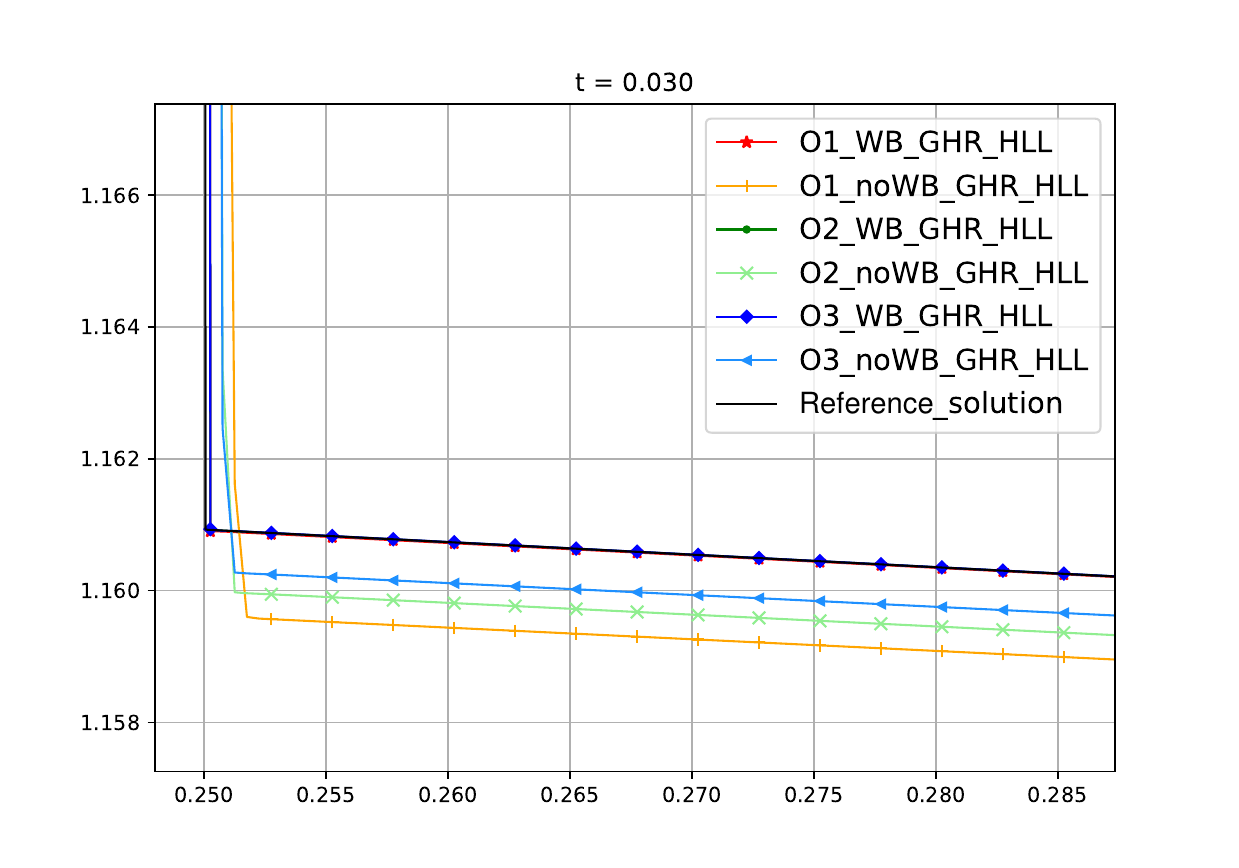}
		\caption*{Variable $A/A_{0}$. Zoom contact discontinuity: right-side}
		\label{fig:Test7_WB_vs_noWB_t_003_AA0_zoom_contact_right}
	\end{subfigure}
	\caption{Test 7: numerical solution at time $t=0.03\,s$ of first-, second-, third-order well-balanced and non well-balanced methods. Variable $A/A_{0}$. Top: complete domain (left), zoom at rarefaction (center), zoom at shock (right). Bottom: zoom left-side contact discontinuity (left), zoom right-side contact discontinuity (right).}
	\label{fig:Test7_WB_vs_noWB_t_AA0}
\end{figure}

\begin{figure}[h]
	\begin{subfigure}[h]{0.32\textwidth}
		\centering
		\includegraphics[width=1\linewidth]{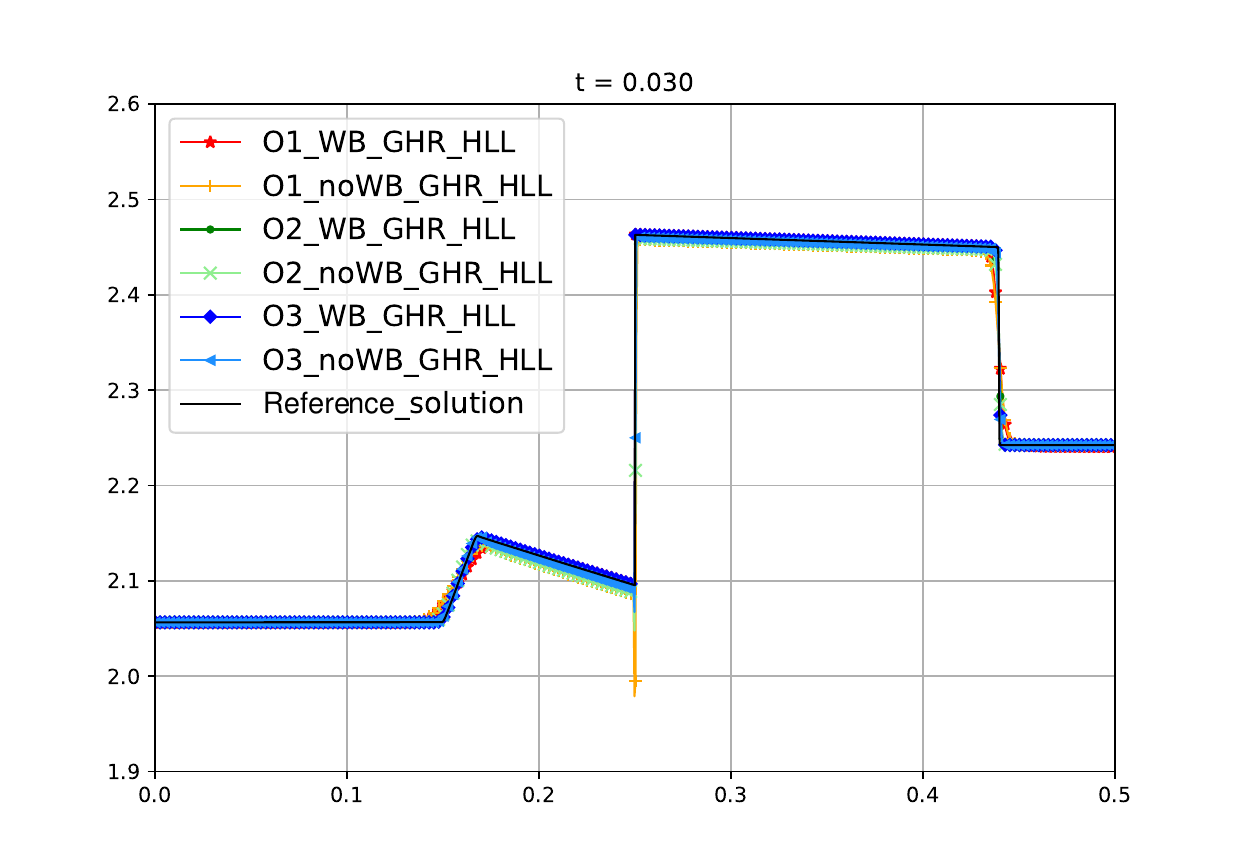}
		\caption*{Variable $u$}
		\label{fig:Test7_WB_vs_noWB_t_003_u}
	\end{subfigure}
	\begin{subfigure}[h]{0.32\textwidth}
		\centering
		\includegraphics[width=1\linewidth]{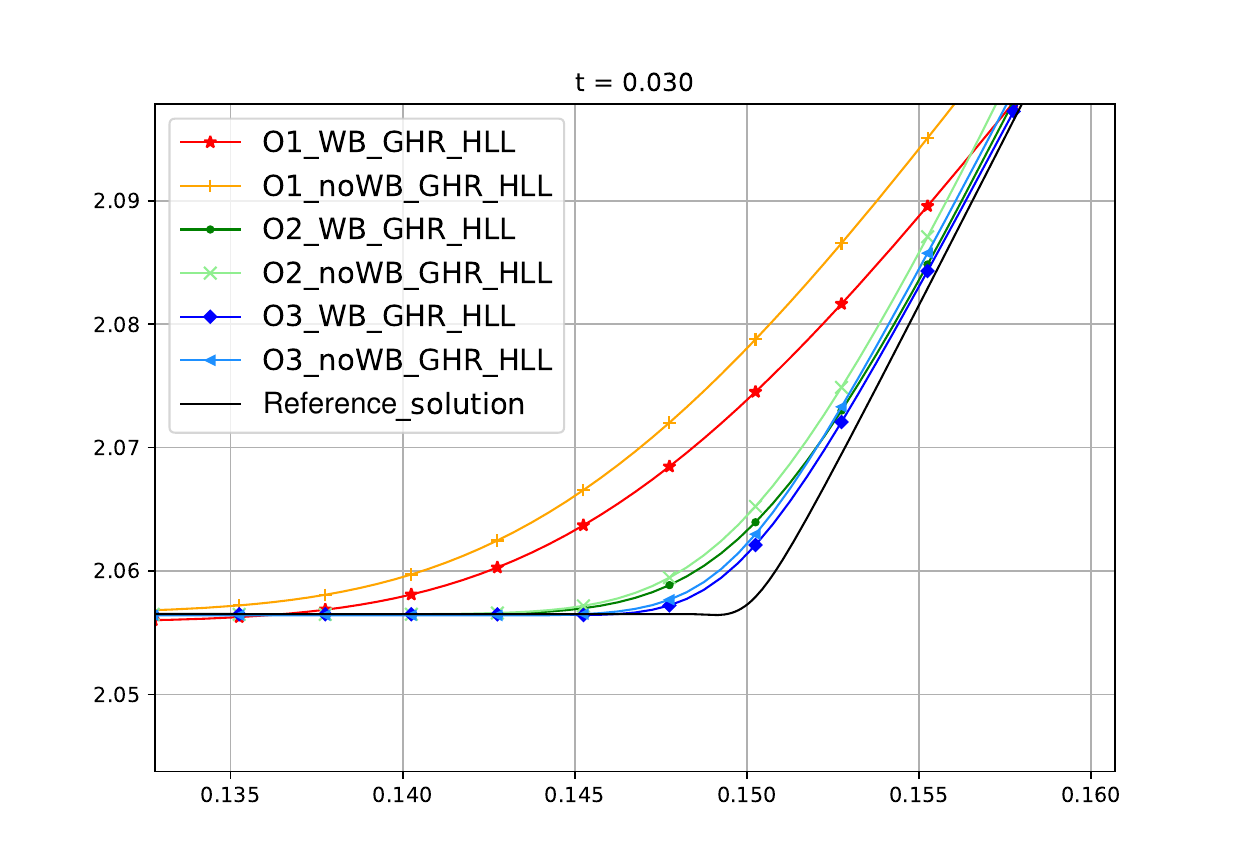}
		\caption*{Variable $u$. Inset at rarefaction}
		\label{fig:Test7_WB_vs_noWB_t_003_u_zoom_rarefaction}
	\end{subfigure}
	\begin{subfigure}[h]{0.32\textwidth}
		\centering
		\includegraphics[width=1\linewidth]{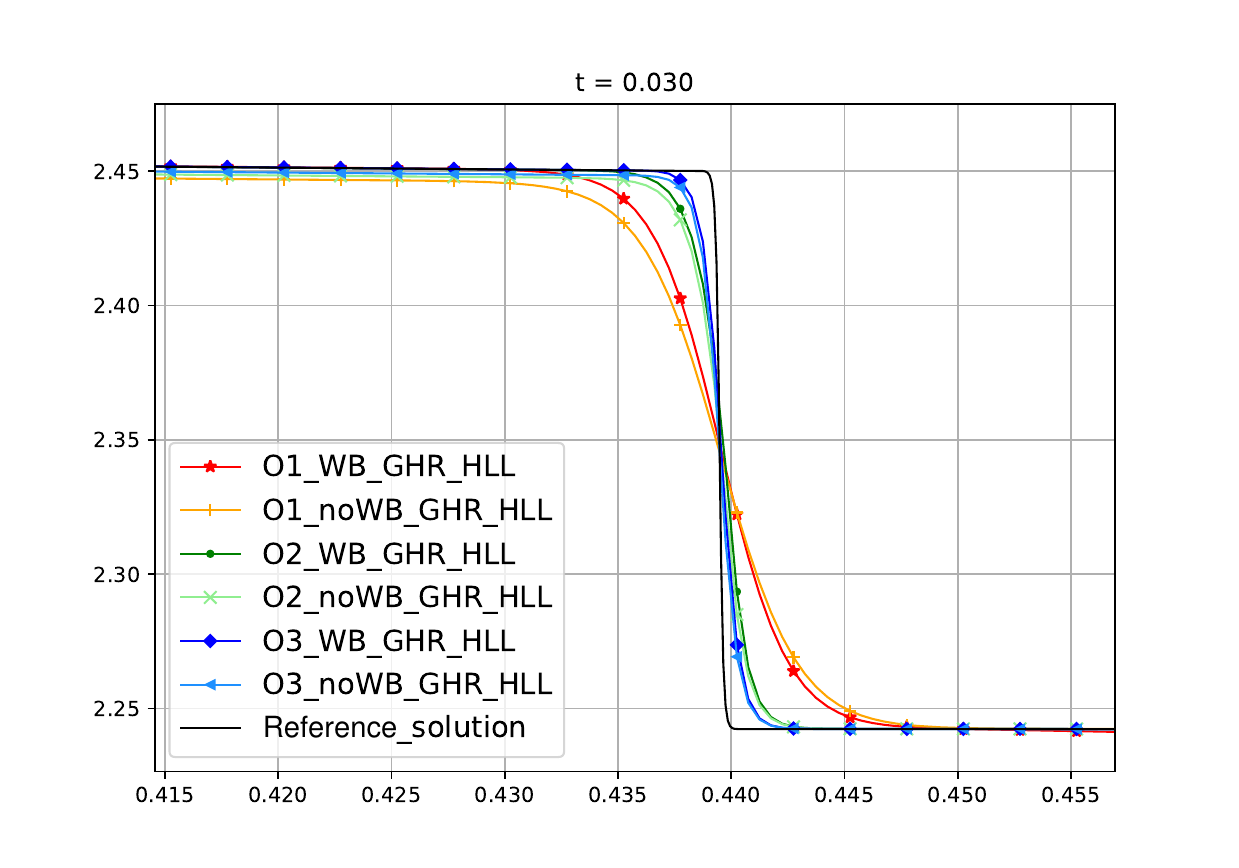}
		\caption*{Variable $u$. Inset at shock}
		\label{fig:Test7_WB_vs_noWB_t_003_u_zoom_shock}
	\end{subfigure}
    \begin{subfigure}[h]{0.49\textwidth}
		\centering
		\includegraphics[width=0.8\linewidth]{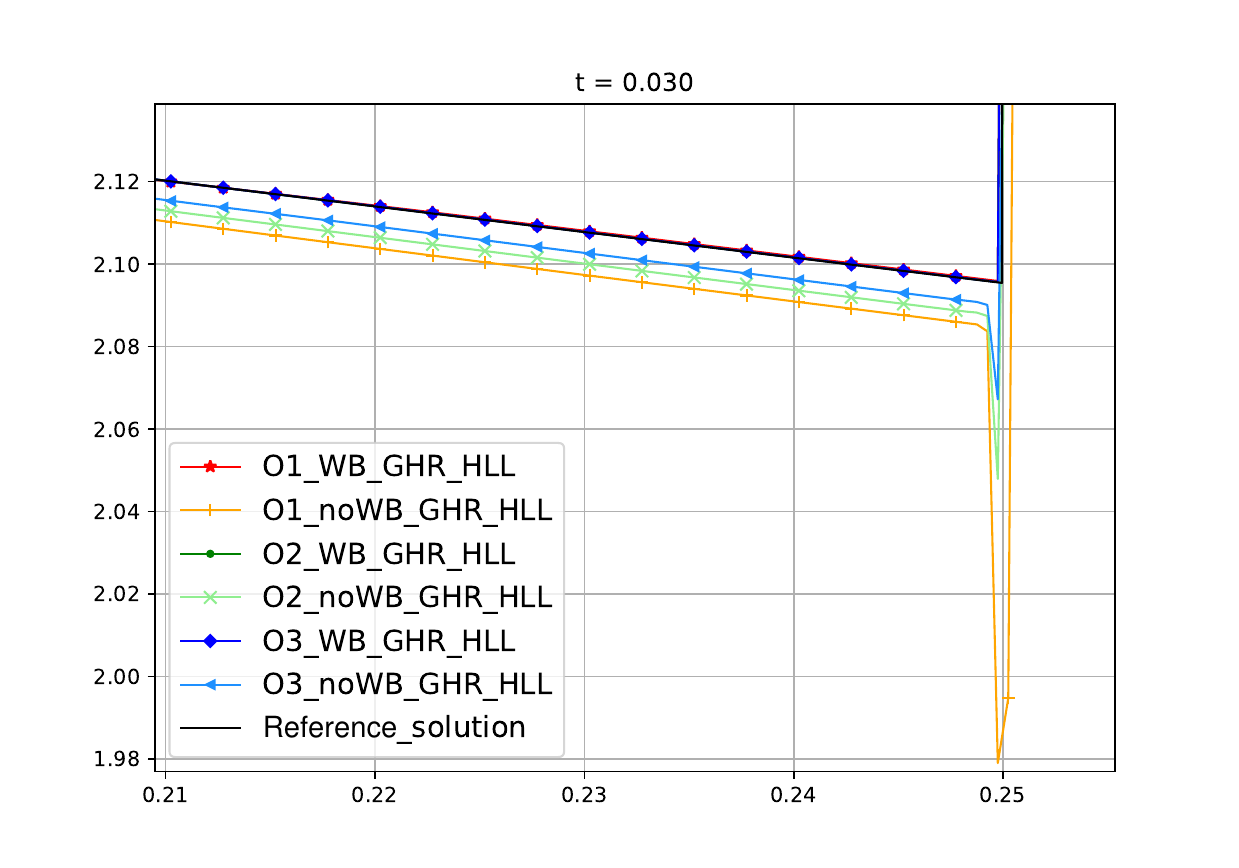}
		\caption*{Variable $u$. Inset at contact discontinuity: left-side}
		\label{fig:Test7_WB_vs_noWB_t_003_u_zoom_contact_left}
	\end{subfigure}
	\begin{subfigure}[h]{0.49\textwidth}
		\centering
		\includegraphics[width=0.8\linewidth]{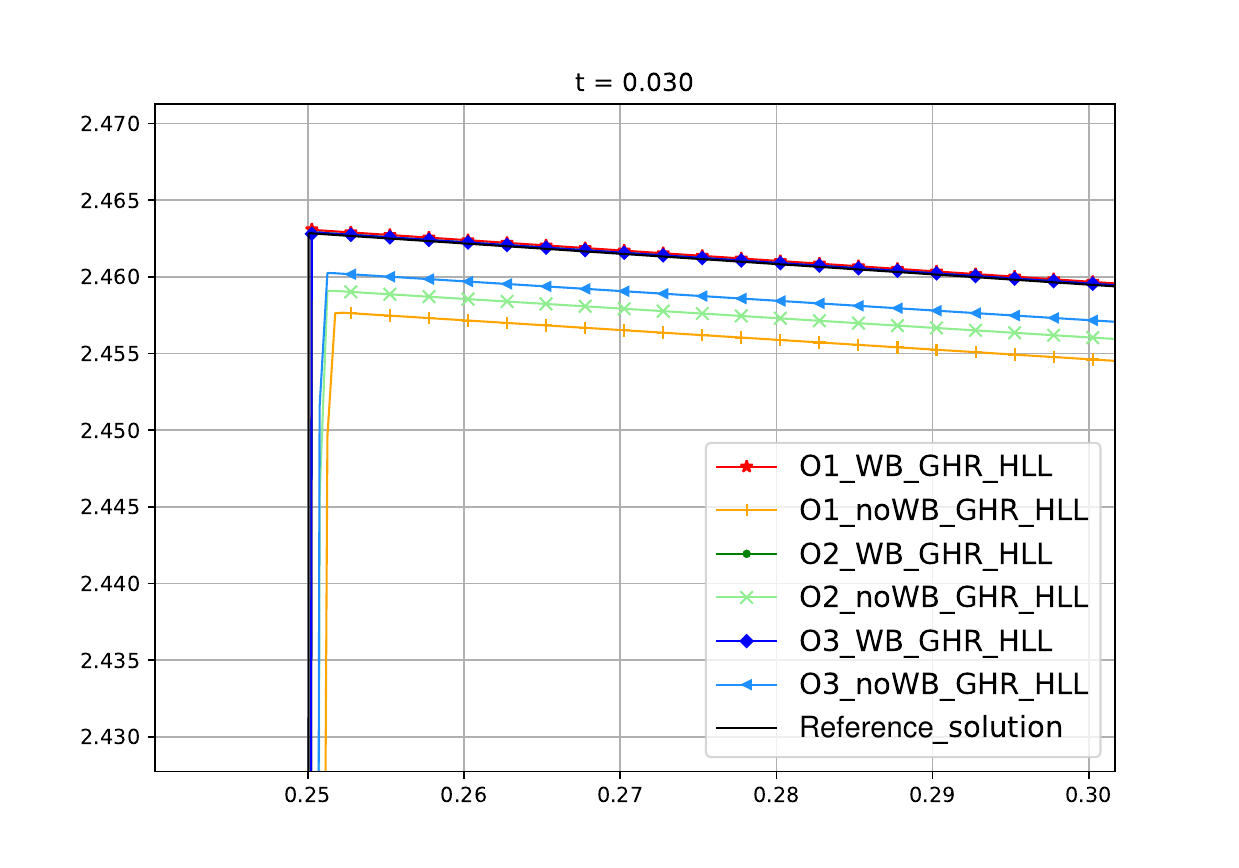}
		\caption*{Variable $u$. Inset at contact discontinuity: right-side}
		\label{fig:Test7_WB_vs_noWB_t_003_u_zoom_contact_right}
	\end{subfigure}
	\caption{Test 7: numerical solution at time $t=0.03\,s$ of first-, second-, third-order well-balanced and non well-balanced methods. Variable $u$. Top: complete domain (left), inset at rarefaction (center), inset at shock (right). Bottom: inset at left-side contact discontinuity (left), inset at right-side contact discontinuity (right).}
	\label{fig:Test7_WB_vs_noWB_t_u}
\end{figure}

\begin{figure}[h]
    \begin{subfigure}[h]{0.49\textwidth}
		\centering
		\includegraphics[width=0.8\linewidth]{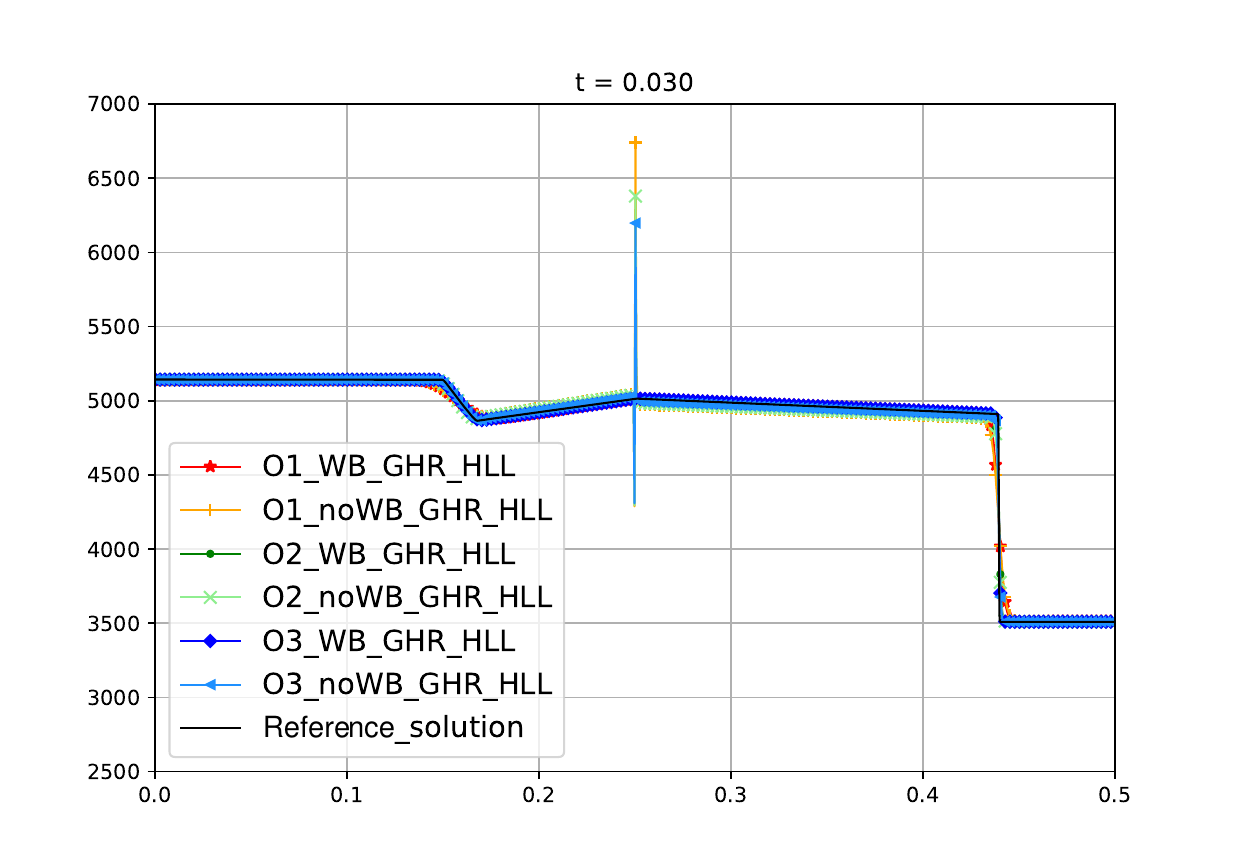}
		\caption*{Energy $\Gamma$. Complete domain}
		\label{fig:Test7_WB_vs_noWB_t_003_Gamma}
	\end{subfigure}
	\begin{subfigure}[h]{0.49\textwidth}
		\centering
		\includegraphics[width=0.8\linewidth]{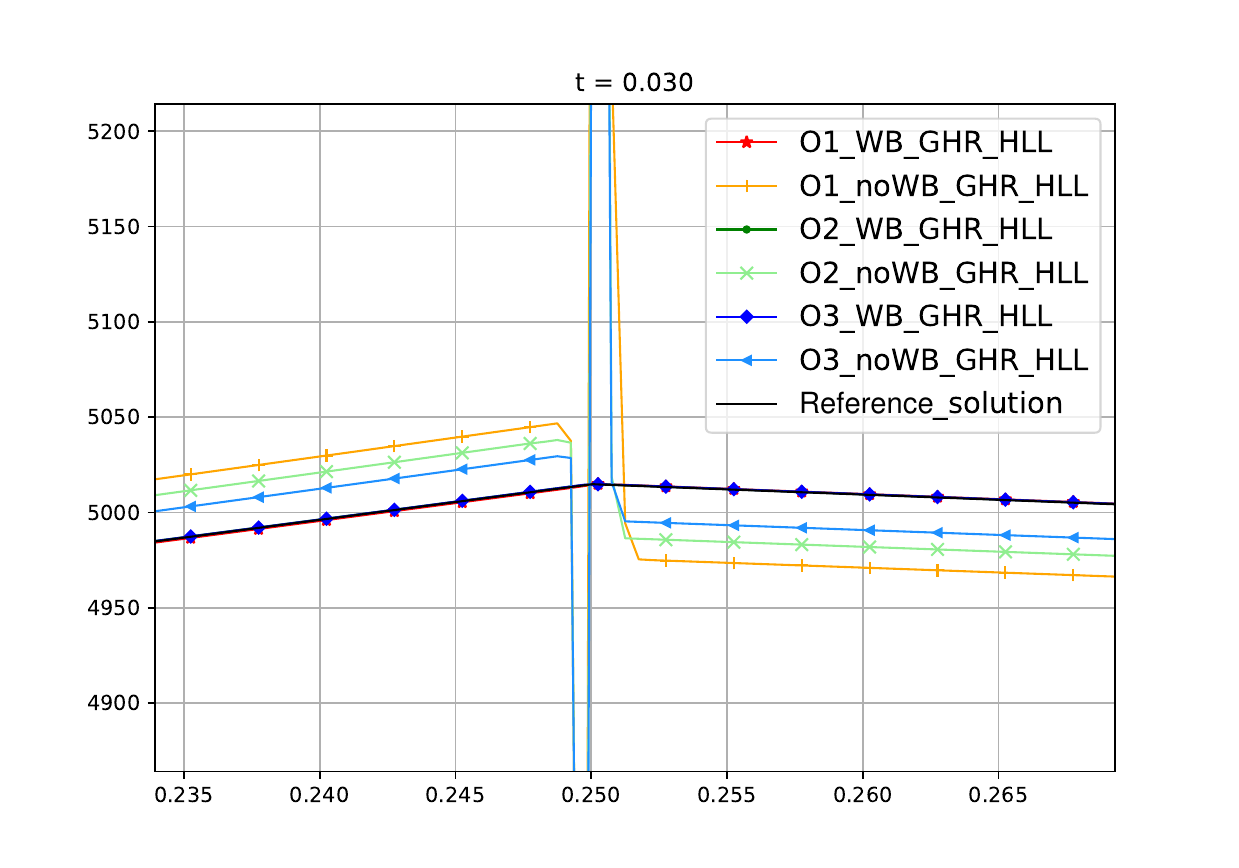}
		\caption*{Energy $\Gamma$. Zoom contact discontinuity}
		\label{fig:Test7_WB_vs_noWB_t_003_Gamma_zoom_contact}
	\end{subfigure}
	\caption{Test 7: numerical solution at time $t=0.03\,s$ of first-, second-, third-order well-balanced and non well-balanced methods. Energy $\Gamma$. Complete domain (left), contact discontinuity (right).}
	\label{fig:Test7_WB_vs_noWB_t_Gamma}
\end{figure}

{\blue 

\subsection{Networks tests}

In this section we consider two tests, one concerning a vessel with prescribed inlet flow and coupled to a lumped-parameter model, and a second one regarding a network of 118 arteries with prescribed inlet pressure at the aortic root and terminal vessels coupled to lumped-parameter models.

\subsubsection{Vessel - lumped-parameter model test}


We consider a vein with the following characteristics:
\begin{itemize}
\item vessel length: $L = 1.4\; cm$ and reference radius $R_0=0.015\; cm$,
\item tube law parameters: $m=10$, $n=-3/2$, $K = 10000\;dyn/cm^2$ and $p_0= 0\; dyn/cm^2$,
\item lumped-parameter model coefficients: $R_\mathrm{prox} = 750\; dyn \; s / cm^5$, $R_\mathrm{dist} = 4250\; dyn \;s / cm^5$, $C= 3 \times 10^{-9}\; cm^5 / dyn$, $P_\mathrm{ven} = 0$,
\item fixed inlet flow: $q_\mathrm{in} = q(0,t) = 0.0004\; mL/s$.
\end{itemize}

This problem has a steady state solution with $q(x,t) = q_\mathrm{in}$, and area that varies according to the solution of the Cauchy problem defined by \eqref{stationary_solutions_extended_A}, together with initial condition $p(L) = q_\mathrm{in} (R^\mathrm{prox} + R^\mathrm{dist})$. The simulation is initialized with $q(x,0)=0$ and approaches the steady state solution after a short transient. The computational domain is discretized using 10 cells and $CFL=0.5$. Final simulation time is $t_\mathrm{end}= 10 \;s$.

Figure \ref{fig:singlevessel} shows computational results for second order WB and NWB schemes. In can be observed how the solution obtained with the NWB scheme results in a steady state solution that violates mass conservation if one considers the 1D domain as control volume (recall that the prescribed flow at the vessel's inlet is $0.0004\;mL/s$). On the other hand, the WB scheme delivers the correct solution for flow rate, as well as a very good approximation of the pressure. Interestingly, errors in the NWB scheme for flow rate result in errors in the approximation of pressure, since the smaller flow rate computed with this scheme results in reduced viscous dissipation and consequently smaller pressure values with respect to the reference solution.

\begin{figure}
\begin{center}
\includegraphics[width=0.35\textwidth]{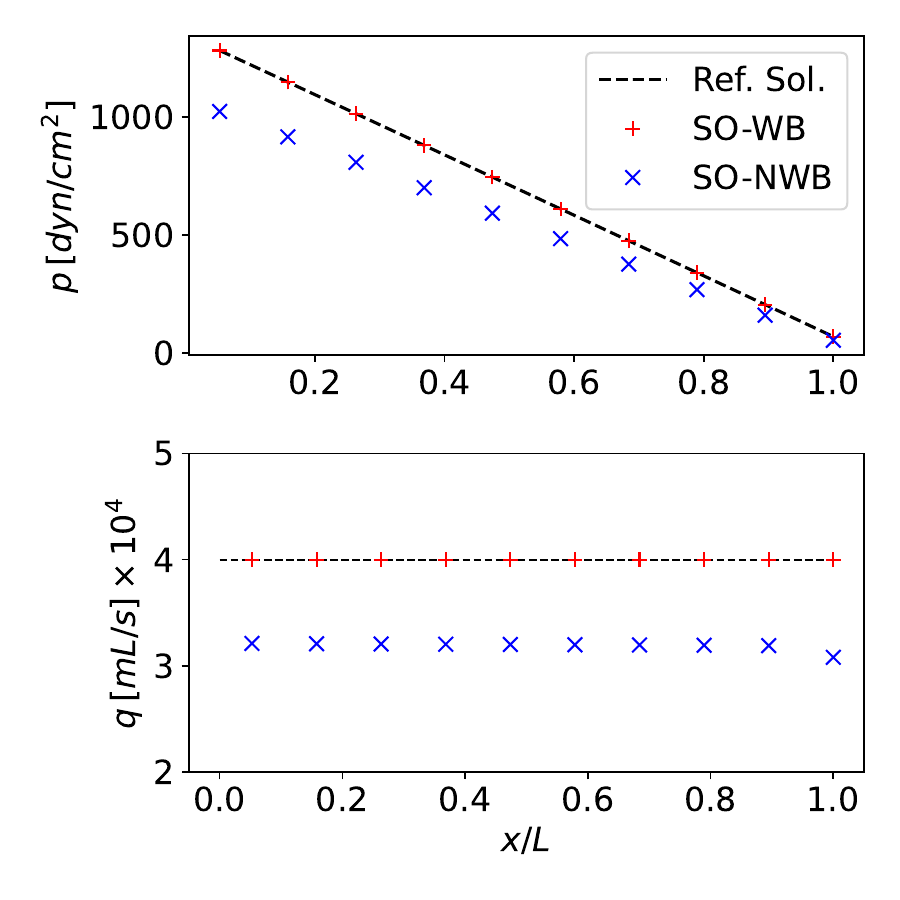}
\caption{Single vessel test with fixed inflow and terminal RCR model. Steady state for pressure and flow rate, with reference solution for pressure and exact (trivial) solution for flow rate. Reference solution obtained with second order WB scheme and 1000 computational cells.} \label{fig:singlevessel}
\end{center}
\end{figure}

\subsubsection{ADAN86 vessel network}

ADAN86 is a reduced version of the anatomically detailed arterial network model ADAN \cite{blancoAnatomicallyDetailedArterial2015}, published in \cite{safaeiRoadmapCardiovascularCirculation2016}. The model consists of 86 named arteries of the systemic circulation. Since some of these arteries are split in order to represent departing branches, the total number of 1D domains is 118. Complete information regarding vessel connectivity, length and reference radio $R_0$, as well as coefficients of terminal $RCR$ lumped parameter models coupled to terminal vessels are provided in Table \ref{tab:adannetwork} of the Appendix. The network configuration can be seen in Figure \ref{fig:adan86-overview}, panel (a). Tube law parameters are $m=1/2$, $n=0$, $P_0=10^5\;dyn/cm^2$ and $K$ is computed as:
$$
K = \frac{E\,h_0}{(1 - \nu^2) R_0}\;,
$$
with $E= 2 \times 10^6\, dyn/cm^2$ . The wall thickness $h_0$ computed as a function of the vessel reference radius
$$
 \frac{h_0}{R_0} = a \exp^{b R_0} + c \exp^{d R_0}\,,
$$
with $a = 0.2802$, $b = -5.053\,cm^{-1}$, $c = 0.1324$ and $d = -0.1114\,cm^{-1}$, according to \cite{mullerAnatomicallyDetailedArterialvenous2023}. 

A special feature of ADAN86 is that the spatial location of the points belonging to the polyline that defines each vessel is known. This allows for a very detailed characterization of the gravitational field impact on vessels, since we can project gravity on each one of these points according to the tangent to the polyline.  Given the set of points by which a polyline is defined, for each computational cell we compute $g(x)$ by evaluating $g(x)$ along the polyline at the collocation nodes used by the numerical scheme. As a result, the gravity term $g(x)$ can vary significantly along vessels. In Figure \ref{fig:adan86-overview}, panels (b) and (c) show the course of the left vertebral artery and the term $g(x)$ along this vessel, as an example of how variable $g(x)$ can be. 

\begin{figure}[htbp]
  \centering

  \begin{minipage}[c]{0.48\textwidth}
  \begin{center}
    \includegraphics[width=\linewidth,height=0.9\textheight,keepaspectratio]{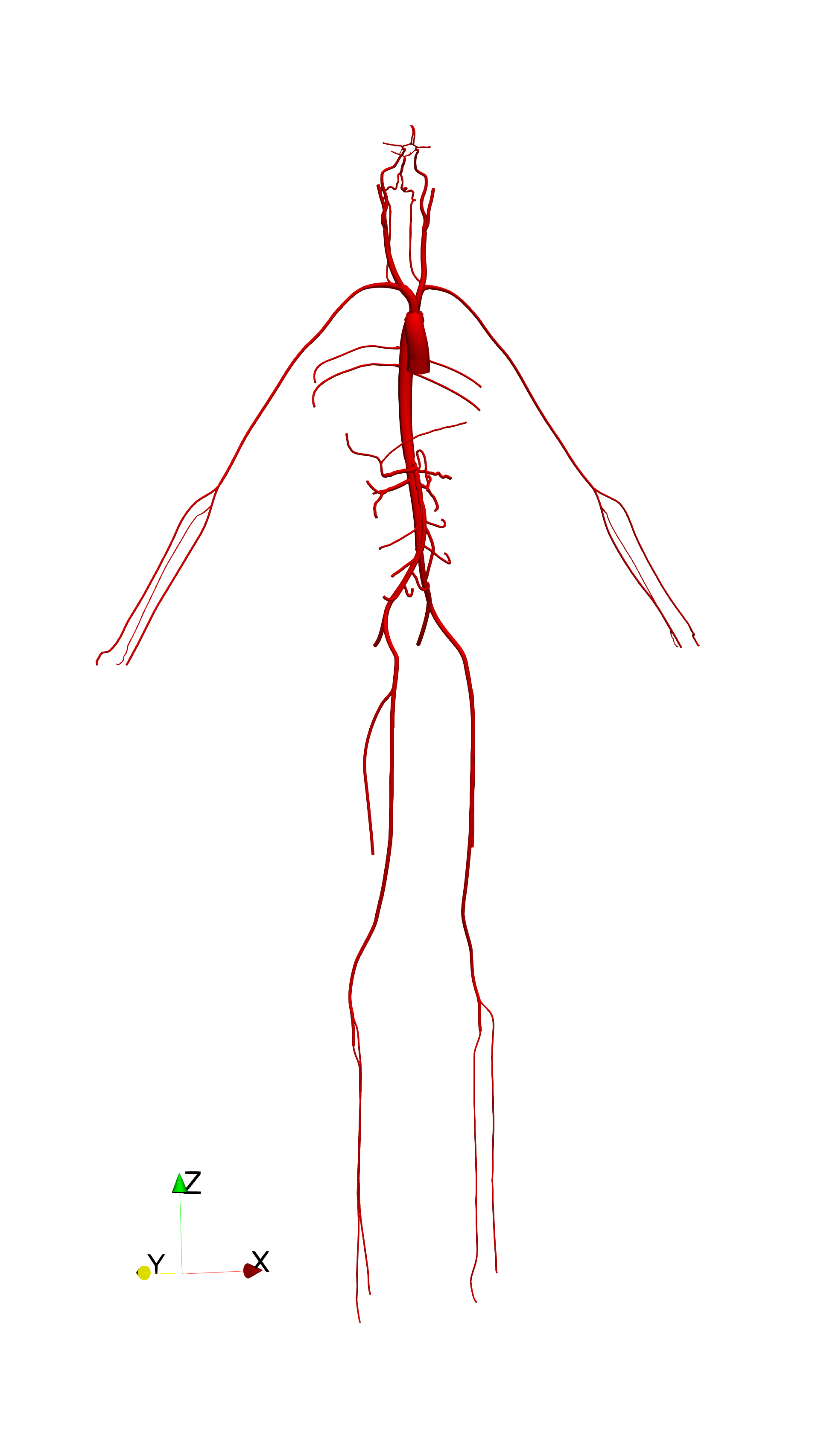}
    \end{center}
    \small\textbf{(a)}
    
  \end{minipage}%
  \hfill
  \begin{minipage}[c]{0.48\textwidth}
    \vspace{0pt}
    \begin{minipage}[t]{\linewidth}
    \begin{center}
      \includegraphics[width=\linewidth,height=0.25\textheight,keepaspectratio]{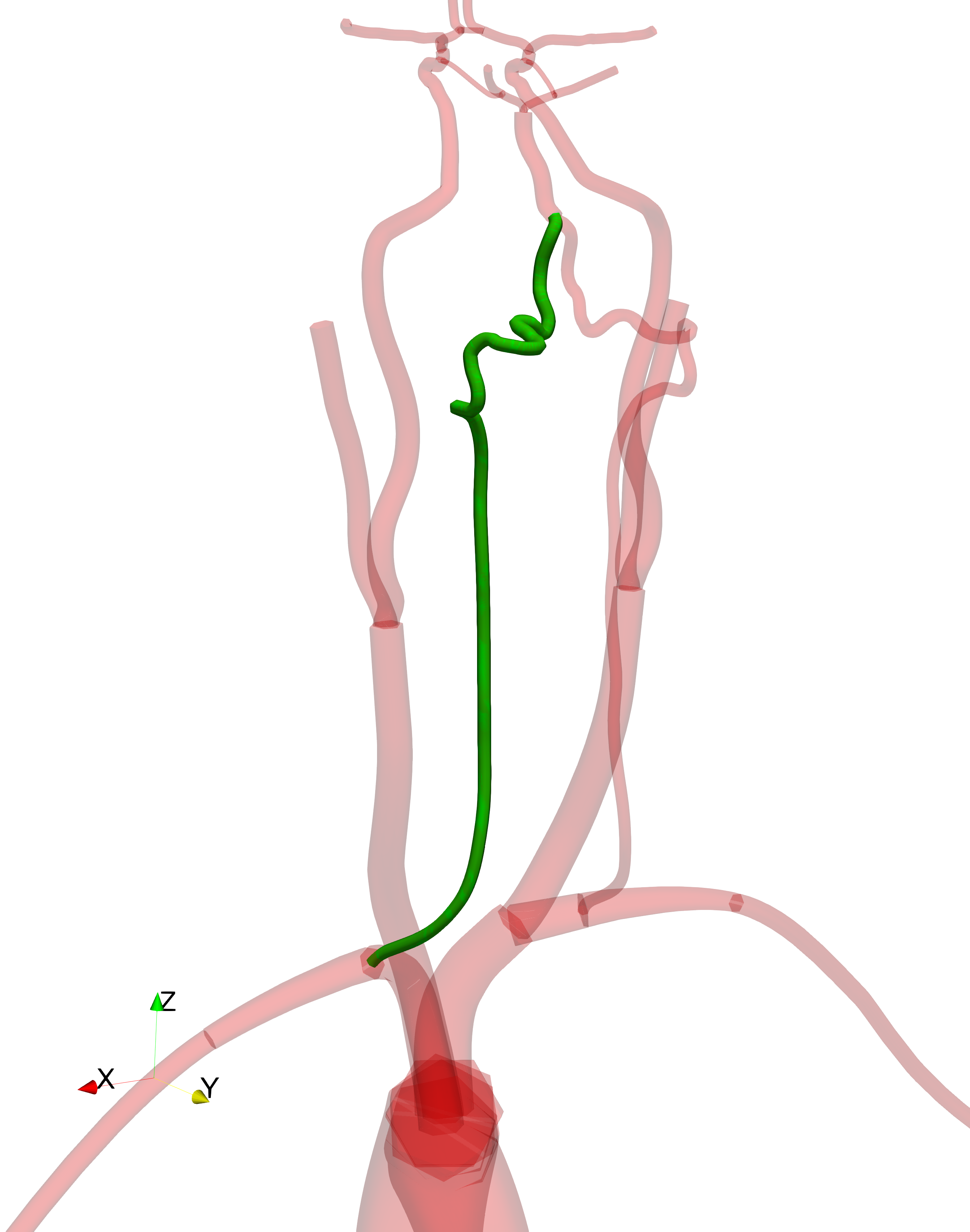}
      \end{center}
      \small\textbf{(b)}

    \end{minipage}

    \vspace{1em}

    \begin{minipage}[t]{\linewidth}
    \begin{center}
      \includegraphics[width=\linewidth,height=0.4\textheight,keepaspectratio]{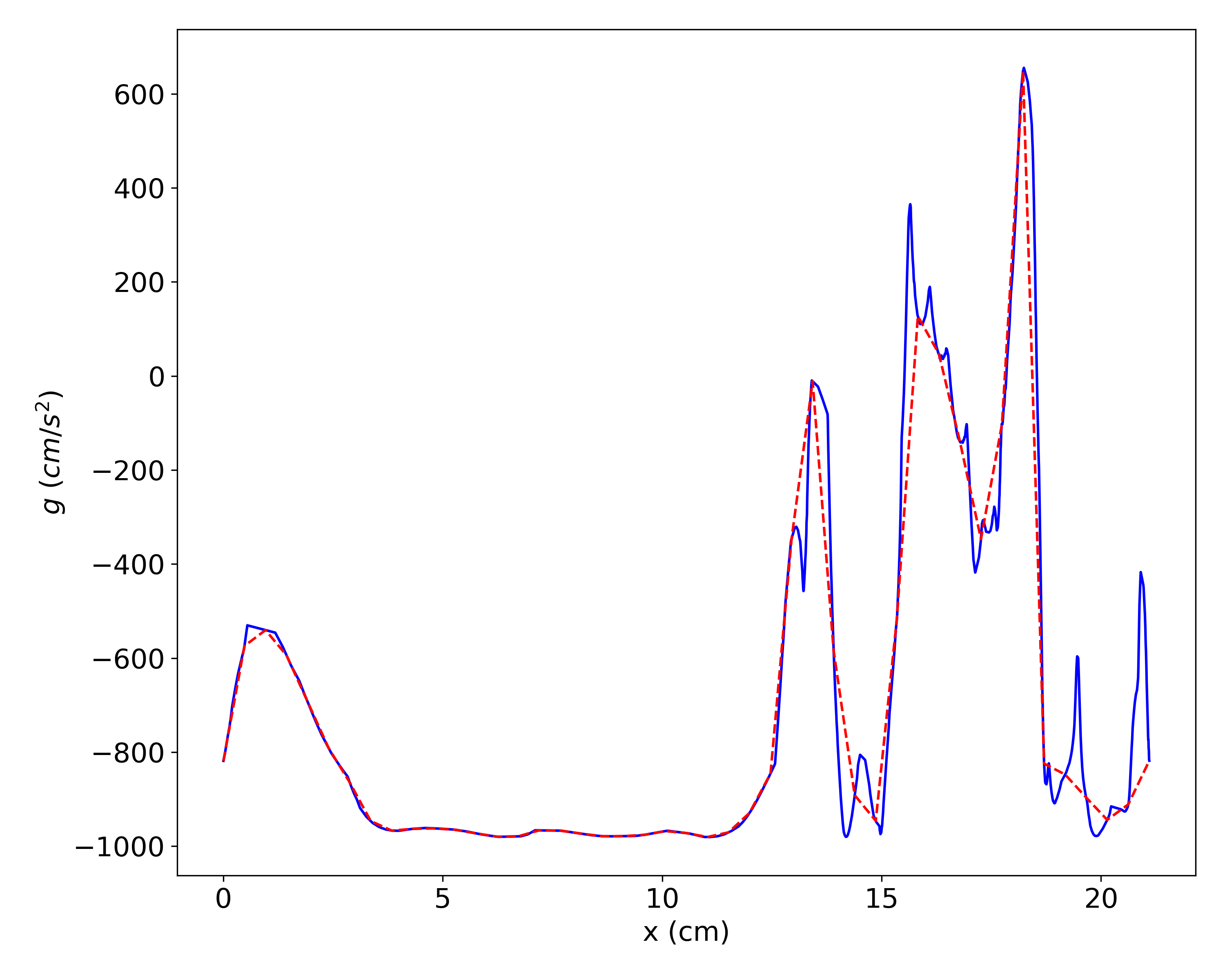}
      \end{center}
      \small\textbf{(c)}
    \end{minipage}
  \end{minipage}
    \caption{ADAN86 model. Network overview (a), posterior view of the left vertebral artery, evidenced in green (b), and $g(x)$ for this vessel (c). The gravitational field acts in opposite direction to versor $z$ displayed in the panels. In panel (c), the blue line represents true $g$ while the dashed line regards the discretization of term $g(x)$ for second order schemes used here, which results in continuous piecewise first order polynomials.}

\label{fig:adan86-overview}
\end{figure}

\paragraph{Deadman test}

Introducing gravity into cardiovascular models is an essential ingredient to properly describe the impact of orthostatic stress on haemodynamics \cite{Howard:1977}. In doing so, a straightforward way to verify the correct implementation of the mathematical description and numerical treatment of gravity is to perform tests for which a reference solution is available. In this context, a natural test to be performed is that of reproducing \emph{hydrostatic} pressure solutions for no flow conditions. One way of achieving this is by prescribing a constant pressure at the root of the aorta and imposing zero-flow conditions at terminal points of the network. In this way, the steady state solution is that of zero flow rate over the entire network and hydrostatic pressure, with a fixed and known value at the root of the aorta.

Figure \ref{fig:deadman} displays computational results for second order versions of both, WB and NWB schemes, using a computational mesh with characteristic length $\Delta x_\mathrm{char}=1\,cm$ and prescribed pressure at the root of the aorta $P_\mathrm{root}=10^5\,dyn/cm^2$. The top row shows the computed pressure field, the middle row regards deviations of computed pressure field with respect to hydrostatic pressure, and the bottom row shows flow rate over the network. Qualitatively, results obtained with both schemes seem to reasonably reproduce the hydrostatic pressure field, with minimum values for pressure in the cerebral region and maximal ones in the lower limbs. However, a closer inspection in which errors of the computed pressure field with respect to the hydrostatic one are shown, reveals that while errors are very small for the solution obtained with the WB scheme, even for this coarse mesh, the same is not true for the NWB scheme. In fact, this scheme produces deviations from the reference solution, which turn to be even more evident if one inspects flow rate over the network. Computed solutions for this variable evidence the poor approximation obtained with the NWB scheme. On the other hand, the WB scheme produces a solution where flow is orders of magnitude smaller than that obtained with the NWB scheme. Noteworthy, the anomalous flow field observed in the solution computed with the NWB scheme violates mass conservation. This is a very delicate aspect, especially if one would then move towards closed-loop models of the circulation, where mass conservation is of paramount importance due to the high sensitivity of variables of interest such as cardiac output or mean arterial pressure to total blood volume \cite{mullerAnatomicallyDetailedArterialvenous2023}. 

In order to provide a quantitative comparison of solutions obtained with the WB and NWB second order schemes, Figure \ref{fig:deadmanxy} shows computed pressure and flow rate over space for selected vessels. This figure does not show the actual pressure field. In turn, it shows pressure in each vessel shifted by the inlet pressure value: $p_\mathrm{fig}=p - p(0, t_\mathrm{end})$. The same is true for depicted \emph{Hyd. Pressure}, which is the hydrostatic pressure computed from the vessel inlet. In this way one can straightforwardly compare the agreement of the expected course of the pressure field with respect to the reference solution. These plots evidence that even for a rather coarse mesh, the WB scheme can accurately capture the effect of a, some times, highly variable gravitational term. Furthermore, we note that the predicted flow rate with the WB schemes is always orders of magnitude closer to the exact solution compared to that computed with the NWB scheme.

\begin{figure}
\begin{minipage}[c]{0.9\textwidth}
\begin{center}
\includegraphics[width=0.7\textwidth]{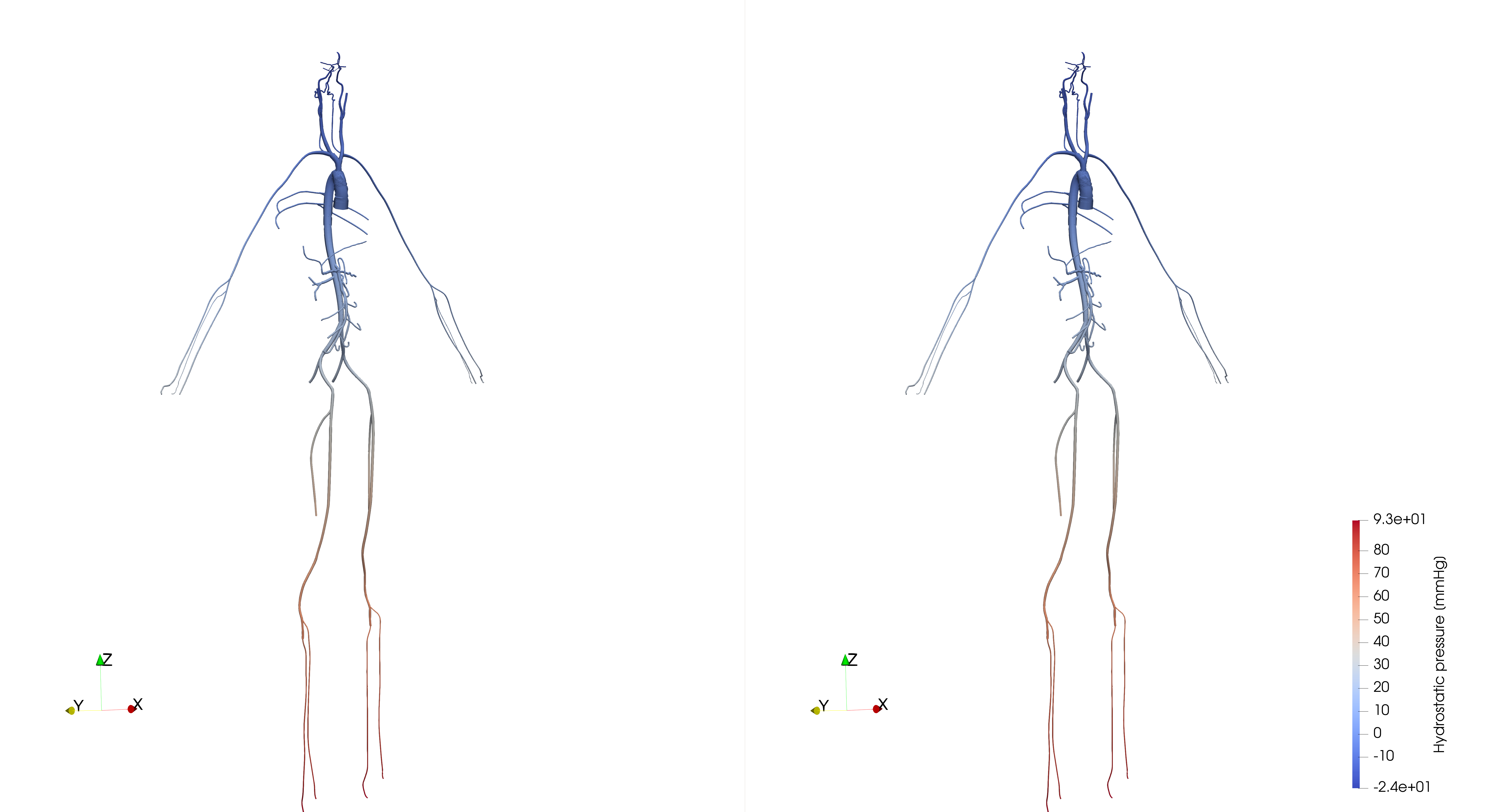} \\
\end{center}
\small\textbf{(a)}
\end{minipage}
\begin{minipage}[c]{0.9\textwidth}
\begin{center}
\includegraphics[width=0.65\textwidth]{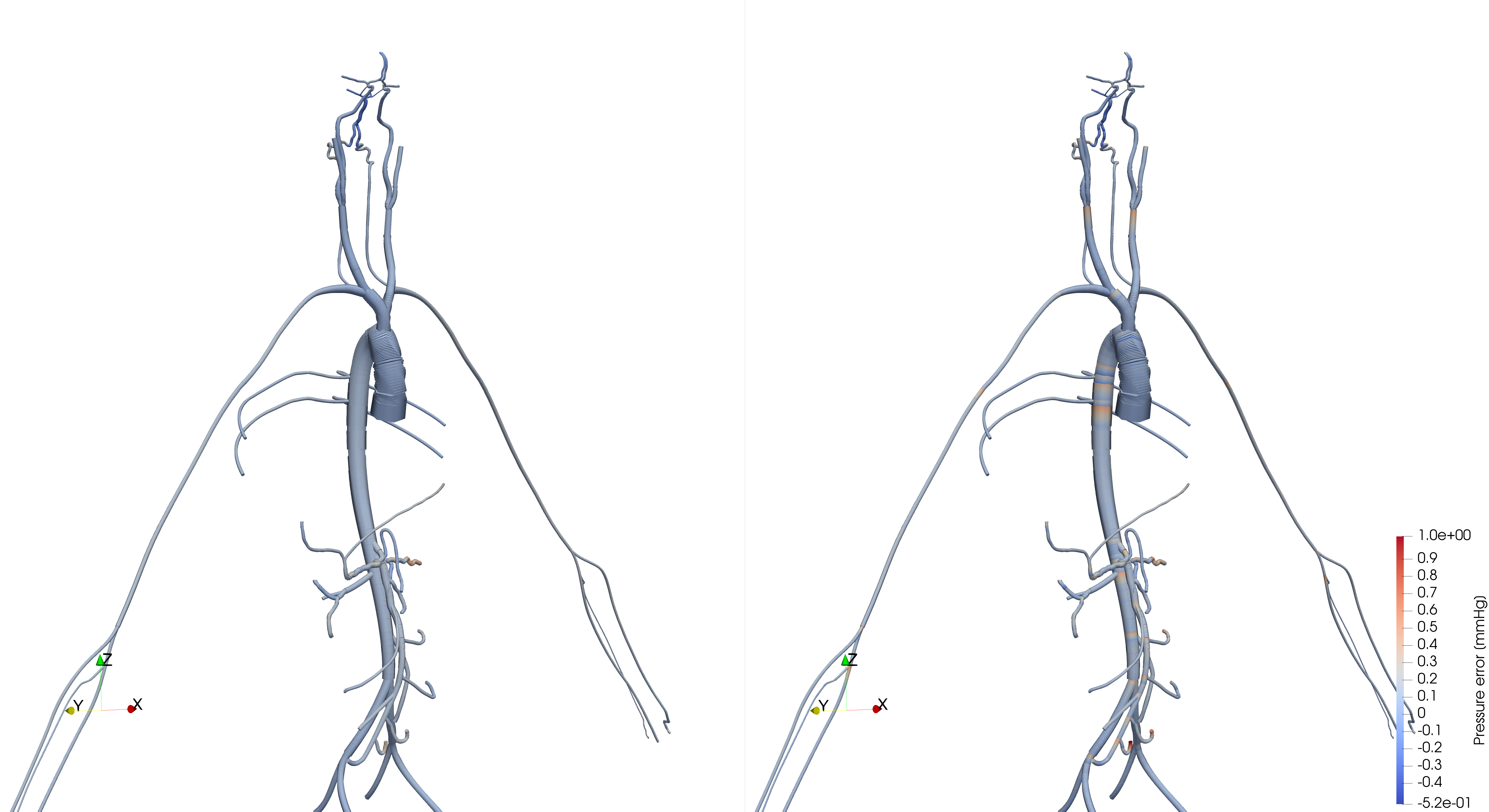} \\
\end{center}
\small\textbf{(b)}
\end{minipage}
\begin{minipage}[c]{0.9\textwidth}
\begin{center}
\includegraphics[width=0.65\textwidth]{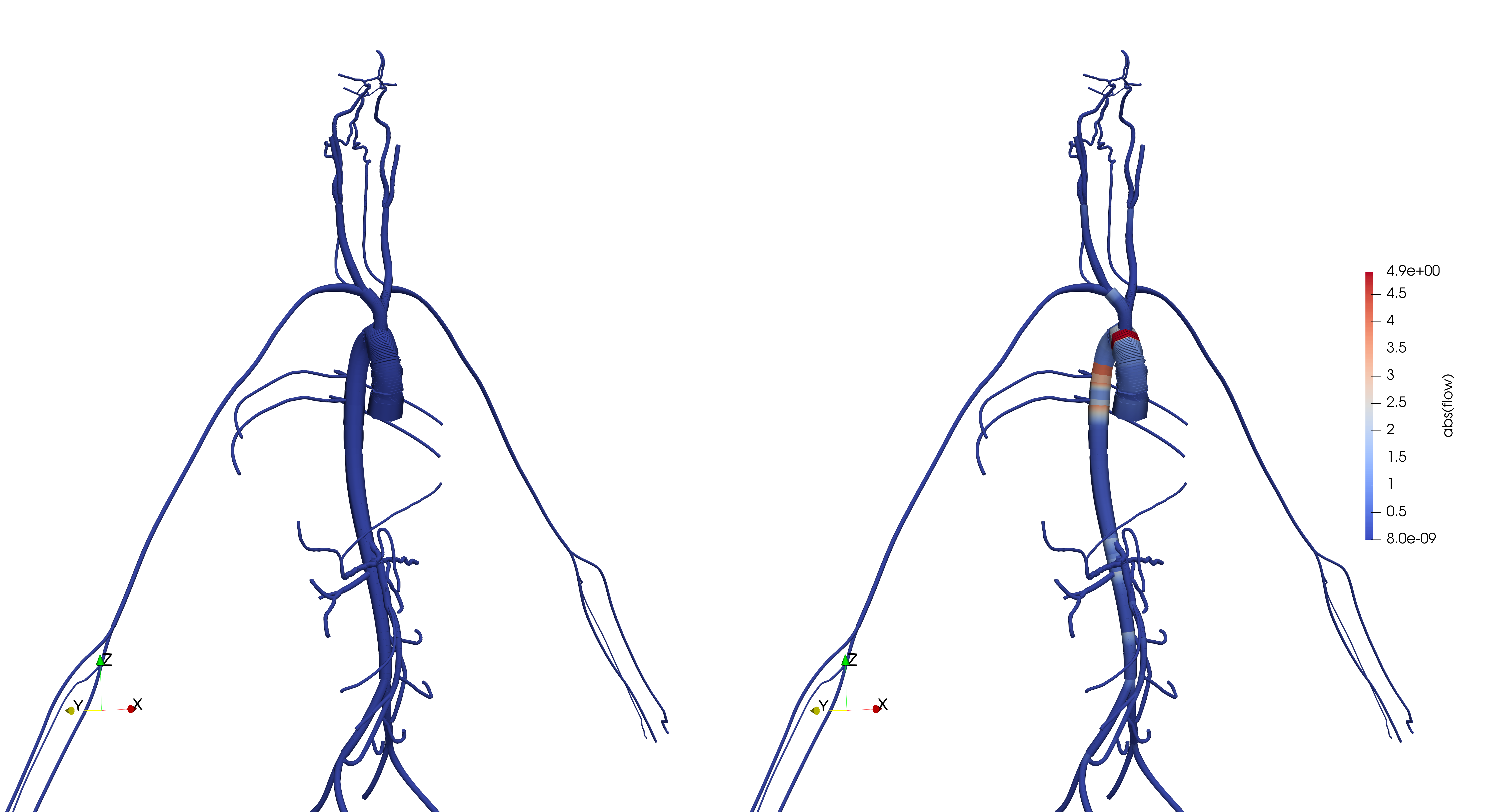} \\
\end{center}
\small\textbf{(c)}
\end{minipage}
\caption{ADAN86 model. Deadman test with gravity. Pressure field (a), error in pressure field with respect to hydrostatic pressure (b) and absolute value of flow rate (c). Left and right columns show results obtained with the second order WB and NWB schemes, respectively.}\label{fig:deadman}
\end{figure}

\begin{figure}
\begin{minipage}[c]{0.45\textwidth}
\begin{center}
\includegraphics[width=0.9\textwidth]{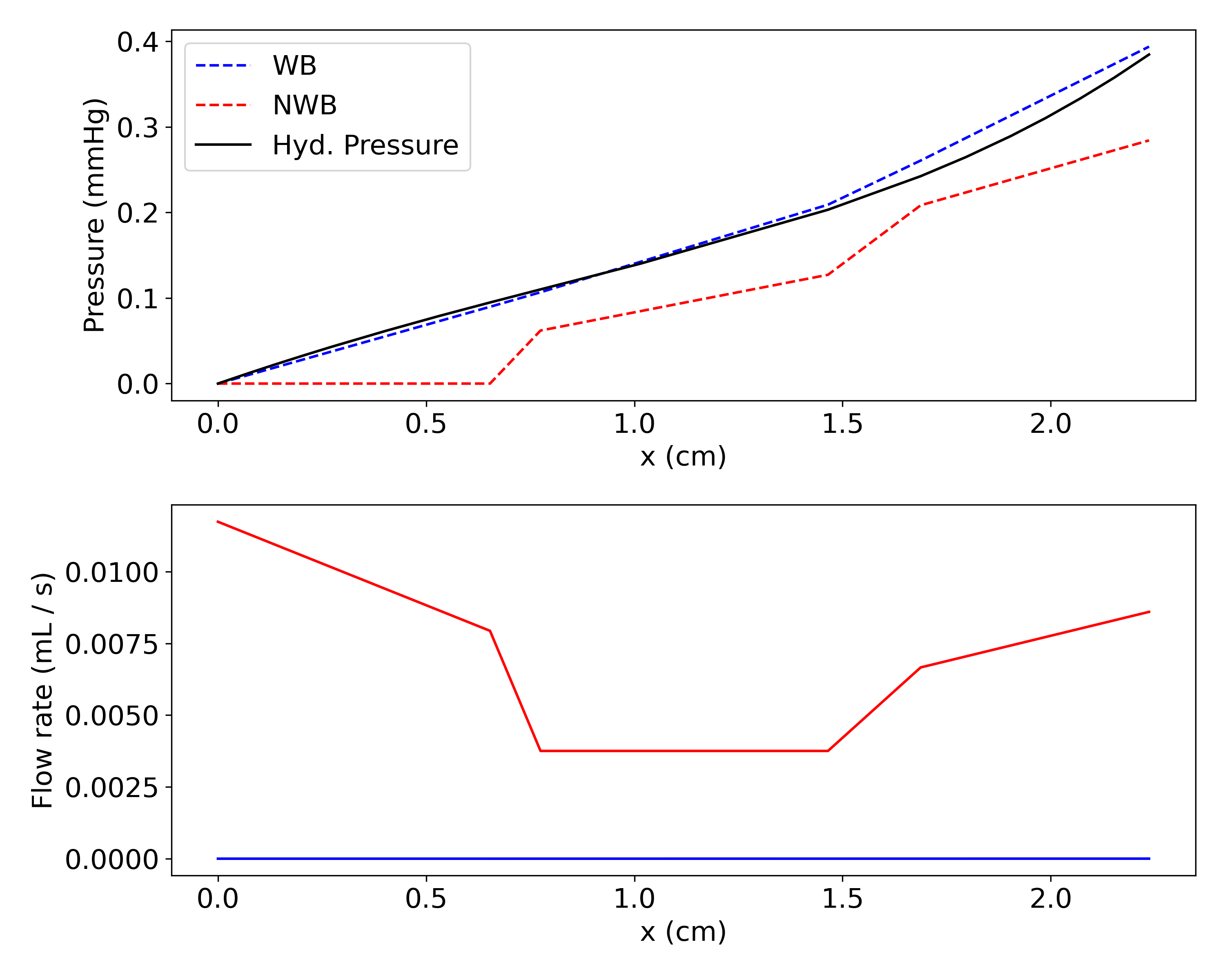}
\end{center}
\small\textbf{(a) Renal artery (7)}
\end{minipage}
\begin{minipage}[c]{0.45\textwidth}
\begin{center}
\includegraphics[width=0.9\textwidth]{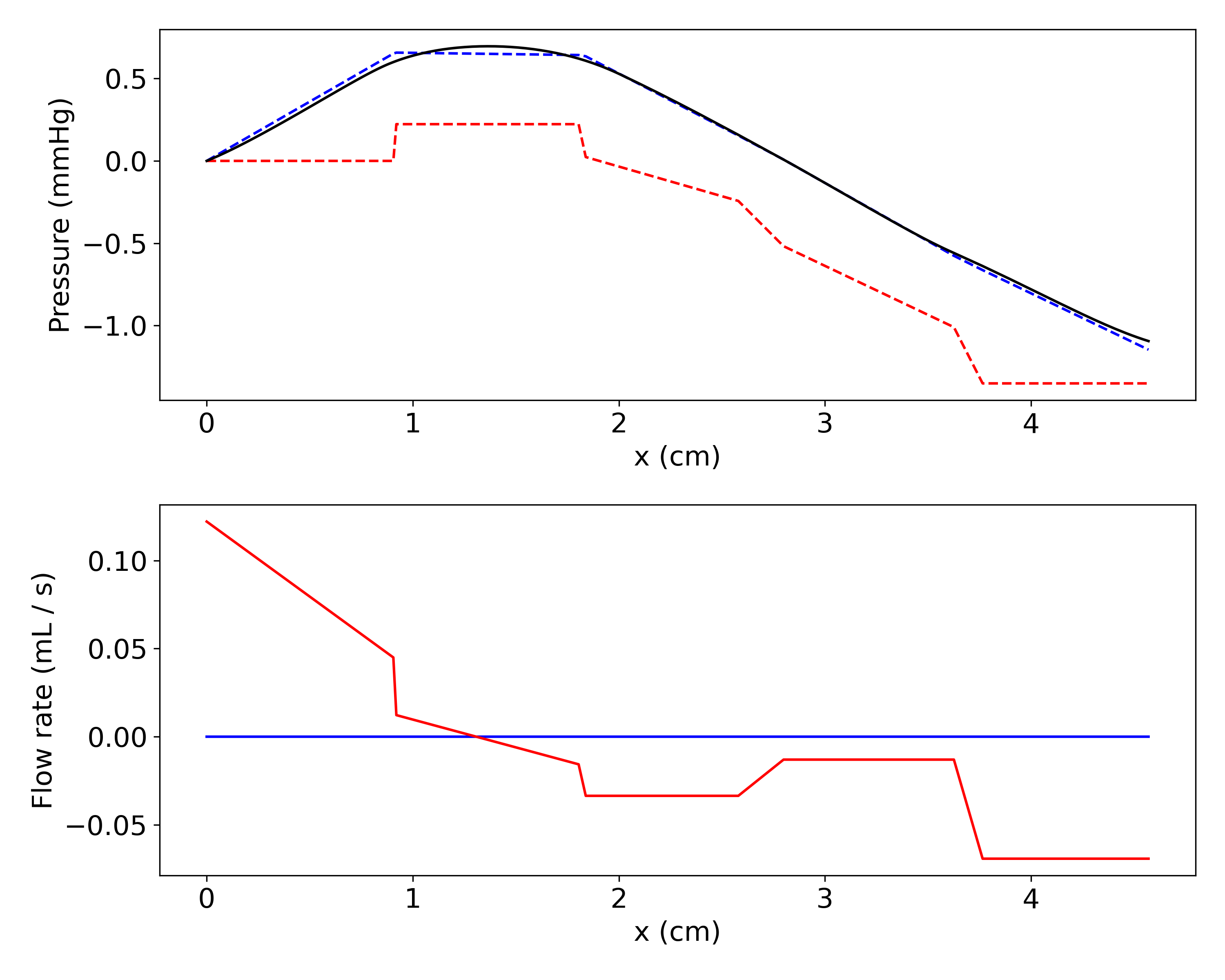}
\end{center}
\small\textbf{(b) Ileal artery (14)}
\end{minipage}
\begin{minipage}[c]{0.45\textwidth}
\begin{center}
\includegraphics[width=0.9\textwidth]{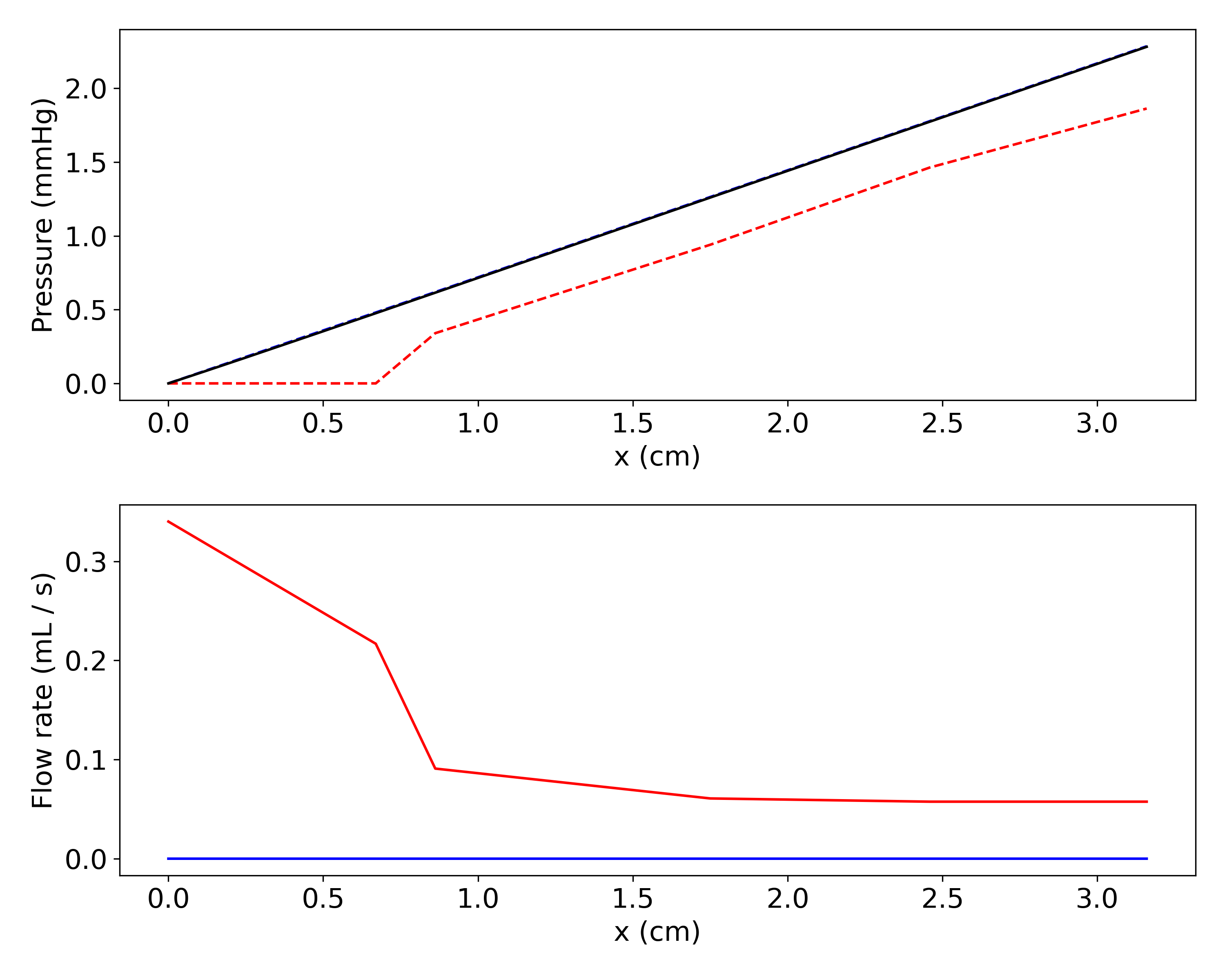}
\end{center}
\small\textbf{(c) Femoral artery (44)}
\end{minipage}
\begin{minipage}[c]{0.45\textwidth}
\begin{center}
\includegraphics[width=0.9\textwidth]{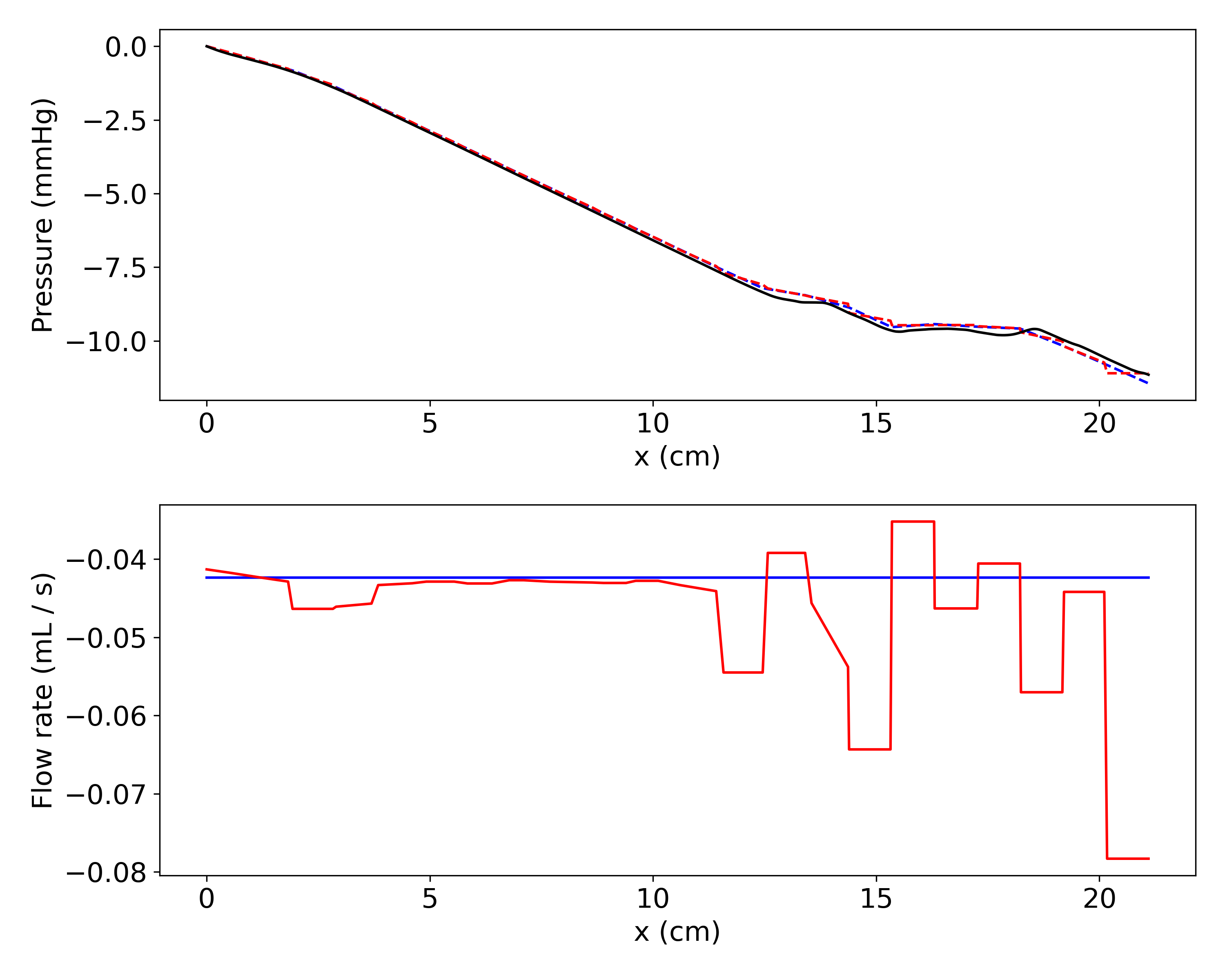}
\end{center}
\small\textbf{(d) Left vertebral artery (70)}
\end{minipage}
\begin{minipage}[c]{0.45\textwidth}
\begin{center}
\includegraphics[width=0.9\textwidth]{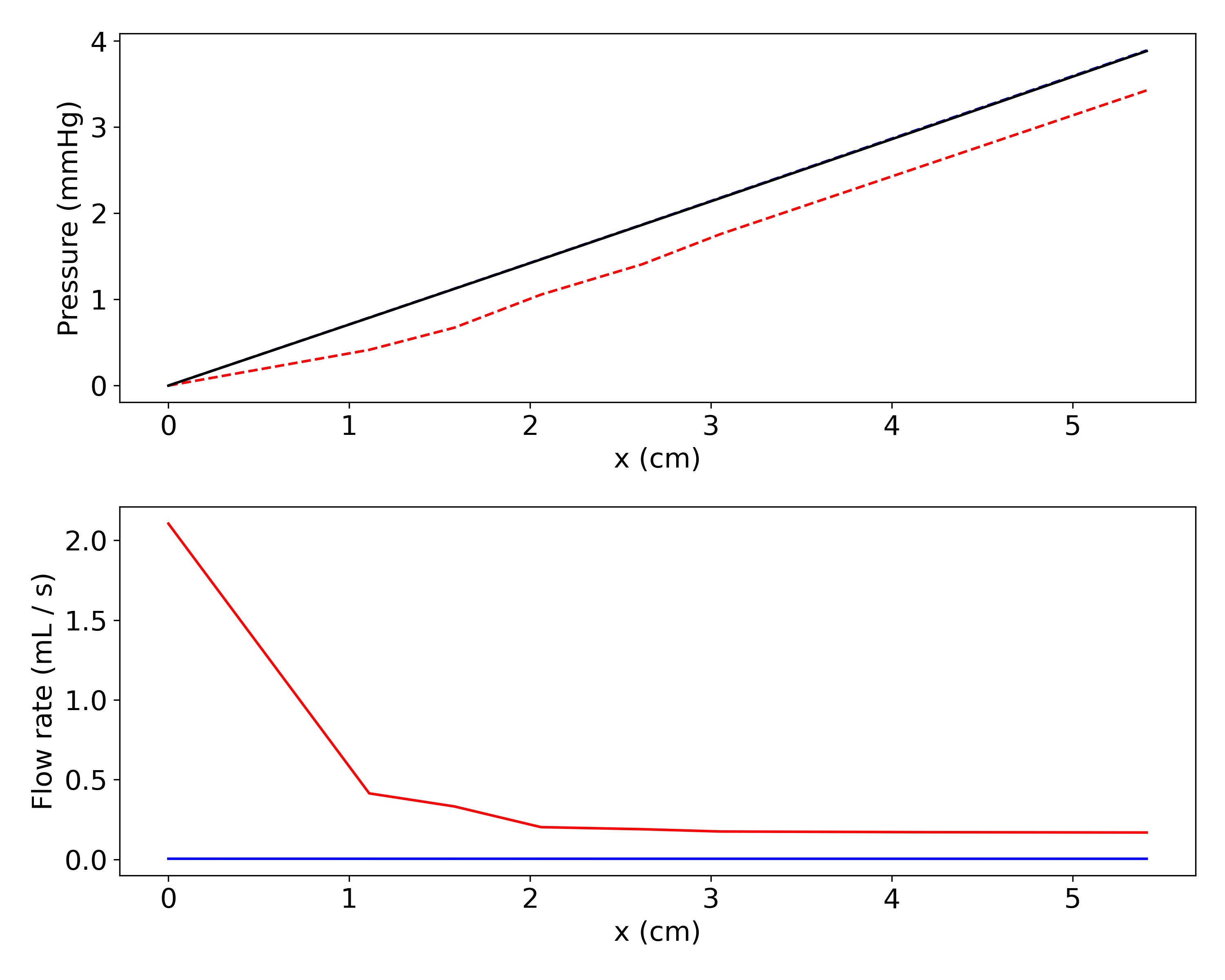}
\end{center}
\small\textbf{(e) Abdominal aorta (96)}
\end{minipage}
\begin{minipage}[c]{0.45\textwidth}
\begin{center}
\includegraphics[width=0.9\textwidth]{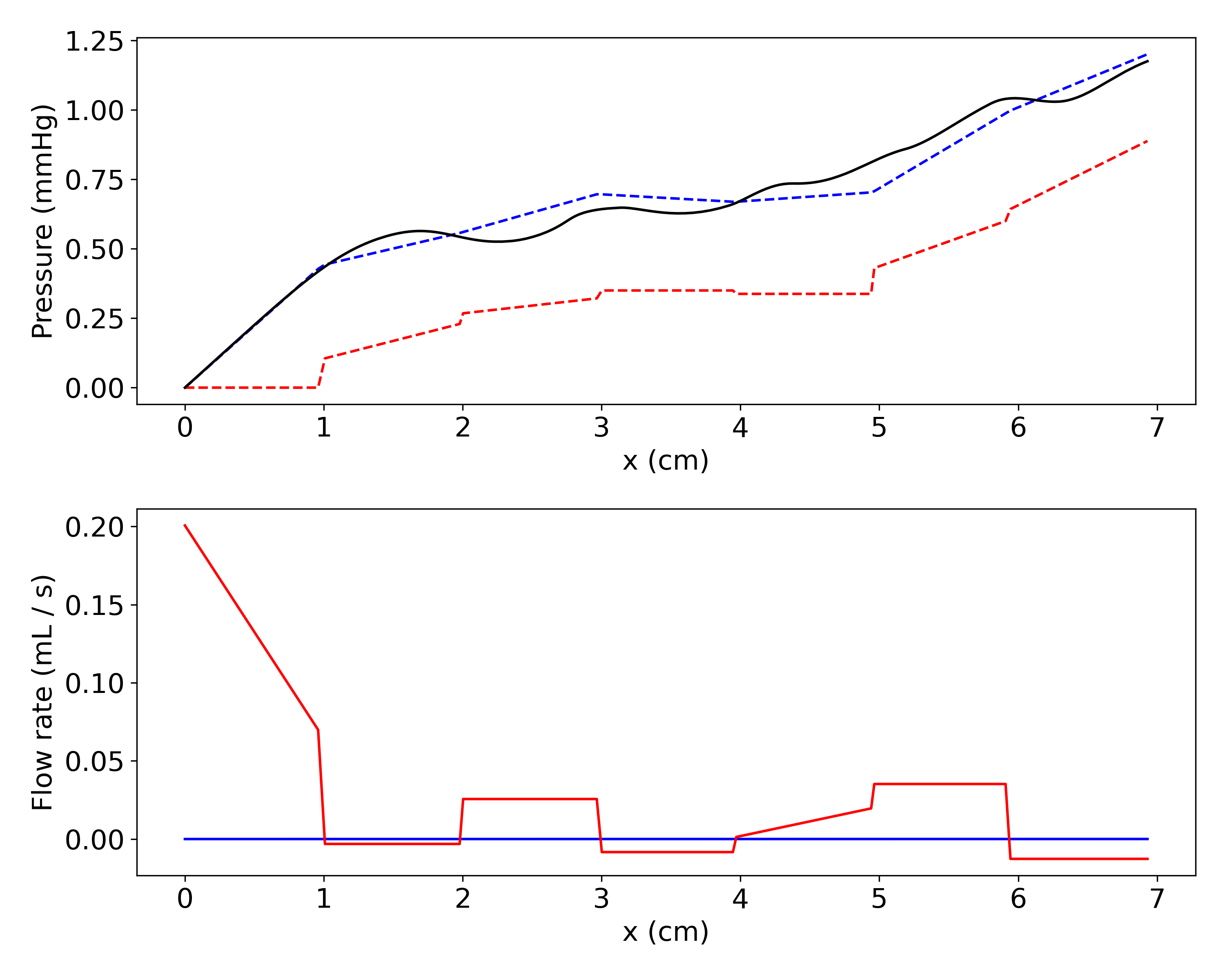}
\end{center}
\small\textbf{(f) Common hepatic artery (118)}
\end{minipage}
\caption{ADAN86 model. Deadman test with gravity. Pressure and flow rate. Pressure is shifted by initial pressure value. Hyd. pressure corresponds to hydrostatic pressure computed from the vessel inlet. Flow rate in this test is expected to be zero everywhere. Numbers in brackets represent the vessel number in Table \ref{tab:adannetwork}.}
\label{fig:deadmanxy}
\end{figure}

\paragraph{Physiological simulation in supine and upright positions}

We proceed with a test in which we show how physiologically realistic flows can be computed with the presented scheme. In particular, we perform a simulation with the second order WB version of our numerical method, in which a flow rate curve is prescribed at the inlet of the aorta (see Figure \ref{fig:adan86inflow}), while lumped-parameter models are used at terminal vessels. We compute results for two cases: a zero-gravity condition and a full-gravity one. In this last case, we set the gravitational field to have a reference acceleration equal to $\bar{g}=981 cm / s^2$, which is then projected along vessels according to their local orientation. Importantly, we set distal venous pressure $P_\mathrm{ven}=0$ for the zero-gravity case, while when gravity is acting on the network, it is computed as
\begin{equation}
P_\mathrm{ven} = \rho \, \bar{g} \, \Delta h\;,
\end{equation}
where $\Delta h$ is the height difference between the end point of the vessel connected to the lumped-parameter model and the root of the aorta. In this way we mimic the fact that in the upright position there is a hydrostatic component acting also on the venous compartment. Clearly, $P_\mathrm{ven}$ will be different for each terminal vessel, with positive values for vessels located below the aortic root and negative ones when vessels are found above it.

Figure \ref{fig:adan86pulsatile} shows computational results in terms of pressure and flow rate for the zero- and full-gravity setups. Results are shown in the form of time dependent curves, sampled at the midpoint of each vessel during the last simulation's cardiac cycle, between 9 and 10 seconds, for which a periodic state was reached. We can observe that zero-gravity results, corresponding to the supine position, show typical signatures for pressure and flow rate. In particular, the pressure waveform becomes steeper as we move away from the heart and its amplitude increases. The cardiac cycle-averaged pressure values decrease as we move distally, being this decrease the one that guarantees flow towards the periphery. Flow rate curves show also physiologically sound features, with a highly systolic aortic flow, which decreases as we move distally in terms of cardiac-cycle averaged value and amplitude, since blood is diverted to different parts of the body as we move towards the periphery. 

The highest impact of gravity on arterial haemodynamics is found in pressure waveforms. In fact, in this case pressure does not necessarily decrease as one moves distally, since there is also a gravity component driving flow. We can observe how, for the selected vessels, which are all ordered from highest to lowest in terms of elevation in the upright position, blood pressure actually increases as we move distally. Interestingly, pressure at the root of the aorta is lower in the full-gravity scenario compared to the zero-gravity one. This can be explained by the fact that the hydrostatic component in this case is negative, since this vessel is located above the aortic root. Another remarkable aspect is that since most arteries are subjected to considerably higher pressure values, the amplitude of pressure waveforms is larger. This is certainly true for vessels located at the lowest quotes, such as the femoral artery or the posterior tibial artery. The larger amplitude can be explained by the fact that the adopted tube law is nonlinear, with increasingly stiffer behaviour for larger pressure values. Compared to the impact of gravity on pressure waveforms, flow is considerably less affected in terms of waveform signatures and average values. This last aspect, i.e. that gravity does not affect blood flow distribution significantly, is consistent with the fact that we are not altering peripheral resistances of lumped-parameter models, and that we accurately account for the fact that venous beds are also subjected to a hydrostatic pressure component in the upright position.

We conclude this section by noting that the full-gravity scenario presented here represents a simplified version of the effect of orthostatic stress on the cardiovascular system. In a real human being, orthostatic stress would trigger a control mechanism called baroreflex that ensures that arterial pressure at the level of carotid arteries and aorta remains within narrow ranges. In order to do so, the central nervous system would act on properties of the cardiovascular system, such as heart rate and contractility, venous tone and peripheral vasoconstriction \cite{Herring:2018b}. While the goal of this test in the context of our work was to show that the proposed numerical methods are able to deal with realistic haemodynamic conditions, much more is needed to obtain fully physiologically sound simulations of the orthostatic stress' impact on the cardiovascular system. See \cite{foisCardiovascularResponsePosture2022b} for the only available computational study including one-dimensional blood flow models, control mechanisms and the orthostatic stress.

\begin{figure}
\centering
\includegraphics[width=0.4\textwidth]{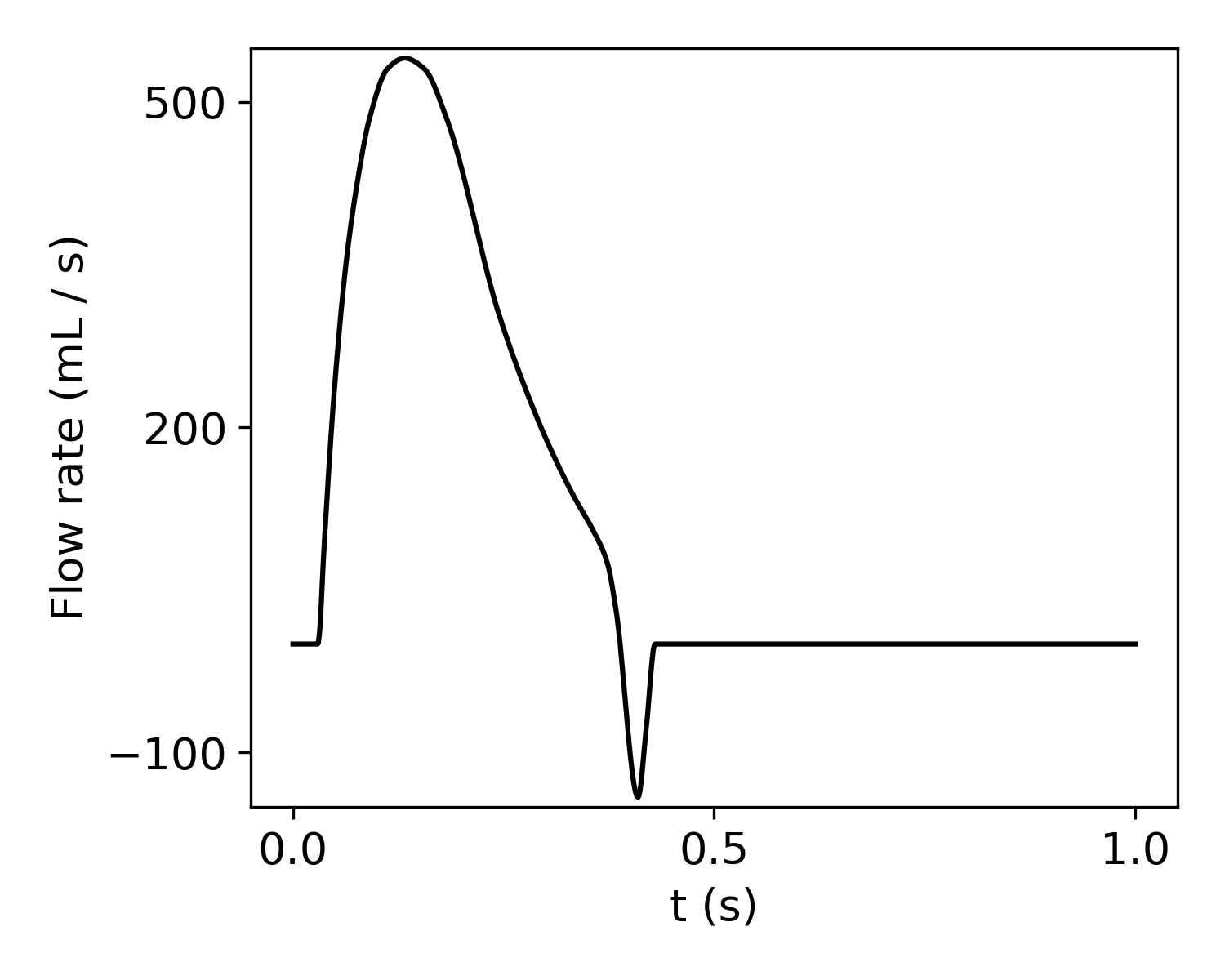} 
\caption{ADAN86 model. Supine and upright position pulsatile simulations. Prescribed flow rate curve at the aortic root.}
\label{fig:adan86inflow}
\end{figure}

\begin{figure}
\centering
\includegraphics[width=0.9\textwidth]{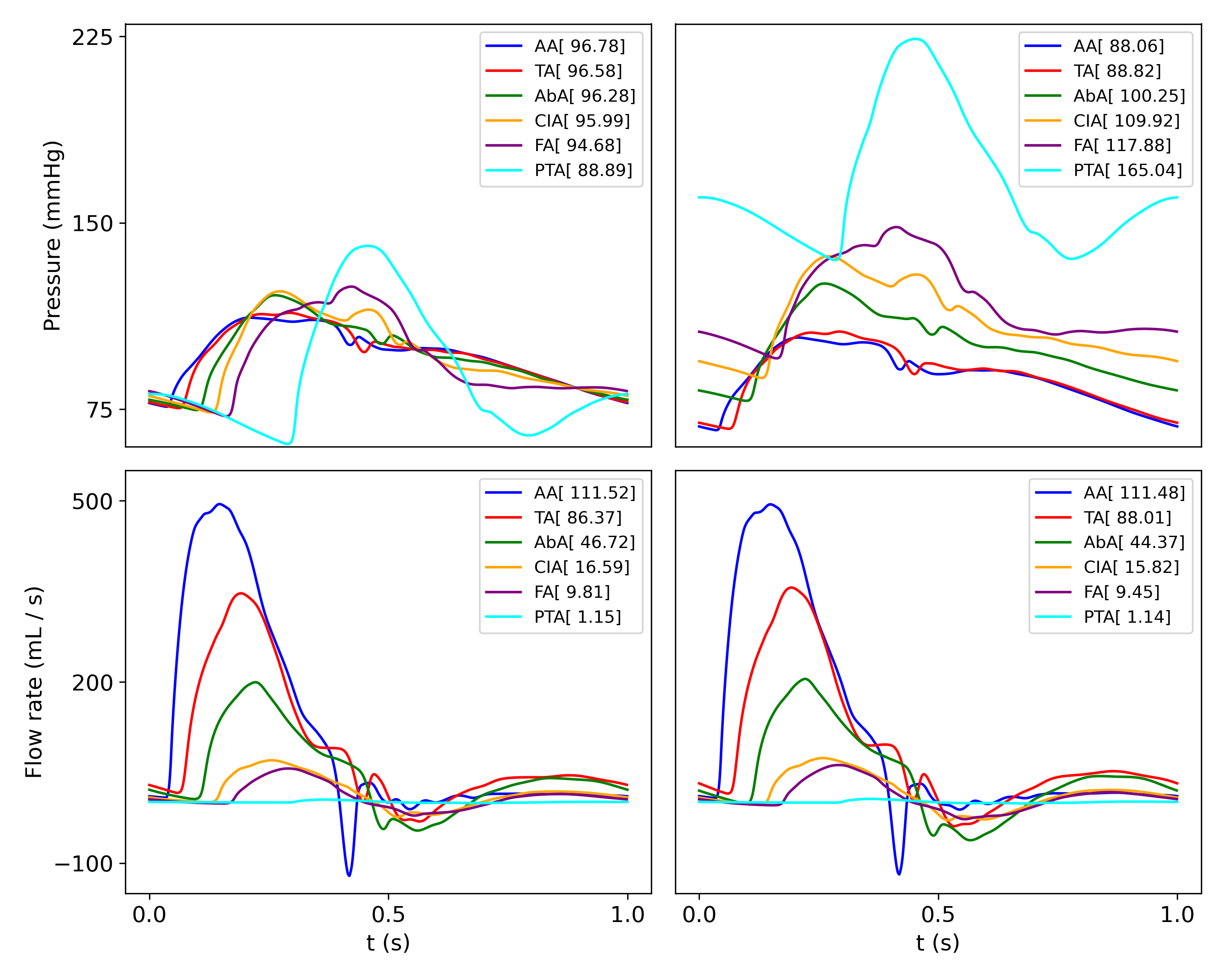} 
\caption{ADAN86 model. Supine and upright position pulsatile simulations. Pressure (top row) and flow rate (bottom row) sampled at the midpoint of selected vessels. The left column corresponds to the zero-gravity case while the right column shows results for the full-gravity setup. Numbers in square brackets represent cardiac cycle-averaged quantities. AA: ascending aorta (98); TA: thoracic aorta (104); AbA: abdominal aorta (95); CIA: common femoral artery (71); FA: femoral artery (35); PTA: posterior tibial artery (38). Numbers in brackets correspond to vessel number reported in Table \ref{tab:adannetwork}.}
\label{fig:adan86pulsatile}
\end{figure}

}

\section{Conclusions} \label{section--conclusions}
 
{\blue In this work we addressed the construction of well-balanced numerical schemes for blood flow in vessels with discontinuous properties and algebraic source terms. Since steady state solutions were not available in explicit or implicit form, we used numerically constructed ones, and designed schemes that preserve such solutions accurately. The proposed methodology was then extended in order to model blood flow in networks of vessels, requiring that coupling and boundary conditions are properly specified. We assessed the capacity of the presented methods to preserve steady state solutions, as well as to compute transient ones. Notably, we did so with academic tests, which allow for a detailed and control testing environment, as well as with realistic conditions. This second type of tests allowed us to assess the impact of well-balancing on real-life scenarios, and to explore the capacity of the proposed methods to cope with haemodynamic conditions taking place in such situations. Future work will regard the incorporation of viscoelastic vessel wall mechanics and the exploration of implicit-explicit time integration strategies. The first topic is motivated by the need for a tube law model that better reflects the mechanical properties of blood vessels, while the second one is a requirement dictated by the highly heterogeneous character of spatial scales involved in complex networks of blood vessels.}

\section*{Acknowledgments}
Ernesto Pimentel-Garc\'ia and Carlos Par\'es have been supported by project PID2022-137637NB-C21 funded by MCIN/AEI/10.13039/501100011033/
and ERDF A way of making Europe. Ernesto Pimentel-Garc\'ia was also financed by the European Union (NextGenerationEU).
Lucas O. Müller is a member of the ”Gruppo Nazionale per il Calcolo Scientifico dell’ Istituto Nazionale di Alta Matematica” (INdAM-GNCS, Italy). He also acknowledges funding by the European Union under NextGenerationEU, Mission 4, Component 2 - PRIN 2022 (D.D. 104/22), project title: Immersed methods for multIscale and multiphysics problems, CUP: E53D2300592
0006.

\bibliographystyle{abbrv}
\bibliography{references}

\appendix

\section{Appendix A}

   {\tiny
  	\begin{longtable}{|c|c|c|c|c|c|c|c|}

  		\hline 
  	Name & Inlet node	&	Outlet node	&	$L$	[$cm$] &	$R_0$ [$cm$] &	$R^\mathrm{prox}$ [$dyn \; s / cm^5$]&	$R^\mathrm{dist}$	[$dyn \; s / cm^5$] &	$C$	[$cm^5 / dyn$]\\
  	\hline
T\_hepatic\_artery\_proper\_left\_branch\_UBC00\_T7\_C\_0	&	0	&	1	&	16.42	&	0.12	&	8520.85	&	34083.39	&	6.03E-06	\\
T\_hepatic\_artery\_proper\_right\_branch\_UBC00\_T6\_C\_1	&	0	&	2	&	8.01	&	0.14	&	4714.38	&	18857.54	&	1.09E-05	\\
A\_hepatic\_artery\_proper\_C\_2	&	3	&	0	&	1.68	&	0.18	&	-	&	-	&	-	\\
T\_dorsal\_pancreatic\_C\_UBC00\_T1\_C\_3	&	4	&	5	&	3.35	&	0.06	&	166313.37	&	665253.50	&	1.51E-06	\\
T\_superior\_segmental\_R\_UBC00\_T4\_R\_4	&	6	&	7	&	2.97	&	0.19	&	5278.23	&	21112.93	&	9.73E-06	\\
A\_renal\_anterior\_branch\_R\_5	&	6	&	8	&	1.09	&	0.25	&	-	&	-	&	-	\\
T\_renal\_posterior\_branch\_R\_UBC00\_T3\_R\_6	&	8	&	9	&	2.24	&	0.16	&	9344.31	&	37377.24	&	5.49E-06	\\
T\_inferior\_segmental\_R\_UBC00\_T5\_R\_7	&	6	&	10	&	4.09	&	0.19	&	5278.23	&	21112.93	&	9.73E-06	\\
A\_renal\_anterior\_branch\_L\_8	&	11	&	12	&	1.09	&	0.25	&	-	&	-	&	-	\\
T\_renal\_posterior\_branch\_L\_UBC00\_T3\_L\_9	&	12	&	13	&	2.24	&	0.16	&	9313.04	&	37252.17	&	5.51E-06	\\
T\_inferior\_segmental\_L\_UBC00\_T5\_L\_10	&	11	&	14	&	4.09	&	0.19	&	5260.57	&	21042.28	&	9.76E-06	\\
T\_superior\_segmental\_L\_UBC00\_T4\_L\_11	&	11	&	15	&	2.97	&	0.19	&	5260.57	&	21042.28	&	9.76E-06	\\
T\_jejunal\_6\_C\_UBC00\_T11\_C\_12	&	16	&	17	&	6.39	&	0.16	&	19695.21	&	78780.84	&	2.61E-06	\\
T\_ileal\_4\_C\_UBC00\_T12\_C\_13	&	18	&	19	&	4.57	&	0.18	&	13296.36	&	53185.44	&	3.86E-06	\\
T\_ileal\_6\_C\_UBC00\_T13\_C\_14	&	20	&	21	&	2.92	&	0.18	&	13296.36	&	53185.44	&	3.86E-06	\\
T\_jejunal\_3\_C\_UBC00\_T10\_C\_15	&	22	&	23	&	4.77	&	0.16	&	19695.21	&	78780.84	&	2.61E-06	\\
T\_middle\_colic\_C\_UBC00\_T8\_C\_16	&	24	&	25	&	11.63	&	0.14	&	26865.03	&	107460.10	&	1.91E-06	\\
T\_ileocolic\_C\_UBC00\_T9\_C\_17	&	26	&	27	&	4.68	&	0.20	&	9717.22	&	38868.86	&	5.28E-06	\\
T\_anterior\_cerebral\_R\_UAR00\_T1\_R\_18	&	28	&	29	&	1.07	&	0.10	&	-	&	-	&	-	\\
T\_anterior\_cerebral\_R\_UAR00\_T1\_R\_19	&	29	&	30	&	2.93	&	0.10	&	12196.03	&	48784.12	&	2.79E-06	\\
A\_posterior\_communicating\_R\_20	&	31	&	32	&	1.67	&	0.05	&	-	&	-	&	-	\\
T\_middle\_cerebral\_R\_UAR00\_T3\_R\_21	&	28	&	33	&	3.03	&	0.10	&	10714.11	&	42856.44	&	3.40E-06	\\
A\_anterior\_communicating\_C\_22	&	29	&	34	&	0.57	&	0.07	&	-	&	-	&	-	\\
A\_posterior\_cerebral\_precommunicating\_part\_R\_23	&	35	&	32	&	0.79	&	0.08	&	-	&	-	&	-	\\
T\_posterior\_cerebral\_postcommunicating\_R\_UAR00\_T4\_R\_24	&	32	&	36	&	1.28	&	0.09	&	26297.04	&	105188.16	&	1.26E-06	\\
A\_basilar\_C\_25	&	35	&	37	&	2.27	&	0.17	&	-	&	-	&	-	\\
T\_anterior\_cerebral\_L\_UAR00\_T1\_L\_26	&	38	&	34	&	1.07	&	0.10	&	-	&	-	&	-	\\
T\_anterior\_cerebral\_L\_UAR00\_T1\_L\_27	&	34	&	39	&	2.93	&	0.10	&	12196.03	&	48784.12	&	2.79E-06	\\
A\_posterior\_communicating\_L\_28	&	40	&	41	&	1.67	&	0.05	&	-	&	-	&	-	\\
T\_middle\_cerebral\_L\_UAR00\_T3\_L\_29	&	38	&	42	&	3.03	&	0.10	&	10714.11	&	42856.44	&	3.40E-06	\\
A\_posterior\_cerebral\_precommunicating\_part\_L\_30	&	35	&	41	&	0.71	&	0.08	&	-	&	-	&	-	\\
T\_posterior\_cerebral\_postcommunicating\_L\_UAR00\_T4\_L\_31	&	41	&	43	&	1.28	&	0.09	&	26297.04	&	105188.16	&	1.26E-06	\\
A\_popliteal\_L\_32	&	44	&	45	&	13.21	&	0.27	&	-	&	-	&	-	\\
A\_popliteal\_L\_33	&	45	&	46	&	0.88	&	0.24	&	-	&	-	&	-	\\
A\_femoral\_L\_34	&	47	&	48	&	3.16	&	0.32	&	-	&	-	&	-	\\
A\_femoral\_L\_35	&	48	&	44	&	31.93	&	0.31	&	-	&	-	&	-	\\
T\_profunda\_femoris\_L\_UDL00\_T2\_L\_36	&	48	&	49	&	23.84	&	0.21	&	3107.78	&	12431.10	&	1.65E-05	\\
T\_posterior\_tibial\_L\_UDL00\_T4\_L\_37	&	50	&	51	&	38.30	&	0.12	&	19184.48	&	76737.94	&	2.68E-06	\\
A\_tibiofibular\_trunk\_L\_38	&	46	&	50	&	3.62	&	0.23	&	-	&	-	&	-	\\
T\_anterior\_tibial\_L\_UDL00\_T3\_L\_39	&	45	&	52	&	38.64	&	0.12	&	22236.54	&	88946.18	&	2.31E-06	\\
A\_popliteal\_R\_40	&	53	&	54	&	13.21	&	0.27	&	-	&	-	&	-	\\
A\_popliteal\_R\_41	&	54	&	55	&	0.88	&	0.24	&	-	&	-	&	-	\\
T\_profunda\_femoris\_R\_UDR00\_T2\_R\_42	&	56	&	57	&	23.84	&	0.21	&	3106.12	&	12424.49	&	1.65E-05	\\
A\_femoral\_R\_43	&	58	&	56	&	3.16	&	0.32	&	-	&	-	&	-	\\
A\_femoral\_R\_44	&	56	&	53	&	31.93	&	0.31	&	-	&	-	&	-	\\
T\_posterior\_tibial\_R\_UDR00\_T4\_R\_45	&	59	&	60	&	38.30	&	0.12	&	19176.12	&	76704.50	&	2.68E-06	\\
A\_tibiofibular\_trunk\_R\_46	&	55	&	59	&	3.62	&	0.23	&	-	&	-	&	-	\\
T\_anterior\_tibial\_R\_UDR00\_T3\_R\_47	&	54	&	61	&	38.64	&	0.12	&	22229.57	&	88918.28	&	2.31E-06	\\
A\_brachial\_L\_48	&	62	&	63	&	22.31	&	0.21	&	-	&	-	&	-	\\
A\_axillary\_L\_49	&	64	&	62	&	12.00	&	0.23	&	-	&	-	&	-	\\
T\_posterior\_interosseuous\_L\_UCL00\_T3\_L\_50	&	65	&	66	&	23.17	&	0.07	&	43494.79	&	173979.17	&	1.18E-06	\\
T\_ulnar\_L\_UCL00\_T2\_L\_51	&	63	&	67	&	2.98	&	0.14	&	-	&	-	&	-	\\
T\_ulnar\_L\_UCL00\_T2\_L\_52	&	67	&	68	&	23.93	&	0.14	&	10855.29	&	43421.17	&	4.73E-06	\\
T\_radial\_L\_UCL00\_T1\_L\_53	&	63	&	69	&	30.22	&	0.14	&	10271.50	&	41085.98	&	5.00E-06	\\
A\_common\_interosseous\_L\_54	&	67	&	65	&	1.63	&	0.10	&	-	&	-	&	-	\\
A\_brachial\_R\_55	&	70	&	71	&	22.31	&	0.21	&	-	&	-	&	-	\\
A\_axillary\_R\_56	&	72	&	70	&	12.00	&	0.23	&	-	&	-	&	-	\\
T\_posterior\_interosseuous\_R\_UCR00\_T3\_R\_57	&	73	&	74	&	23.17	&	0.07	&	43337.80	&	173351.20	&	1.18E-06	\\
T\_ulnar\_R\_UCR00\_T2\_R\_58	&	71	&	75	&	2.98	&	0.14	&	-	&	-	&	-	\\
T\_ulnar\_R\_UCR00\_T2\_R\_59	&	75	&	76	&	23.93	&	0.14	&	10648.96	&	42595.86	&	4.82E-06	\\
T\_radial\_R\_UCR00\_T1\_R\_60	&	71	&	77	&	30.22	&	0.14	&	10458.77	&	41835.10	&	4.91E-06	\\
A\_common\_interosseous\_R\_61	&	75	&	73	&	1.63	&	0.10	&	-	&	-	&	-	\\
A\_common\_carotida\_L\_62	&	78	&	79	&	12.14	&	0.33	&	-	&	-	&	-	\\
A\_common\_carotida\_R\_63	&	80	&	81	&	8.13	&	0.33	&	-	&	-	&	-	\\
A\_vertebral\_L\_64	&	82	&	37	&	21.08	&	0.13	&	-	&	-	&	-	\\
A\_internal\_carotid\_L\_65	&	38	&	40	&	0.20	&	0.13	&	-	&	-	&	-	\\
A\_internal\_carotid\_L\_66	&	40	&	78	&	13.51	&	0.13	&	-	&	-	&	-	\\
A\_internal\_carotid\_R\_67	&	28	&	31	&	0.20	&	0.13	&	-	&	-	&	-	\\
A\_internal\_carotid\_R\_68	&	31	&	80	&	13.51	&	0.13	&	-	&	-	&	-	\\
A\_vertebral\_R\_69	&	83	&	37	&	21.10	&	0.13	&	-	&	-	&	-	\\
A\_common\_iliac\_L\_70	&	84	&	85	&	7.41	&	0.45	&	-	&	-	&	-	\\
T\_internal\_iliac\_L\_UDL00\_T1\_L\_71	&	85	&	86	&	7.25	&	0.28	&	3768.79	&	15075.17	&	1.36E-05	\\
A\_external\_iliac\_L\_72	&	85	&	47	&	10.24	&	0.34	&	-	&	-	&	-	\\
A\_common\_iliac\_R\_73	&	84	&	87	&	7.64	&	0.45	&	-	&	-	&	-	\\
T\_internal\_iliac\_R\_UDR00\_T1\_R\_74	&	87	&	88	&	7.25	&	0.28	&	3757.54	&	15030.17	&	1.37E-05	\\
A\_external\_iliac\_R\_75	&	87	&	58	&	10.24	&	0.34	&	-	&	-	&	-	\\
T\_external\_carotid\_L\_UAL00\_T2\_L\_76	&	78	&	89	&	6.10	&	0.23	&	8541.83	&	34167.32	&	6.01E-06	\\
T\_external\_carotid\_R\_UAR00\_T2\_R\_77	&	80	&	90	&	6.10	&	0.23	&	8511.83	&	34047.31	&	6.03E-06	\\
T\_inferior\_mesenteric\_C\_UBC00\_T5\_C\_78	&	91	&	92	&	9.03	&	0.21	&	21674.64	&	86698.57	&	2.37E-06	\\
T\_superior\_mesenteric\_C\_UBC00\_T4\_C\_79	&	93	&	24	&	4.95	&	0.39	&	-	&	-	&	-	\\
T\_superior\_mesenteric\_C\_UBC00\_T4\_C\_80	&	24	&	23	&	3.52	&	0.35	&	-	&	-	&	-	\\
T\_superior\_mesenteric\_C\_UBC00\_T4\_C\_81	&	23	&	17	&	3.22	&	0.32	&	-	&	-	&	-	\\
T\_superior\_mesenteric\_C\_UBC00\_T4\_C\_82	&	17	&	27	&	1.67	&	0.29	&	-	&	-	&	-	\\
T\_superior\_mesenteric\_C\_UBC00\_T4\_C\_83	&	27	&	19	&	2.31	&	0.28	&	-	&	-	&	-	\\
T\_superior\_mesenteric\_C\_UBC00\_T4\_C\_84	&	19	&	21	&	2.05	&	0.26	&	-	&	-	&	-	\\
T\_superior\_mesenteric\_C\_UBC00\_T4\_C\_85	&	21	&	94	&	3.95	&	0.24	&	8807.45	&	35229.78	&	5.83E-06	\\
A\_renal\_L\_86	&	95	&	12	&	2.20	&	0.28	&	-	&	-	&	-	\\
A\_renal\_R\_87	&	96	&	8	&	3.77	&	0.31	&	-	&	-	&	-	\\
T\_splenic\_C\_UBC00\_T2\_C\_88	&	97	&	98	&	0.40	&	0.22	&	-	&	-	&	-	\\
T\_splenic\_C\_UBC00\_T2\_C\_89	&	98	&	4	&	0.28	&	0.22	&	-	&	-	&	-	\\
T\_splenic\_C\_UBC00\_T2\_C\_90	&	4	&	99	&	6.33	&	0.22	&	4290.28	&	17161.11	&	1.20E-05	\\
A\_abdominal\_aorta\_C\_91	&	100	&	101	&	0.32	&	0.75	&	-	&	-	&	-	\\
A\_abdominal\_aorta\_C\_92	&	101	&	93	&	1.40	&	0.75	&	-	&	-	&	-	\\
A\_abdominal\_aorta\_C\_93	&	93	&	95	&	0.43	&	0.73	&	-	&	-	&	-	\\
A\_abdominal\_aorta\_C\_94	&	95	&	96	&	1.20	&	0.73	&	-	&	-	&	-	\\
A\_abdominal\_aorta\_C\_95	&	96	&	91	&	5.41	&	0.71	&	-	&	-	&	-	\\
A\_abdominal\_aorta\_C\_96	&	91	&	84	&	4.22	&	0.64	&	-	&	-	&	-	\\
A\_aortic\_arch\_C\_97	&	102	&	103	&	7.45	&	1.60	&	-	&	-	&	-	\\
A\_aortic\_arch\_C\_98	&	103	&	79	&	0.96	&	1.30	&	-	&	-	&	-	\\
A\_aortic\_arch\_C\_99	&	79	&	104	&	0.70	&	1.26	&	-	&	-	&	-	\\
A\_aortic\_arch\_C\_100	&	104	&	105	&	4.32	&	1.23	&	-	&	-	&	-	\\
A\_thoracic\_aorta\_C\_101	&	105	&	106	&	0.99	&	1.06	&	-	&	-	&	-	\\
A\_thoracic\_aorta\_C\_102	&	106	&	107	&	0.79	&	1.04	&	-	&	-	&	-	\\
A\_thoracic\_aorta\_C\_103	&	107	&	108	&	1.56	&	1.02	&	-	&	-	&	-	\\
A\_thoracic\_aorta\_C\_104	&	108	&	109	&	0.53	&	0.99	&	-	&	-	&	-	\\
A\_thoracic\_aorta\_C\_105	&	109	&	100	&	12.16	&	0.98	&	-	&	-	&	-	\\
A\_brachiocephalic\_trunk\_C\_106	&	81	&	103	&	4.74	&	0.62	&	-	&	-	&	-	\\
A\_subclavian\_R\_107	&	81	&	83	&	1.57	&	0.49	&	-	&	-	&	-	\\
A\_subclavian\_R\_108	&	83	&	72	&	4.11	&	0.42	&	-	&	-	&	-	\\
A\_subclavian\_L\_109	&	104	&	82	&	4.95	&	0.49	&	-	&	-	&	-	\\
A\_subclavian\_L\_110	&	82	&	64	&	4.11	&	0.35	&	-	&	-	&	-	\\
A\_celiac\_trunk\_C\_111	&	101	&	97	&	1.69	&	0.33	&	-	&	-	&	-	\\
T\_posterior\_intercostal\_T6\_L\_UBC00\_T1\_L\_112	&	110	&	107	&	17.85	&	0.14	&	231660.66	&	926642.64	&	2.22E-07	\\
T\_posterior\_intercostal\_T7\_L\_UBC00\_T2\_L\_113	&	111	&	109	&	18.55	&	0.16	&	212483.26	&	849933.04	&	2.42E-07	\\
T\_posterior\_intercostal\_T6\_R\_UBC00\_T1\_R\_114	&	112	&	106	&	19.72	&	0.14	&	225808.66	&	903234.64	&	2.27E-07	\\
T\_posterior\_intercostal\_T7\_R\_UBC00\_T2\_R\_115	&	113	&	108	&	20.19	&	0.16	&	210677.94	&	842711.76	&	2.44E-07	\\
T\_left\_gastric\_C\_UBC00\_T3\_C\_116	&	98	&	114	&	9.48	&	0.15	&	311250.72	&	1245002.88	&	1.65E-07	\\
A\_common\_hepatic\_C\_117	&	97	&	3	&	6.93	&	0.27	&	-	&	-	&	-	\\
\hline
 \caption{Network connectivity and vessel geometry for ADAN86 model.} \label{tab:adannetwork}

  	\end{longtable} 
	 }

\end{document}